\def\bS {\mathbb{S}}
\newcommand{\tx}[1]{\mathrm{#1}}
\newcommand{\bs}[1]{\boldsymbol{#1}}
\newcommand{\vd}{\mathrm{d}}
\newcommand{\udr}{\,r\vd r}
\newcommand{\uln}[1]{{\underline{ #1 }}}
\definecolor{deepgreen}{cmyk}{1,0,1,0.5}
\newcommand{\A}{\mathcal{A}}
\newcommand{\E}{\mathcal{E}}
\newcommand{\HH}{\mathcal{H}}
\newcommand{\LL}{\mathcal{L}}
\newcommand{\M}{\mathcal{M}}
\newcommand{\N}{\mathbb{N}}
\newcommand{\R}{\mathbb{R}}
\newcommand{\Sp}{\mathbb{S}}
\newcommand{\Z}{\mathbb{Z}}
\newcommand{\al}{\alpha}
\newcommand{\be}{\beta}
\newcommand{\ga}{\gamma}
\newcommand{\om}{\omega}
\newcommand{\la}{\lambda}
\newcommand{\lam}{\lambda} 
\newcommand{\te}{\theta}
\newcommand{\s}{\sigma}
\newcommand{\si}{\varsigma}
\newcommand{\De}{\Delta}
\newcommand{\La}{\Lambda}
\newcommand{\Lam}{\Lambda}
\newcommand{\p}{\partial}
\newcommand{\na}{\nabla}
\newcommand{\Rmnum}[1]{\expandafter\@slowromancap\romannumeral #1@}
\newcommand{\ti}{\widetilde}
\newcommand{\ba}{\overline}
\newcommand{\U}{\underline}
\newcommand{\ang}[1]{\left\langle{#1}\right\rangle}
\newcommand{\abs}[1]{\left\lvert{#1}\right\rvert}
\newcommand{\ant}[1]{\begin{align*}\begin{split} #1 \end{split}\end{align*}}
\newcommand{\EQ}[1]{\begin{equation}\begin{split} #1 \end{split}\end{equation}}
\newcommand{\pmat}[1]{\begin{pmatrix} #1 \end{pmatrix}}
\newcommand{\Del}[1]{}
\numberwithin{equation}{section}
\newtheorem{thm}{Theorem}[section]
\newtheorem{cor}[thm]{Corollary}
\newtheorem{lem}[thm]{Lemma}
\newtheorem{prop}[thm]{Proposition}
\theoremstyle{remark}
\newtheorem{claim}[thm]{Claim}
\newtheorem{rem}[thm]{Remark}
\newtheorem{defn}[thm]{Definition}
\newcommand{\mand}{{\ \ \text{and} \ \  }}
\newcommand{\mas}{{\ \ \text{as} \ \ }}
\newcommand{\uD}{\operatorname{D}}
\newcommand{\rest}{\!\!\restriction}
\definecolor{green}{rgb}{0,0.8,0} 
\newcommand{\ud}{\mathrm{d}}
\newcommand{\eps}{\epsilon}
\newcommand{\bfd}{{\bf d}}
\newcommand{\hbfd}{\widehat{\bfd}}
\newcommand{\calA}{\mathcal A}
\newcommand{\ula}{\underline{\lambda}}
\begin{document}

\title[Uniqueness of two-bubble wave maps]{Uniqueness of two-bubble wave maps}
\author{Jacek Jendrej}
\author{Andrew Lawrie}

\begin{abstract}
This is the second part of a two-paper series that establishes the uniqueness and regularity of a threshold energy wave map that does not scatter in both time directions. 

Consider the $\Sp^2$-valued equivariant energy critical wave maps equation on $\R^{1+2}$, with equivariance class $k \ge 4$.  It is known that every topologically trivial  wave map with energy less than twice that of the unique $k$-equivariant harmonic map $\bs Q_k$ scatters in both time directions. 
We study maps with precisely the threshold energy $\E =  2 \E(\bs Q_k)$. 

In the first part of the series we gave a refined construction of a threshold wave map that asymptotically decouples into a superposition of two harmonic maps (bubbles), one of which is concentrating in scale.  In this paper, we show that this solution is the \emph{unique} (up to the natural invariances of the equation) two-bubble wave map. Combined with our earlier work~\cite{JL1} we can now give an exact description of every threshold wave map. 

\end{abstract}

\thanks{J.Jendrej was supported by  ANR-18-CE40-0028 project ESSED.  A. Lawrie was supported by NSF grant DMS-1700127 and a Sloan Research Fellowship}

\maketitle

\section{Introduction}

This paper concerns wave maps  from the Minkowski space $\R^{1+2}_{t, x}$ into the two-sphere $\bS^2$, with k-equivariant symmetry. These are formal critical points of the Lagrangian action, 
\EQ{
\calA( \Psi)  = \frac{1}{2} \int_{\R^{1+2}_{t, x}} \Big( {-} \abs{\p_t \Psi(t, x)}^2 + \abs{\na \Psi(t, x)}^2 \Big) \, \ud x  \ud t, 
}
restricted to the class of maps  $\Psi: \R^{1+2}_{t, x} \to \Sp^2 \subset \R^3$ that take the form, 
\ant{
\Psi(t, r, \theta) = (u(t, r), k \te ) \hookrightarrow ( \sin u (t, r)\cos k\theta , \sin u(t, r) \sin  k \theta, \cos u(t, r)) \in \Sp^2 \subset \R^3,
}
for some fixed $k \in \N$. Here $u$ is the colatitude measured from the north pole of the sphere and the metric on $\Sp^2$ is given by $ds^2 = d u^2+ \sin^2 u\,  d \om^2$.  
We note that $(r, \te)$ are polar coordinates on $\R^2$, and $u(t, r)$ is  radially symmetric.

Wave maps are known  as nonlinear $\s$-models in high energy physics literature, see for example,~\cite{MS, GeGr17}. They satisfy a canonical example of a geometric wave equation -- it  simultaneously generalizes the free scalar wave equation to manifold valued maps and the classical harmonic maps equation to Lorentzian domains. The $2d$ case considered here is of particular interest, as the static solutions given by finite energy harmonic maps are amongst the simplest examples of topological solitons;  other examples include kinks in scalar field equations,  vortices in Ginzburg-Landau equations, magnetic monopoles, Skyrmions, and Yang-Mills instantons; see~\cite{MS}. 
Wave maps under $k$-equivariant symmetry possess intriguing features from the point of view of nonlinear dynamics, for example, bubbling harmonic maps, multi-soliton solutions, etc.,  in the relatively simple setting of a geometrically natural scalar semilinear wave equation. 
%
For a more thorough presentation of the physical or geometric content of wave maps, see e.g.,~\cite{MS, ShSt00, GeGr17}.


The Cauchy problem for $k$-equivariant wave maps is given by
\EQ{ \label{eq:wmk}
\p_{t}^2 u -   \p_{r}^2 u  - \frac{1}{r}  \p_r u + k^2  \frac{\sin 2 u}{2r^2} &= 0, \\ 
(u(t_0), \partial_t u(t_0)) &= (u_0, \dot u_0), \quad t_0 \in \R.
}
The conserved energy is 
\EQ{ \label{eq:energy} 
\E( \bs u(t)) := 2 \pi \int_0^\infty \frac{1}{2}  \left((\p_t u)^2  + (\p_r u)^2 + k^2 \frac{\sin^2 u}{r^2} \right) \, r \, \ud r,
}
where we have used bold font to denote the vector 
$
\bs u (t) := (u(t), \p_t u(t)).
$
We will write vectors with two components as  $\bs v = (v, \dot v)$, noting that the notation $\dot v$ will not, in general, refer to a time derivative of $v$ but rather just to the second component of $\bs v$. With this notation~\eqref{eq:wmk} can be rephrased as the Hamiltonian system 
\EQ{ \label{eq:uham} 
\frac{\ud}{\ud t} \bs u(t) =  J \circ \uD \E( \bs u(t)),
}
where 
\EQ{ \label{eq:DE}
J = \pmat{ 0 &1 \\ -1 &0}, \quad \uD \E( \bs u(t))  =  \pmat{ - \De u(t)  + r^{-2} f(u(t)) \\ \p_t u(t) }.
}
Note that above we have introduced the notation, 
\EQ{ \label{eq:f-def} 
f( u):= k^2 \sin (2 u).
}
We remark that both~\eqref{eq:uham}  and~\eqref{eq:energy} are invariant under the scaling
\EQ{ \label{eq:uscale} 
\bs u(t,  \cdot) \mapsto \bs u(t/ \la,  \cdot)_{\la}  = (u(t/ \la,  \cdot/ \la), \la^{-1} \p_t u( t/ \la, \cdot/ \la)), \qquad \lam >0.
}
which makes this problem energy critical.

 It follows from~\eqref{eq:energy} that any regular $k$-equivariant initial data $\bs u_0$  of finite energy  must satisfy $\lim_{r \to 0}u_0( r) = m\pi$ and  $\lim_{r \to \infty}u_0(r)=n\pi$  for some $m,n \in \Z$. Since the smooth wave map flow depends continuously on the initial data these integers are fixed over any time interval $t\in I$ on which the solution is defined. This splits the energy space into disjoint classes indexed by the pair $(m, n)$ and it is natural to consider the Cauchy  problem~\eqref{eq:wmk} within a fixed class. These classes are related to the topological degree of the full map $\Psi(t): \R^2 \to  \Sp^2$. In particular, $k$-equivariant wave maps with $(m, n) = (0, 0)$ correspond to topologically trivial maps $\Psi$, whereas those with $(m, n) = (0, 1)$  are degree $= k$ maps. 
 
 The unique (up to scaling) $k$-equivariant harmonic map is given explicitly by 
\EQ{ \label{eq:Q-def} 
Q(r) := 2 \arctan (r^k),
}
 and we write, $\bs Q := (Q, 0)$. We note that $Q(r)$ has degree $=k$ and  it is a standard fact that $\bs Q$  minimizes the energy amongst all degree $k$ maps (see, e.g.,~\cite{JL1}) and in particular amongst $k$-equivariant maps with $(m, n) = (0, 1)$. It is not hard to show that
$
 \E( \bs Q ) = 4  \pi k .
 $

 In this paper we  consider topologically trivial $k$-equivariant wave maps, i.e., those with data $\bs u_0$ that satisfies $\lim_{r \to 0} u_0(r)  =\lim_{r \to \infty} u_0(r) = 0$. The natural function space in which to consider such solutions in the energy space, which comes with the norm, 
 \EQ{
  \| \bs u_0 \|_{\HH}^2 := \|u_0 \|_{H}^2 + \| \dot u_0 \|_{L^2}^2 := \int_0^\infty  \Big((\p_r u_0(r))^2 +  k^2 \frac{u_0(r)^2}{r^2}  \Big) \, r \, \ud r +  \int_0^\infty \dot u_0(r)^2 \, r \, \ud r .
 }
 Denoting by $\LL_0 := -\De + k^2 r^{-2}$ we remark that the $H$ norm of a smooth function $u_0$ can also expressed as 
$
  \| u_0  \|_H^2 = \ang{ \LL_0 u_0 \mid u_0},
 $
 where $\ang{f \mid g} := (2\pi)^{-1} \ang{ f \mid g}_{L^2( \R^2)}$ is the $L^2$ inner product. We use $\LL_0$ to define spaces of higher regularity, and we let $\HH^2$ denote the norm 
\EQ{
\| \bs u_0 \|_{\HH^2}^2:= \| u_0 \|_{H^2}^2 +   \| \dot u_0 \|_H^2  := \ang{ \LL_0 u_0 \mid \LL_0 u_0}  + \ang{\LL_0 \dot u_0 \mid \dot u_0}.
}
We also require the following weighted norm, 
\EQ{
\| \bs u_0 \|_{\bs \Lam^{-1} \HH} := \| (r \p_r u_0, ( r \p_r + 1) \dot u_0) \|_{\HH} .
}
While $\bs Q \not \in \HH$, this solution to~\eqref{eq:wmk} still plays a significant role in the dynamics of solutions in $\HH$;  for example,  superpositions of two bubbles, i.e.,  $Q(r/ \lam) - Q(r/ \mu)$ for $ \lam \neq \mu$, are elements of $H$.

 \subsection{Sub-threshold theorems and bubbling}
 The regularity theory for energy critical wave maps has been extensively studied; \cite{CTZcpam, CTZduke, STZ92, STZ94, KlaMac93, KlaMac94, KlaMac95, KlaMac97, KlaSel97, KlaSel02, Tat98, Tao1, Tao2, Tat01, Kri04}. Recently, the focus has been on the nonlinear dynamics of solutions with large energy. A remarkable~\emph{sub-threshold theorem} was established in~\cite{ST1, ST2, KS, Tao37}: every wave map with energy less than that of the first nontrivial harmonic map is globally regular on $\R^{1+2}$ and scatters to a constant map. The role of the minimal harmonic map in the formulation of the sub-threshold theoem was first clarified by fundamental work of Struwe~\cite{Struwe}, who showed that the smooth equivariant wave map flow can only develop a singularity by concentrating energy at the tip of a light cone via the bubbling off of at least one non-trivial finite energy harmonic map.  Bubbling wave maps were first constructed in a series of influential works by  Krieger, Schlag, Tataru~\cite{KST}, Rodnianski, Sterbenz~\cite{RS}, and Rapha\"el, Rodnianski~\cite{RR}, with the latter work yielding a stable blow-up regime;  see also the recent work~\cite{KrMiao-Duke}  for stability properties of the solutions from~\cite{KST}, as well as~\cite{JLR1} for a classification of blowup solutions with a given radiation profile, and~\cite{Pil-19} for a construction of a new class of singular solutions that blow up in infinite time. In particular, all of these works demonstrate that blow up by bubbling can occur for maps with energy slightly above the ground-state harmonic map, which shows the sharpness of the sub-threshold theorem. 
 
The sub-threshold theorem can be refined by taking into account the topological degree of the map. Only topologically trivial maps can scatter to a constant map and it was shown in~\cite{CKLS1, LO1} that the correct threshold that ensures scattering is $\E < 2\E( \bs Q)$ (rather than $\E(\bs Q)$). The reasoning behind the number $2 \E(\bs Q)$ is as follows. The topological degree counts (with orientation) the number of times a map `wraps around' $\Sp^2$. If a harmonic map of degree $k$ bubbles off from a wave map $\Psi(t)$,  then, in order for $\Psi(t)$  to be degree zero,  it must also `unwrap' $k$ times away from the bubble. The minimum energy required for each wrapping  is $4 \pi k = \E(\bs Q)$. Thus the energy required for a degree zero map to form a bubble is  $\E \ge  8 \pi k   =  2\E(\bs Q)$. 


 
 \subsection{Main result: uniqueness of two-bubble wave maps} 
 
We consider topologically trivial $k$-equivariant maps with precisely the threshold energy $\E =  2 \E(\bs Q)$. Building on the work~\cite{JJ-AJM} of the first author and our work~\cite{JL1}, we can now give an exact description of every such map.  We show that for equivariance classes $k \ge 4$,  there is a \emph{unique} (up to the natural invariances up the equation) threshold wave map that does not scatter in both time directions. 
 
 Let $\bs u(t) : [T_0, \infty) \to \HH$ be a solution to~\eqref{eq:wmk} with $\E(\bs u) = 2 \E(\bs Q)$. We say $\bs u(t)$ is a \emph{two-bubble in
 forward time} if there exist $\iota \in \{+1, -1\}$
 and continuous functions $\lambda(t), \mu(t) > 0$ such that
 \EQ{ \label{eq:2-bub-def} 
\lim_{t \to \infty} \| ( u(t) - \iota( Q_{\la(t)} -  Q_{\mu(t)}), \p_t u(t))\|_{\HH} = 0, \quad \lambda(t) \ll \mu(t)\text{ as }t \to \infty.
}
A \emph{two-bubble in the backward time direction} is defined similarly. Here $Q_{\nu}$ denotes the scaling 
$
Q_{\nu}(r) := Q( r/ \nu). 
$
 In~\cite{JJ-AJM} the first author constructed a two-bubble in forward time. 
   In~\cite{JL1} we showed  that the solution from~\cite{JJ-AJM} must be global and scattering in backwards time.
    In the companion paper~\cite{JL2-regularity} we gave a refined construction of a two-bubble in forward time, showing that it possesses additional regularity and decay, i.e., it lies in the space $ \HH \cap \HH^2 \cap \bs \Lam^{-1} \HH$.
   In this paper we show that \emph{there is only one $2$-bubble wave map} in each equivariance classes $k \ge 4$. 
   
 
\begin{thm}[Uniqueness of $2$-bubble wave maps]  \label{t:main}  Let $k \ge 4$. There  exists a global-in-time solution $\bs u_c:  \R \to \HH \cap \HH^2 \cap \bs \Lam^{-1} \HH$ of \eqref{eq:wmk}
such that
  \begin{equation}
    \label{eq:2bub}
   \big\|\bs u_c(t) - \big({-}\bs Q + \bs Q_{q_k |t|^{-\frac{2}{k-2}}}\big)\big\|_{\HH}   \to 0   \mas t \to \infty, 
  \end{equation}
  where $q_k > 0$ is an explicit constant depending on $k$ (see~\eqref{eq:q-rho-ga}). 
  
  Moreover, if $\bs u(t) \in \HH$ is any other $2$-bubble in forward time, then there exists $(t_0, \mu_0) \in \R \times (0, \infty)$ such that, 
  \EQ{
  \bs u(t) = \bs u_{c, t_0, \mu_0, \pm}(t) :=  \pm \Big(u_c( t- t_0, r/ \mu_0), \frac{1}{\mu_0} \p_t u_c(t- t_0, r/ \mu_0) \Big), 
  }
  i.e., $\bs u_c(t)$ is \emph{unique} up to sign, time translation, and scale. 

\end{thm}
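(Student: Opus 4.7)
\bigskip

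\noindent\textbf{Proof plan.} The existence of $\bs u_c$ satisfying \eqref{eq:2bub}, together with $\bs u_c \in \HH \cap \HH^2 \cap \bs \Lam^{-1} \HH$, is the content of the companion paper \cite{JL2-regularity}. The substance of Theorem \ref{t:main} is therefore the uniqueness assertion. Fix any two-bubble $\bs u(t)$ in forward time, with associated continuous scales $\lambda(t) \ll \mu(t)$ and sign $\iota$ as in \eqref{eq:2-bub-def}. Our goal is to produce $(t_0, \mu_0)$ and a sign $\pm$ so that $\bs u(t) \equiv \bs u_{c, t_0, \mu_0, \pm}(t)$.

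The first step is to refine the crude modulation in \eqref{eq:2-bub-def} to a sharp one. Using the dynamics of two-bubbles established in \cite{JL1}, I would write $\bs u(t) = \iota(\bs Q_{\mu(t)} - \bs Q_{\lambda(t)}) + \bs g(t)$ with orthogonality conditions fixing $\lambda(t), \mu(t)$ uniquely and giving optimal decay of $\bs g$ in $\HH$. From \cite{JL1} one then extracts modulation ODEs for $\lambda, \mu$ whose leading order is autonomous, and one concludes that $\mu(t) \to \mu_\infty \in (0,\infty)$ and $\lambda(t) / \mu(t) \sim (q_k/\mu_\infty^{(k-2)/2})\, (t - t_\infty)^{-2/(k-2)}$ as $t \to \infty$, for some $t_\infty \in \R$. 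Applying the sign $\iota$, the time translation by $t_\infty$, and the scaling by $\mu_\infty$, I may assume henceforth that $\bs u$ has the \emph{same} leading asymptotics as $\bs u_c$.

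The second step is the differential-inequality argument for the difference $\bs v(t) := \bs u(t) - \bs u_c(t)$. Both $\bs u$ and $\bs u_c$ admit refined modulation decompositions with common leading order, so $\bs v(t) \to 0$ in $\HH$ as $t \to \infty$, and $\bs v$ satisfies a linear equation
\[
\p_t \bs v = J \circ \uD^2 \E(\bs u_c) \bs v + J \circ \bs N(\bs u_c, \bs v),
\]
where $\bs N$ is quadratic in $\bs v$. The linearized operator $\uD^2 \E(-\bs Q + \bs Q_{\lambda_c})$ has two resonant directions, namely $\bs \Lam \bs Q$ and $\bs \Lam \bs Q_{\lambda_c}$, generated by the scaling symmetry acting on each bubble. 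I would project $\bs v$ onto these two directions via coefficients $a_-(t), a_+(t)$ and impose orthogonality of the remainder $\bs w(t)$, so that a coercivity estimate from \cite{JL1} gives $\|\bs w(t)\|_\HH^2 \lesssim \langle \uD^2 \E(\bs u_c) \bs w \mid \bs w\rangle$, up to errors absorbable by the smallness of $\lambda_c(t)/1$. The key is to combine this with a virial-type (or mixed energy-virial) functional $I(t) = \langle A_{\lambda_c(t)} \bs v(t) \mid \bs v(t) \rangle$ for a suitable antisymmetric $A_{\lambda_c(t)}$ adapted to the two scales, and show that
\[
\frac{\ud}{\ud t}\, I(t) \geq \frac{c}{\lambda_c(t)} \| \bs w(t)\|_\HH^2 - C \big(|a_-(t)|^2 + |a_+(t)|^2\big),
\]
while the modulation ODEs for $a_\pm$ force $|a_\pm(t)|$ to be controlled by $\|\bs w(t)\|_\HH$ times a small factor involving $\lambda_c(t)$. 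Integrating and using $\|\bs v(t)\|_\HH \to 0$ yields $\bs w \equiv 0$ on $[T,\infty)$ for $T$ large, and then a modulation rigidity argument forces $a_\pm \equiv 0$ as well.

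The main obstacle is the construction of the virial functional and verifying the differential inequality in the third step. The two bubbles lie at wildly disparate scales $\lambda_c(t) \ll 1$, and any virial weight must simultaneously produce coercivity near the inner scale $\lambda_c$, near the outer scale $1$, and in the intermediate region, while the cross-terms between the two bubbles produce contributions of exactly the same polynomial order $t^{-2/(k-2)}$ as the main ones. The hypothesis $k \geq 4$ enters here: it ensures enough decay of $\bs Q$ at infinity and fast enough separation of scales so that these interaction terms can be absorbed, together with the quadratic nonlinearity $\bs N(\bs u_c, \bs v)$, by the main positive term. Once the differential inequality is in place, the conclusion $\bs u \equiv \bs u_c$ on $[T,\infty)$ and hence globally by uniqueness for \eqref{eq:wmk} follows in a standard way.
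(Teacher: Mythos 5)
Your plan takes a genuinely different route from the paper, and as outlined it has gaps that would require substantial additional work to close.

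The paper does \emph{not} modulate $\bs u$ around the static two-bubble family $\bs Q_{\mu} - \bs Q_\lambda$, nor does it form the difference $\bs v = \bs u - \bs u_c$ and run a Martel-type virial+modulation argument on it. Instead, it views the constructed solution's orbit under time translation and scaling as a two-parameter invariant manifold $(\mu,\s)\mapsto\bs U(\mu,\s)=\bs u_c(\lambda_c^{-1}(\s))_\mu$, modulates the arbitrary two-bubble $\bs u(t)$ directly around this manifold with orthogonality conditions~\eqref{eq:omu}--\eqref{eq:ola}, and then exploits the conservation law $\E(\bs u)=\E(\bs U(\mu,\s))=2\E(\bs Q)$ in a Taylor expansion. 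This yields the pointwise-in-time inequality~\eqref{eq:enexp1}, $0\ge \langle D\E(\bs U)\mid\bs g\rangle + \tfrac12 c_1\|\bs g\|_\HH^2$, which is then contradicted by an almost-monotonicity formula (Proposition~\ref{p:b'}) for the refined instability component $b(t)=\frac{1}{\rho_k\s^{k/2}}\langle D\E(\bs U)\mid\bs g\rangle+\langle\A_0(\mu\s)g\mid\dot g\rangle$. Crucially, the paper states explicitly in Remark~\ref{r:kinks} that it makes \emph{no} use of the dynamical classification of~\cite{JL1}, whereas your Step~1 relies on those dynamics to normalize $\bs u$ against $\bs u_c$. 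The paper acknowledges that a~\cite{JKL1}-style route (classification then contraction-mapping uniqueness in a weighted space) ``should be adaptable'', but does not pursue it.

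Concretely, the steps that are underspecified or would fail as written: (a) Your plan never invokes the conserved energy $\E(\bs u)=2\E(\bs Q)$, which is the decisive lever in the paper; without it, the assertion that the modulation ODEs force $|a_\pm|$ to be controlled by $\|\bs w\|_\HH$ times a small factor is precisely what needs to be proved, and is not clearly true from just the linearized structure (the neutral modes $\Lam Q_{\lambda_c}$, $\Lam Q$ are zero modes of a nonnegative operator, so there is no exponential dichotomy to fall back on). (b) The normalization step requires that, after choosing $(t_\infty,\mu_\infty)$, the error $\bs v(t)=\bs u(t)-\bs u_{c,t_\infty,\mu_\infty}(t)$ tend to zero in $\HH$. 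This demands matching the two solutions to at least the precision of the $o(1)$ terms in the \cite{JL1} asymptotics, and the \cite{JL1} rates come with error terms whose decay may be too slow to conclude $\|\bs v\|_\HH\to 0$; the paper sidesteps this entirely by modulating around the whole orbit $\bs U(\mu,\s)$ rather than pinning down $(t_0,\mu_0)$ in advance. (c) The functional you propose is a quadratic form $I(t)=\langle A_{\lambda_c}\bs v\mid\bs v\rangle$, whereas the quantity the paper controls, $b(t)$, is a sum of a linear (in $\bs g$) projection onto $D\E(\bs U)$ and a quadratic virial correction $\langle\A_0(\mu\s)g\mid\dot g\rangle$; the linear part is what interfaces with the energy inequality, and a purely quadratic $I(t)$ cannot be compared to~\eqref{eq:enexp1}. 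These are not cosmetic differences: the energy-expansion inequality plus the almost-monotonicity of $b$ is what lets the argument close in the paper, and your sketch lacks a substitute mechanism.
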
 

\begin{rem}
We note that~\cite[Theorem 1.6]{JL1} ensures that $\bs u_c(t)$ is global  and scatters freely in backwards time. 
\end{rem} 

 \begin{rem} 
 The solution $\bs u_c(t)$ from Theorem~\ref{t:main} was constructed in~\cite{JJ-AJM}. However, the proof of uniqueness given in Section~\ref{s:unique} requires more detailed information about $\bs u_c(t)$ than what is obtained via the methods in~\cite{JJ-AJM}. This refined construction of $\bs u_c(t)$ is carried out in the companion paper~\cite{JL2-regularity}, and is summarized in Theorem~\ref{t:refined} below. Of course only after Theorem~\ref{t:main}  is proved can we be sure that $\bs u_c(t)$ is the same solution found in~\cite{JJ-AJM}.  We note that the companion paper~\cite{JL2-regularity} contains the proof that $\bs u_c(t) \in \HH\cap \HH^2 \cap \bs \Lam^{-1} \HH$, as well as an expansion of the solution into profiles that decay up to the rate $t^{-\frac{3k-2}{k-2}}$, along with a precise dynamical characterization of the modulation parameters associated to each bubble; see the beginning of Section~\ref{s:refined} for a detailed statement. 
 \end{rem} 
 

This result can be combined with the main theorem in~\cite{JL1} to obtain the following complete classification.

\begin{thm}[Classification of $\E = 2 \E(\bs Q)$ wave maps] \label{t:class}  Fix any equivariance class $k \ge 4$.  Let $\bs u :(T_-, T_+) \to \HH$ be a solution to~\eqref{eq:wmk} such that
\EQ{
\E(\bs u) = 2 \E(\bs Q) = 8\pi k.
}
Then $T_- = -\infty$, $T_+ = +\infty,$ and one the following alternatives holds:
 \begin{itemize}[leftmargin=0.5cm]
\item $\bs u (t)$ scatters freely in both time directions 
\item  $\bs u(t)  = \bs u_{c, t_0, \mu_0, \pm}(t)$, for some $(t_0, \mu_0) \in \R \times (0, \infty)$. This solution  is a two-bubble in forward time and freely scattering in backwards time 
\item  $\bs u(t)  = ( u_{c, t_0, \mu_0, \pm}(-t), - \p_t u_{c, t_0, \mu_0, \pm}(-t))$, for some $(t_0, \mu_0) \in \R \times (0, \infty)$. This solution is a two-bubble in backwards time and  freely scattering  in forwards time and is given by time-reversing the solution from Theorem~\ref{t:main} 
 \end{itemize} 
 \end{thm}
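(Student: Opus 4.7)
The strategy I would adopt is a rigidity/contraction argument in the spirit of the uniqueness proofs for threshold solutions pioneered by Duyckaerts--Kenig--Merle and, for multi-soliton configurations, Martel--Merle and the first author's earlier work on two-bubble problems. Let $\bs u(t)$ be any two-bubble in forward time and let $\bs u_c(t)$ denote the distinguished solution constructed in \cite{JL2-regularity}, which by Theorem~\ref{t:refined} admits an expansion to very high order in $t^{-1}$. The first step is to apply the invariances of \eqref{eq:wmk} (sign, time translation, and scaling) to $\bs u_c$ to produce a one-parameter family $\bs u_{c,t_0,\mu_0,\pm}$; the goal is to choose these parameters so that $\bs u$ coincides with a member of this family. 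Since the two-bubble ansatz forces $\iota$ to be determined and fixes the leading order rate $\lambda(t) \sim q_k |t|^{-2/(k-2)}$ and $\mu(t) \to \text{const}$, one can use the freedoms to match the scales and the time origin of $\bs u$ with those of $\bs u_c$ at leading order.

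Next I would set up the modulation decomposition $\bs u(t) = \pm(\bs Q_{\mu(t)} - \bs Q_{\lambda(t)}) + \bs g(t)$ and compare it to the analogous decomposition of $\bs u_c$, writing $\bs w(t) := \bs u(t) - \bs u_{c,t_0,\mu_0,\pm}(t)$ and extracting the corresponding modulation parameters $(\delta\lambda(t), \delta\mu(t))$ and remainder $\bs h(t)$ for the difference. The definition \eqref{eq:2-bub-def} of a two-bubble ensures $\|\bs w(t)\|_{\HH} \to 0$ as $t \to \infty$, so we are in the regime where $\bs h(t)$ is small and the dynamics are governed, at leading order, by the linearization of the wave maps flow around the two-bubble profile. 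The key analytic input is the coercivity of the linearized energy on the orthogonal complement of the symmetry directions $\bs \Lambda \bs Q_\lambda$ and $\bs \Lambda \bs Q_\mu$, which is available for $k \ge 4$ (the restriction on $k$ is what makes the interaction term between the two bubbles integrable in time and keeps the instability mode controllable).

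The heart of the argument is then the construction of a mixed energy--virial functional $\mathcal{I}(t)$, analogous to the functionals used by Jendrej--Lawrie and Jendrej in the related two-bubble classification works, that controls a norm of $\bs h$ together with the modulation parameters and satisfies, after careful use of the refined information on $\bs u_c$, a differential inequality of the form $|\mathcal I(t)| \lesssim t^{-\alpha} \|\bs h(t)\|_{\HH}^2$ for some $\alpha > 1$. Combined with the bootstrap bounds on $\bs h$ coming from the modulation equations and the decay rate of $\bs u_c - (-\bs Q + \bs Q_{q_k t^{-2/(k-2)}})$ proved in \cite{JL2-regularity}, integrating from $t$ to $\infty$ forces $\|\bs h(t)\|_{\HH} + |\delta\lambda(t)|/\lambda(t) + |\delta\mu(t)|/\mu(t) \equiv 0$ for $t$ sufficiently large, after which uniqueness of the Cauchy problem propagates the equality to all times.

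The main obstacle, and the reason the refined construction in \cite{JL2-regularity} is indispensable, is that a crude two-bubble ansatz only knows $\lambda(t)$ to leading order, which is not enough to close the virial estimate: cross terms between $\bs h$ and the error of the ansatz would be borderline non-integrable. One needs the higher-order profile expansion of $\bs u_c$ up to rate $t^{-(3k-2)/(k-2)}$, together with the precise dynamical law for the modulation parameters, to gain the extra decay that makes the coercivity argument close. Handling the single (scaling-induced) unstable direction of the linearization around $-\bs Q + \bs Q_\lambda$, which cannot be killed by orthogonality alone, will require a separate modulation equation that exploits the freedom in the choice of $\mu_0$, essentially a shooting/fixed-point argument fixing the remaining parameter.
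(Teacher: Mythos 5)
Your proposal addresses the uniqueness statement (Theorem~\ref{t:main}) rather than the full classification Theorem~\ref{t:class}. The latter is obtained in the paper as a corollary of Theorem~\ref{t:main} together with the main results of~\cite{JL1}: globality of all threshold solutions, the fact that a non-scattering threshold solution must be a two-bubble in the relevant time direction with the asymptotics of~\eqref{eq:2-bub-def}, and the resolution of the collision problem (a two-bubble in one direction scatters freely in the other). Your sketch never mentions these reductions, so as a proof of Theorem~\ref{t:class} it is incomplete; you must still appeal to~\cite{JL1} for the trichotomy structure and globality.

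Even restricted to the uniqueness statement, your route is genuinely different from the one in the paper, and the paper itself comments on this distinction in Remark~\ref{r:kinks}. You propose to compare $\bs u(t)$ directly to a candidate $\bs u_{c,t_0,\mu_0,\pm}(t)$, extract modulation parameters and a remainder $\bs h$ for the \emph{difference}, and drive $\bs h$ to zero via a mixed energy--virial differential inequality, fixing $(t_0,\mu_0)$ by a shooting argument to control the one non-coercive direction. This is precisely the flavor of the contraction-mapping/Liapunov--Schmidt strategy the authors used for kink--antikink pairs in~\cite{JKL1}, and the paper explicitly states it chose \emph{not} to pursue this here. Instead, the paper modulates $\bs u(t)$ around the two-parameter invariant manifold $\bs U(\mu,\s) = \bs u_c(\lam_c^{-1}(\s))_\mu$ (not around the raw ansatz $\bs Q_{\mu} - \bs Q_{\lam}$ nor around a fixed representative), and exploits that both $\bs u$ and $\bs U(\mu,\s)$ sit on the level set $\E = 2\E(\bs Q)$. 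Taylor expanding the energy gives the sharp pointwise inequality $0 \ge \ang{D\E(\bs U(\mu,\s))\mid \bs g} + \tfrac12 c_1 \|\bs g\|_\HH^2$, and the contradiction is then extracted from an almost-monotonicity formula (Proposition~\ref{p:b'}) for the single scalar $b(t) = \ang{\bs\be(\mu,\s)\mid\bs g} + \ang{\A_0(\mu\s)g\mid\dot g}$, a refined instability component with an \emph{ad hoc} virial correction to the modulation parameter itself. Notably, this scheme avoids the shooting step entirely: once $\bs g(T_1)=0$ for some $T_1$, the parameters $(t_0,\mu_0)$ are simply read off from $(\mu(T_1),\s(T_1))$, with no fixed-point argument needed. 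Both routes are plausible (and the paper conjectures both are adaptable), but the published proof is the weak-strong-uniqueness style argument, not the comparison/contraction one you outline, and it uses the threshold energy constraint in an essential way that your outline does not.

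One smaller inaccuracy in your sketch: the instability you need to tame is the one generated by $\p_\s\bs U$ (the velocity of the concentrating bubble, which is proportional to $J\circ D\E(\bs U)$), not the overall scaling freedom; the paper's $b(t)$ is exactly a renormalized projection onto this direction, and the scaling/time-translation freedoms are what allow $(\mu_0,t_0)$ to be recovered at the end rather than what must be shot for.
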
  
 
\begin{rem} Several of the conclusions in the statement of Theorem~\ref{t:class} were proved in~\cite[Theorem 1.6]{JL1}.  In that work we showed that any threshold solution that does not scatter  in some direction  \emph{must} be a two-bubble in that direction as in~\eqref{eq:2-bub-def}, and with rates $\lam(t), \mu(t)$ that are to leading order the same as the rates of $\bs u_c(t)$. Additionally, in~\cite{JL1} we solved the so-called \emph{collision problem} for this equation. We showed that any two bubble in forward time must scatter freely in backwards time, i.e., when scales of the bubbles become comparable, this `collision' completely annihilates the $2$-bubble structure and the entire solution becomes free radiation; see also~\cite{MM11, MM11-2, MM18}. Viewing the evolution of $\bs u_c(t)$ in forward time, this means that the $2$-bubble emerges from pure radiation, and constitutes an orbit connecting two different dynamical behaviors. 

\end{rem} 

\begin{rem} Theorem~\ref{t:main} fits in a broader program  to classify solutions to nonlinear wave equations via their linear radiation. Note that the solution $\bs u_c(t)$ emits zero linear radiation as $t \to \infty$. For~\eqref{eq:wmk} the conjecture (soliton resolution) is that the only solutions with this property are the trivial solution $\bs u(t)  \equiv 0$ and pure multi-bubbles such as $\bs u_c(t)$. Theorem~\ref{t:main} says that  $\bs u_c(t)$ is the only solution with two bubbles that emits zero radiation forward-in-time. More generally, one can fix a linear forward radiation profile $\bs u_L(t)$ and ask if there are solutions $\bs u(t)$ to~\eqref{eq:wmk} that asymptotically decouple into a sum of bubbles plus the radiative wave $\bs u_L(t)$, and more ambitiously, for which $\bs u_L(t)$ can these be classified? 

This same type of perspective can be taken in the context of solutions that develop a singularity in finite time via bubbling, see~\cite{JLR1} where a classification is given in terms of a given finite time radiation profile $\bs u^*$, which is a weak limit of the solution as $t \to T_+< \infty$. 
\end{rem}

 \begin{rem} 
 
 We expect identical theorems to hold for the equivariance classes $k=2,3$. In fact, the argument used to prove uniqueness in this paper adapts easily to these cases. However, we only carry out the refined construction in~\cite{JL2-regularity} for $k \ge 4$. The proof given in that paper can be readily adapted to cover the $k=3$ case, but we avoided this due to a technical inconvenience to keep the exposition as simple as possible, see~\cite[Remark 1.4]{JL2-regularity}. The $k=2$ case is more delicate due to the failure $r \p_r Q(r)  \not \in H^*$. This introduces the need for cut-offs in the modulation analysis. This issue was confronted in~\cite{JL1}, but we also avoided it in~\cite{JL2-regularity} to keep the analysis as straightforward  as possible. Finally, the dynamics of non-scattering threshold solutions in the case $k=1$ is different -- there is blow up in finite time; see the recent paper by Rodriguez~\cite{R19}. However, we still expect an analogous uniqueness statement to hold in that setting; see e.g.,~\cite[Conjecture 1.9]{JLR1}. 
 \end{rem}

\begin{rem} 
One can compare/contrast Theorem~\ref{t:class} with the classification of $E = E(W)$ threshold solutions of the focusing energy critical wave equation by Duyckaerts and Merle~\cite{DM08} with data $(u_0, u_1)$  in the subset $(\dot H^1 \cap L^2) \times L^2$ of the energy space; see also~\cite{DM09} for the corresponding theorem for NLS. There $W$ is the ground state Aubin-Talenti solution and it is shown that every threshold solution either scatters in both time directions, exhibits ODE blow up in both directions, is equal to $W$,  or is one of two solutions  $W^{\pm}$; $W^-$ scatters freely in one direction and scatters to $W$ in the other, and $W^+$ exhibits finite time ODE blow up  in one direction and scatters to $W$ in the other.  One main difference here is that the non-scattering threshold solution $\bs u_c(t)$  contains  $2$ bubbles, one of which is concentrating, which significantly complicates the analysis.   
\end{rem} 

\begin{rem}[Strong vs. weak soliton interactions] 
The first uniqueness result for multi-solitons is due to Martel~\cite{Martel05} who constructed and proved uniqueness of $N$-soliton solutions to g-KdV with distinct, nontrivial velocities. We refer to the multi-solitons in that work as \emph{weakly interacting} since the leading order dynamics are given/determined by the internal motion of each individual soliton.   We emphasize here a distinction with Theorem~\ref{t:main}: the bubbles in  $\bs u_c(t)$ are  \emph{strongly interacting} in the sense that the dynamics are driven by nonlinear interactions between the two bubbles. 

\end{rem} 

\begin{rem}[Unique strongly interacting topological solitons] 
One may compare Theorem~\ref{t:main} with the authors' recent work with Kowalczyk, ~\cite{JKL1}, which establishes the existence and \emph{uniqueness} of strongly interacting  kink-antikink solutions to scalar field equations on the line (e.g., sine-Gordon and $\phi^4$-model). While the Theorem~\ref{t:main} and the main result in~\cite{JKL1}  are quite similar in nature (albeit for different equations), we develop a completely different technique in this paper to establish uniqueness. We explain the difference in more detail in Remark~\ref{r:kinks} below. 
\end{rem} 

\begin{rem}[Uniqueness theorems in the blow up setting] Finally, we mention two other uniqueness results for solutions with non-trivial dynamics in the blow-up setting, namely the pioneering work of Merle~\cite{Merle93} which proved the existence and uniqueness  (up to phase) of minimal mass blow up for the mass critical NLS, and the remarkable paper by Rapha\"el and Szeftel~\cite{RaSz11} which proved an analogous result for same equation, but with an  inhomogeneous nonlinearity (which precludes the use of the psuedo-conformal symmetry in the proof).  Several techniques used in this series of papers were inspired by~\cite{RaSz11}, although we emphasize the method we use to prove uniqueness is novel. 
\end{rem} 

\subsection{The existence and regularity of  two bubble wave maps}  \label{s:refined} 
The starting point for the proof of Theorem~\ref{t:main} is the existence of the two-bubble wave map $\bs u_c(t)$ from Theorem~\ref{t:main} with a precise description of its dynamics and regularity. This is the content of the companion paper~\cite{JL2-regularity}. For the reader's convenience we review the main conclusions here. 


We begin by introducing some notation needed to state Theorem~\ref{t:refined} below.  We define, 
\EQ{ \label{eq:q-rho-ga} 
\rho_k &:= \Big( \frac{8k}{\pi}  \sin( \pi/k) \Big)^{\frac{1}{2}} , \quad 
 \gamma_k := \frac{k}{2} \rho_k^2, \quad q_k:=  \Big( \frac{k-2}{2} \rho_k\Big)^{-\frac{2}{k-2}}
 }
 We remark that $\rho_k^2 = 16 k \| \Lam Q \|_{L^2}^{-2}$.  
 Given a radial function $w: \R^2  \to \R$ we denote the $H$ and $L^2$  re-scalings as follows 
\EQ{ \label{eq:scale} 
w_\la(r) := w(r/ \la), \quad
w_{\ula}(r)  := \frac{1}{\la} w (r/ \la)
}
The corresponding infinitesimal generators  are given by 
\EQ{ \label{eq:LaLa0} 
&\La w:= -\frac{\partial}{\partial \lambda}\bigg|_{\lambda = 1} w_\la = r \p_r w  \quad (H  \,  \textrm{scaling}) \\
& \La_0 w:= -\frac{\partial}{\partial \lambda}\bigg|_{\lambda = 1} w_{\ula} = (1 + r \p_r ) w  \quad (L^2  \textrm{scaling})
}
Next, we define $C^{\infty}(0, \infty)$ functions $A, B, \ti B$ as the unique solutions to the equations, 
 \EQ{  \label{eq:ABB-def} 
 \LL A &= - \La_0 \La Q,  \quad 
 0=  \ang{A \mid \La Q} \\ 
 \LL B &= \gamma
 _k  \La Q - 4 r^{k-2} [\La Q]^2,  \quad 
0 =  \ang{B \mid \La Q},  \\
  \LL \ti B &= - \gamma_k \La Q + 4r^{-k-2} [\La Q]^2,  \quad 
  0 =  \ang{ \ti B \mid \La Q} 
  } 
  where here $\LL := -\De + r^{-2} f'(Q)$ is the operator obtained via linearization about $Q$. 
  These are constructed in \cite[Lemma 3.3]{JL2-regularity}, and here we note that 
  \EQ{
  A(r), B(r) &= O( r^k) \mas r \to 0, \quad \ti B(r) = O( r^k \abs{\log r}) \mas r \to 0 \\
  A(r), B(r), \ti B(r) &= O( r^{-k +2}) \mas r \to \infty
  }
  Next, given a time interval $J \subset \R$ and  a quadruplet of $C^1$ functions $(\mu(t), \lam(t), a(t), b(t))$ on $J$ we define  the $2$-bubble ansatz, $$\bs \Phi(\mu(t), \lam(t), a(t), b(t), r) = (\Phi(\mu(t), \lam(t), a(t), b(t), r), \dot \Phi(\mu(t), \lam(t), a(t), b(t), r))$$ by 
  \EQ{ \label{eq:Phi-def1} 
 \Phi(\mu, \lam, a, b)&:= (Q_{\la} + b^2 A_\la + \nu^k B_\la ) - ( Q_\mu + a^2 A_\mu + \nu^k \ti B_\mu) \\
\dot \Phi(\mu, \lam, a, b)&:= b \La Q_{\U \la}  + b^3 \La A_{\U \la}  - 2 \gamma_k b \nu^k A_{\U \la}  + b \nu^k \La B_{\U \la} - k b \nu^k B_{\U \la} - k a \nu^{k+1} B_{\U \lam}  \\
& \quad  +  a \La Q_{\U \mu}+ a^3 \La A_{\U \mu}  + 2 \ti \ga_k a \nu^k A_{ \U\mu} + a \nu^k \La \ti B_{\U \mu} + k b \nu^{k-1} \ti B_{ \U\mu}  + k a \nu^k \ti B_{\U \mu} 
  }
  where we have introduced the notation, 
$
  \nu:= \lam/ \mu.
  $
  To ensure that $\dot \Phi \in L^2$, we now restrict to the setting $k \ge 4$. See~\cite[Remark 1.4]{JL2-regularity} for a discussion of the cases $k =2, 3$. The main result from~\cite{JL2-regularity} is the following theorem.

\begin{thm}[A refined two-bubble construction] \label{t:refined} \emph{\cite{JL2-regularity}} 
Fix any equivariance class $k \ge 4$. There exists a global-in-time solution $\bs u_c(t) \in \HH$ to~\eqref{eq:wmk} that is a \emph{two-bubble in forward time} with the following additional properties: 
\begin{itemize} 
\item The solution $\bs u_c(t)$ lies in the space $\HH \cap \HH^2 \cap \bs \Lam^{-1} \HH$, and scatters freely in negative time. 
\item There exists $T_0>0$, a quadruplet of  $C^1( [T_0, \infty)$ functions $(\mu_c(t), \lam_c(t), a_c(t), b_c(t))$, and $\bs w_c(t)  \in \HH \cap \HH^2 \cap \bs \Lam^{-1} \HH$ so that on the time interval $[T_0, \infty)$ the solution $\bs u_c(t)$ admits a decomposition, 
\EQ{ \label{eq:u_c-def} 
\bs u_c(t)  = \bs \Phi(\mu_c(t), \lam_c(t), a_c(t), b_c(t)) + \bs w_c(t) 
}
where $\bs \Phi$ is defined in~\eqref{eq:Phi-def1} and the functions $(\mu_c(t), \lam_c(t), a_c(t), b_c(t))$ satisfy, 
\EQ{ \label{eq:mod-est} 
\lam_c(t) &= q_k t^{-\frac{2}{k-2}} (1 + O( t^{-\frac{4}{k-2} + \eps}) ) \mas t \to \infty, \\
\mu_c(t) &= 1 -\frac{k}{2(k+2)} q_k^2 t^{-\frac{4}{k-2}}  + O(t^{-\frac{6}{k-2}+ \eps}) \mas t \to \infty , \\
b_c(t) & = q_k  \frac{2}{k-2} t^{- \frac{k}{k-2}}( 1+  O( t^{-\frac{4}{k-2} + \eps}) ) , \mas t \to \infty ,\\  
a_c(t) & =  \frac{2k}{(k-2)(k+2)} q_k^2 t^{-\frac{k+2}{k-2}} ( 1 + O( t^{-\frac{4}{k-2} + \eps}) )  \mas t \to \infty ,
}
where $\eps>0$ is any fixed small constant. We also have, 
\EQ{ \label{eq:mod'} 
\abs{ \lam_c'(t) + b_c(t)}  &\lesssim  t^{-\frac{2}{k-2}(2k-1)}   \mas t \to \infty,  \\
 \abs{\mu_c'(t)  - a_c(t)} & \lesssim  t^{-\frac{2}{k-2}(2k-1)}   \mas t \to \infty .
}
Finally,  $\bs w_c(t)$ satisfies, 
\EQ{ \label{eq:w-est} 
 \| \bs w_c(t) \|_{\HH}^2 & \lesssim \lam_c(t)^{3k-2}, \\
 \| \bs w_c(t) \|_{\HH^2}^2 & \lesssim \lam_c(t)^{3k-4}, \\
 \| \bs \Lam \bs w_c(t)  \|_{\HH}^2 & \lesssim \lam_c(t)^{2k-2} , 
}
uniformly in $t \ge T_0$. 
\end{itemize} 
\end{thm}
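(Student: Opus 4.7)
The plan is to construct $\bs u_c(t)$ by a limiting/compactness argument, controlling the modulation parameters and the error via a bootstrap that simultaneously tracks the nonlinear interaction between the two bubbles and the higher-regularity of the solution. The starting point is the refinement of the zeroth-order ansatz $Q_{\lam} - Q_{\mu}$: substituting this into~\eqref{eq:wmk} produces a residual error driven by scaling-direction time derivatives (proportional to $b^2, a^2$) and by the nonlinear bubble interaction (proportional to $\nu^k$ with $\nu = \lam/\mu$). The corrector profiles $A, B, \widetilde B$ defined in~\eqref{eq:ABB-def} through the linearized operator $\LL$ are chosen precisely to absorb these leading residuals, so that the full ansatz $\bs\Phi$ in~\eqref{eq:Phi-def1} solves~\eqref{eq:wmk} with an error two orders smaller in $\lam$ than the naive one.

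For a sequence $T_n \to \infty$, I would construct backward-in-time approximate solutions $\bs u_n$ on $[T_0, T_n]$ with final data a pure 2-bubble configuration $\bs\Phi$ whose parameters match the leading-order asymptotics~\eqref{eq:mod-est}. Writing $\bs u_n(t) = \bs\Phi(\mu_n, \lam_n, a_n, b_n)(t) + \bs w_n(t)$ and imposing four orthogonality conditions on $\bs w_n$ (one at each bubble with respect to $\Lam Q$, plus a second generalized-kernel direction that fixes $a,b$) uniquely determines the four parameters. Differentiating these conditions in $t$ and substituting from~\eqref{eq:wmk} yields a system of modulation ODEs whose leading-order part is essentially a two-body problem; its explicit solution gives the expansions of $(\lam_c, \mu_c, a_c, b_c)$ together with the near-identities $\lam_c' \approx -b_c$, $\mu_c' \approx a_c$ in~\eqref{eq:mod'}.

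The heart of the argument is a bootstrap on an interval $[t_*, T_n]$: assume the three bounds on $\bs w_n$ in~\eqref{eq:w-est}, then improve them. At the base energy level this is done by constructing a modified energy functional $\calJ(\bs w_n) \approx \|\bs w_n\|_{\HH}^2$ with localized virial corrections that account for the negative and zero directions of the linearized operator near each bubble. Its time derivative is bounded by the modulation-equation errors and by the ansatz defects, both of which are polynomially smaller than the target. At the $\HH^2$ and $\bs\Lam^{-1}\HH$ levels, analogous functionals are built using a supersymmetric factorization of $\LL$ that removes the kernel $\Lam Q$, together with commutator estimates for $\LL_0$ and the scaling vector field $\bs \Lam$. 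Extracting a weak limit of $\bs u_n$ as $T_n \to \infty$ produces $\bs u_c$; global existence in negative time and free backward scattering then follow from Struwe's bubbling theorem combined with the collision result~\cite[Theorem 1.6]{JL1}, which forces any non-scattering threshold solution to relax to radiation once the bubble scales become comparable.

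The principal obstacle is the \emph{strong interaction} of the two bubbles: the leading dynamics of $(\lam_c, \mu_c, a_c, b_c)$ is driven by the inner-product contributions of the $\nu^k$-terms, not by any internal motion of either bubble, so interaction errors are non-perturbative and must be computed to high order. One must therefore track $B, \widetilde B$ to a precision sufficient to extract the \emph{net} pairing against $\Lam Q$ at each bubble, which is why the explicit asymptotics $A, B, \widetilde B = O(r^{-k+2})$ at infinity and $O(r^k)$ (or $O(r^k \log r)$) at zero are essential inputs rather than cosmetic remarks. A secondary difficulty is that the higher-regularity control at the concentrating bubble competes with the scaling weight $\lam_c(t)\to 0$; coercivity and commutator bounds must be carried out at the bubble scale $\lam_c$ and carefully reconciled with the fixed-scale bubble. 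These issues are the reason the refined construction requires the substantial additional analysis carried out in the companion paper~\cite{JL2-regularity}.
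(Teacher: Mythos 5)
Theorem~\ref{t:refined} is not proved in the present paper. It is imported verbatim from the companion paper~\cite{JL2-regularity}, as the theorem attribution, the remark preceding it (``This refined construction of $\bs u_c(t)$ is carried out in the companion paper~\cite{JL2-regularity}, and is summarized in Theorem~\ref{t:refined} below''), and the later comment about ``the lengthy computations in the companion paper'' all make explicit. The present paper uses the theorem as a black box, together with a handful of imported lemmas from~\cite{JL2-regularity} (Lemmas~\ref{l:pdef},~\ref{l:opA},~\ref{l:ABB},~\ref{l:ABBQL21},~\ref{l:fest1},~\ref{l:w^2est}). There is therefore no internal proof here against which your outline can be compared in detail.

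That said, the architecture you propose is consistent with the infrastructure visible through those imported lemmas and with the way this kind of strongly interacting multi-bubble construction is typically carried out: (i) the corrector profiles $A, B, \tld B$ in~\eqref{eq:ABB-def} are indeed chosen so that $\bs\Phi$ in~\eqref{eq:Phi-def1} absorbs both the time-derivative-of-scale contributions ($\sim a^2, b^2$) and the leading interaction contributions ($\sim \nu^k$); (ii) the appearance of the truncated virial operators $\calA, \calA_0$ of Section~\ref{s:virial} strongly suggests a modified energy/virial functional for the bootstrap; (iii) backward scattering and global existence are attributed here to~\cite[Theorem 1.6]{JL1}, exactly as you claim; (iv) a backward-in-time limiting construction with modulation-fixed parameters is the standard route (as in~\cite{JJ-AJM} and~\cite{RaSz11}, both cited). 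Where your sketch is imprecise or unverifiable from this document: you state the four orthogonality conditions are ``one at each bubble with respect to $\Lam Q$, plus a second generalized-kernel direction,'' but given the structure of $\dot\Phi$ the natural conditions would simply be $\Lam Q$-orthogonality at each bubble scale in \emph{both} the $g$ and $\dot g$ components; your invocation of a ``supersymmetric factorization of $\LL$'' for the $\HH^2$ and $\bs\Lam^{-1}\HH$ levels cannot be confirmed from this paper; and the precise second-order error rates in~\eqref{eq:mod-est} (the $O(t^{-\frac{4}{k-2}+\eps})$ factors) require more than ``explicit solution of a two-body problem'' -- they require also bounding the next-order corrections, which your sketch elides. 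A definitive assessment of those steps would require the companion paper itself.
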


\subsection{An outline of the proof of uniqueness: method of refined modulation parameters} 
The goal of this paper is to prove that $\bs u_c(t)$ from~\eqref{eq:u_c-def} is the unique $2$-bubble in forward time up to a change of sign,  a fixed time translation, and rescaling. We introduce a dynamical method to accomplish this. We will  highlight below where the need for the refined construction in Theorem~\ref{t:refined} appears in the proof. 



The first observation is that $\bs u_c(t)$ yields an invariant $2$-dimensional sub-manifold $\M$ of the energy space $\HH$ via time translation and scaling. For large times, it is natural to endow this manifold with coordinates related to the $2$-bubble structure of $\bs u_c(t)$, i.e., for all $t \ge T_0$ we use the $C^1$ functions $\lam_c(t), \mu_c(t)$ given by Theorem~\ref{t:refined} such that 
\EQ{
\bs u_c(t) = \bs Q_{\lam_c(t)} + \bs Q_{\mu_c(t)} + o_{\HH}(1)
}
where $\lam_c(t) = q_k t^{-\frac{2}{k-2}}(1 + o(1))$ and $\mu_c = 1 + o(1)$ as $t \to \infty$. Because $\lam_c(t)$ is monotonic in time, it is natural to reparamaterize time via the inverse function, i.e., $t = \lam_{c}^{-1}(\s)$ for $\s \in (0, \s_0]$ where $\s_0 = \lam_c(T_0)$. We define, 
\EQ{
\bs U(\mu, \s) := \bs u_c( \lam_c^{-1}(\s))_{\mu} 
}
i.e., $(\mu, \s)$ give coordinates on $\M$ in the large time regime. 

Now let $\bs u(t) \in \HH $ be \emph{any other} $2$-bubble solution in forward time (with the  sign $\iota = +1$ in~\eqref{eq:2-bub-def}). The idea is to modulate about $\bs U(\mu, \s)$. Via a standard argument, we show that there exist $C^1$-functions $\mu(t), \s(t)$ and $\bs g(t) \in \HH$ such that for large enough times $t$ we have 
\EQ{ \label{eq:ortho-intro} 
\bs u(t) &=  \bs U(\mu(t), \s(t)) + \bs g(t) ,\\
0& = \ang{ \Lam Q_{\mu(t) \s(t)} \mid  g(t)}  = \ang{ \Lam Q_{\mu(t)} \mid  g(t)}
}
and that $ \| \bs g(t) \|_{\HH}, \s(t) \to 0 \mas t \to \infty$ (for the simple reason that $\M$ also asymptotically approaches the set of two bubble configurations $\{ \bs Q_{\lam} - \bs Q_\mu: (\lam, \mu) \in   (0, \infty) \times (0, \infty), \lam/ \mu \ll 1\}$). Note that the desired uniqueness would follow from showing that $\bs g(t) = 0$ for some time $t \ge T_0$.  

We now make use of the fact that $\bs u(t)$ and $\bs U(\mu, \s)$ both have energy $ \E = 2 \E(\bs Q)$. For each time $t \ge T_0$ we consider a Taylor expansion of the energy, 
\EQ{
&\E(\bs U(\mu(t), \s(t))) = \E(  \bs u(t)) = \E( \bs U(\mu(t), \s(t)) +\bs g(t))  \\
& \quad =  \E( \bs U(\mu(t), \s(t))) + \ang{ D \E(\bs U(\mu(t), \s(t))) \mid \bs g(t)}    +  \ang{ D^2 \E(\bs U(\mu(t), \s(t)))  \bs g(t) \mid \bs g(t)}  + o( \| \bs g \|_{\HH}^2) 
}
Subtracting $\E(\bs U(\mu(t), \s(t)))$ from both sides,  establishing a coercivity estimate for the quadratic term (which is a consequence of  the orthogonality conditions~\eqref{eq:ortho-intro}), and making the ``little oh" term above smaller than half the coercivity constant $c_1>0$ (which is possible by taking $T_0>0$ large enough)  we arrive at the inequality 
\EQ{ \label{eq:enexp-intro} 
 0 \ge  \ang{ D \E(\bs U(\mu(t), \s(t))) \mid \bs g(t)}  + \frac{1}{2} c_1 \| \bs g(t) \|_{\HH}^2
}
We next turn to studying the dynamics of the term $\ang{ D \E(\bs U(\mu(t), \s(t))) \mid \bs g(t)}$ with the objective of finding a contradiction above in the case that $\bs g(t) \not \equiv 0$.  This is not an unnatural object to study, as one can observe that $D \E(\bs U(\mu, \s)) = - \lam_c'(\lam_c^{-1}(\s)) \mu^{-1}  J \circ \p_\s \bs U(\mu, \s)$, i.e., it is a renormalized $90$-degree rotation of the tangent vector $\p_\s \bs U(\mu, \s)$ and moreover to leading order we have, 
\EQ{
\ang{ D \E(\bs U(\mu(t), \s(t))) \mid \bs g(t)}  \approx  - \lam_c'(\lam_c^{-1}(\s)) (\mu \s)^{-1} \ang{ \Lam Q_{\mu \s} \mid \dot g} 
}
In other words, $\ang{ D \E(\bs U(\mu(t), \s(t))) \mid \bs g(t)}$ is deeply related to the dynamics of the modulation parameters as can be seen from differentiating the terms in the second line in~\eqref{eq:ortho-intro}. However, a naive second  differentiation of the orthogonality conditions~\eqref{eq:ortho-intro} does not directly reveal useful information on the dynamics since terms of critical size but indeterminate sign arise. Here we use a technique similar to the one developed in~\cite{JJ-APDE, JJ-AJM, JL1} -- we perform an ad hoc correction \emph{to the modulation parameters themselves} using a localized virial functional. 
After proving that $- \lam_c'(\lam_c^{-1}(\s)) = \rho_k \s^{\frac{k}{2}} (1 + o(1))$ in Theorem~\ref{t:refined}, we define, 
\EQ{ \label{eq:b-intro} 
b(t) :=  \frac{1}{\rho_k \s^{\frac{k}{2}}}  \ang{ D \E(\bs U(\mu(t), \s(t))) \mid \bs g(t)} + \ang{ \calA_0(\mu(t) \s(t)) g(t) \mid \dot g(t)}
}
where $\A_0(\mu \s)$ is the same localized and rescaled version of the virial operator used in the companion paper~\cite{JL2-regularity}, i.e., $\A_0(\mu \s) \approx (\mu \s)^{-1} \Lam_0$ up to scale $\mu \s$.  While the correction is small (order $ \| \bs g \|_{\HH}^2$) as compared to the first term, its derivative is large and designed to cancel terms with critical size but indeterminate sign. 


The heart of the argument is an almost monotonicity formula for $b(t)$, proved in Proposition~\ref{p:b'}, which readily leads to a contradiction in~\eqref{eq:enexp-intro}. It is in the proof of Proposition~\ref{p:b'} where the need for refined asymptotics and refined regularity estimates for $\bs U(\mu, \s)$ arises -- indeed, one can observe from~\eqref{eq:b-intro} and~\eqref{eq:DE} that the equation for $b'$ will involve estimates on the second derivatives of $U(\mu, \s)$, given that $\bs g(t)$ can only be assumed to lie in $\HH$. The proof also requires weighted energy estimates. The list of estimates on $\bs U(\mu, \s)$ needed for the argument is given in Corollaries~\ref{c:U-est1} and ~\ref{c:U-est}, and Theorem~\ref{t:refined} is proved with these in mind.  Of course in~\cite{JL1} the same type of higher regularity and weighted estimates arise as well, but there we modulated around the $2$-bubble family $\bs Q_{\lam} - \bs Q_\mu$, rather than the \emph{constructed solution} $\bs U(\mu, \s)$,  and thus the analogous  estimates there followed trivially from the formula for $Q(r)$. 

\begin{rem} The basic outline above draws inspiration from the first author's work~\cite{JJ-Pisa} in a different context. There one uses a combination of the energy expansion with a modulation analysis to rule out two bubble configurations with opposing signs for the critical NLW, albeit without the virial correction to the modulation parameters, which is a crucial ingredient here.   
\end{rem} 




\begin{rem} 
 Note that the argument does not use that $\bs u_c(t)$ is a threshold solution in a crucial way, and thus should be applicable in other settings. 
 \end{rem} 

\begin{rem} \label{r:kinks} 
Together with the proof of uniqueness of the strongly interacting kink-antikink pair in~\cite{JKL1} we have now introduced two quite different techniques to prove uniqueness (and existence) of solutions to dispersive PDEs exhibiting nontrivial dynamics under some qualitative assumption -- here the assumption is the solution has threshold energy but is non-scattering, and in~\cite{JKL1} we look for asymptotically stationary $2$-kink solutions to scalar field equations. 

The methods differ as follows. In~\cite{JKL1} we first establish a quantitative classification of the dynamics for any kink-antikink pair. Then we find a single, unique kink-antikink solution in a time-weighted function space via a contraction mapping argument -- in fact this is done in two-steps by way of a novel implementation of Liapunov-Schmidt reduction. The preliminary quantitative classification result is then used to show that any finite energy kink-antikink must also lie in this weighted function space, which proves uniqueness. 

 In contrast, here \emph{we do not make any use of the dynamical classification of non-scattering threshold solutions obtained in our previous paper~\cite{JL1}} to prove uniqueness. We resort instead to the novel modulation technique that we just outlined above. The steep cost however, is that this modulation method requires very refined information on the constructed solution $\bs u_c(t)$ (including $\HH^2$-estimates), which leads to the lengthy computations in the companion paper~\cite{JL2-regularity}. 
 
 In summary, one can say quite roughly that the method here is inspired by the general principle of weak-strong uniqueness whereas in~\cite{JKL1} the method uses the contraction mapping principle to deduce uniqueness. We note that the method from~\cite{JKL1} should be adaptable to the present setting and vice versa. Both methods should be applicable  in other settings as well. 
\end{rem}


%
%

\section{Preliminaries} 
For radial functions $u, v$ on $L^2(\R^2)$, we write $u = u(r), v = v(r)$ and we use the notation, 
\EQ{
\ang{ u \mid v} := \frac{1}{2 \pi} \ang{ u\mid v}_{L^2( \R^2)} = \int_0^\infty u(r)  \ba{v(r)} \,r \, \ud r .
}
Let $\LL_0$ denote the operator 
\EQ{ \label{eq:LL0-def} 
\LL_0 w := -\De w + \frac{k^2}{r^2} w .
}
We define the function space $H$ as the completion of $C^\infty_0((0, \infty))$ functions $w$ under the norm 
\EQ{
\| w \|_{H}^2 := \ang{ \LL_0 w \mid w}  =  \int_0^\infty \Big(( \p_r w(r))^2  + k^2 \frac{w(r)^2}{r^2}  \Big) \,r \ud r
}
For the vector pair $\bs w =(w , \dot w)$ we define the norm $\HH$ by 
\EQ{
\| \bs w \|_{\HH}^2 := \| w \|_H^2 + \| \dot w \|_{L^2}^2
}
Next, we define the space $H^2$ via the norm, 
\EQ{
 \| w \|_{H^2}^2:= \ang{ \LL_0 w  \mid \LL_0 w} = \int_0^\infty\Big( ( \p_r^2 w(r))^2 + (2k^2 +1)\frac{( \p_r w(r))^2}{r^2}  + (k^4 - 4k^2) \frac{w(r)^2}{r^4} \Big) \, r \, \ud r 
}
And for the pair $ \bs w = (w,  \dot w)$ we define $\HH^2$ by 
\EQ{
\| \bs w \|_{\HH^2}^2:= \| w \|_{H^2}^2 +   \| \dot w \|_H^2 
}
We also require the following weighted norm, 
\EQ{
\| \bs w \|_{\bs \Lam^{-1} \HH} := \| (\La w, \La_0 \dot w) \|_{\HH} 
}
where $\Lam, \Lam_0$ are defined in~\eqref{eq:LaLa0}. 
It is a standard fact that the regularity of a solution $\bs u(t)$ to~\eqref{eq:wmk} in the space $\HH \cap \HH^2 \cap \bs \Lam^{-1} \HH$ is propagated by the flow. 

The  infinitesimal generators $\Lam, \Lam_0$ defined in~\eqref{eq:LaLa0} satisfy the integration by parts identities,
\EQ{ \label{eq:ibpLa} 
\ang{ \Lam f \mid g} &= - \ang{ f \mid \La g} - 2 \ang{ f \mid g} , \quad 
\ang{ \Lam_0 f \mid g} = -\ang{ f \mid \Lam_0 g} 
}

The operator $\LL_U$ obtained by linearization of~\eqref{eq:wmk} about the first component of finite energy map $\bs U = (U,  \dot U)$ plays an important role in the analysis. Given $g \in H$ we have, 
\EQ{  \label{eq:LU-def} 
\LL_U g := - \De g + k^2 \frac{\cos 2U}{r^2}  g
}
In fact, given any $\bs g = (g,\dot g) \in \HH$ we have 
\EQ{
\ang{ \uD^2 \E(\bs U) \bs g \mid \bs g } = \ang{ \LL_U g \mid g}_{L^2} + \ang{ \dot g \mid \dot g}_{L^2} =  \int_0^\infty \Big(\dot g^2 + (\p_r g)^2 + k^2 \frac{\cos 2U}{r^2} g^2 \, \Big) r \ud r
}


The most important instance of the operator $\LL_{U}$ is given by linearizing \eqref{eq:wmk} about $ U = Q_\la$. In this case we use the short-hand notation, 
\EQ{ \label{eq:LL-def} 
\LL_\la:= \LL_{Q_{\lam}} = (-\De + \frac{k^2}{r^2}) + \frac{1}{r^2} ( f'(Q) - k^2) 
}
We write $\LL := \LL_1$. 
We often use the notation $\LL= \LL_0 + P$, where $\LL_0$ is as in~\eqref{eq:LL0-def} and 
\EQ{ \label{eq:P-def} 
P(r)&:= \frac{1}{r^2}( f'(Q) - k^2)  = -\frac{2k^2\sin^2 Q}{r^2}  = -4k^2\frac{ r^{2k-2}}{ (1+ r^{2k})^2} 
}
We recall that 
\EQ{
\Lam Q(r) = k \sin Q = \frac{2k r^k}{1 + r^{2k}}
}
 is a zero energy eigenfunction for  $\LL$, that is,  
\EQ{
\LL \La Q = 0, \mand \La Q  \in L^2_{\textrm{rad}}(\R^2).
}
for all $k \ge 2$. When $k=1$, $\LL \La Q = 0$ holds but  $\La Q \not \in L^2$ due to slow decay as $r \to \infty$  and $0$ is referred to as a threshold resonance.
In fact, $\La Q$ spans the kernel of $\LL$; see~\cite{JL1} for more. 

We require the following localized coercivity result for functions in the orthogonal complement to the kernel of $\LL$. This was proved in detail in~\cite{JJ-AJM}; see also~\cite{JL2-regularity}. 
\begin{lem}[Localized coercivity for $\LL$] \emph{\cite[Lemma 5.4]{JJ-AJM}}  \label{l:loc-coerce} 
There exists a uniform constant $c_1>0$  with the following property. Suppose $w \in H$ is such that 
\EQ{ \label{eq:w-ortho1} 
\ang{ w \mid  \Lam Q} = 0. 
}
Then, 
\EQ{ \label{eq:L-coerce}
\ang{ \LL w \mid w} \ge c_1  \| w \|_{H}^2 .
}
In addition, for any $c>0$, there exists $R_1>0$ large enough so that for all $w \in H$ as in~\eqref{eq:w-ortho1}, we have 
\EQ{ \label{eq:L-loc1} 
\int_0^{R_1} \Big((\p_r w(r))^2 + k^2 \frac{w(r)^2}{r^2} \Big) \, r \ud r + \ang{ P w \mid w} \ge - c \| w \|_{H}^2 
}
Lastly, for any $c>0$, there exists $r_1>0$ small enough so that for all $w \in H$ as in~\eqref{eq:w-ortho1}, we have 
\EQ{ \label{eq:L-loc2} 
\int_{r_1}^\infty \Big((\p_r w(r))^2 + k^2 \frac{w(r)^2}{r^2}\Big)  \, r \ud r +  \ang{ P w \mid w} \ge - c \| w \|_{H}^2 
}
\end{lem}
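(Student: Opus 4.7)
The plan is to prove the global coercivity \eqref{eq:L-coerce} first, and then derive the localized estimates \eqref{eq:L-loc1}--\eqref{eq:L-loc2} from it via the same weak-compactness scheme. The only non-trivial spectral inputs I use are that $\ker \LL = \mathrm{span}(\Lam Q)$ and that $\LL \ge 0$ on $(\Lam Q)^\perp$; these follow from $Q$ being a minimizer of the Dirichlet energy in its homotopy class, together with a Perron--Frobenius argument applied to the strictly positive ground state $\Lam Q$.

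For \eqref{eq:L-coerce} I would argue by contradiction: suppose $w_n \in H$ satisfy $\| w_n \|_H = 1$, $\ang{w_n \mid \Lam Q} = 0$, and $\ang{\LL w_n \mid w_n} \to 0$, and extract a subsequence with $w_n \wto w_\star$ weakly in $H$. The key technical step is that $w \mapsto \ang{P w \mid w}$ is weakly continuous on $H$: since $P(r) = -4k^2 r^{2k-2}(1+r^{2k})^{-2}$ decays like $r^{2k-2}$ at $r = 0$ and like $r^{-2k-2}$ at $r = \infty$, and the $H$ norm implies the pointwise control $|w(r)| \lec \min(r^k, 1)\| w \|_H$ via Hardy-type estimates, the tails $r \in [0,\eps] \cup [R, \infty)$ contribute to $\ang{P w_n \mid w_n}$ uniformly $o_{\eps,R}(1)$ in $n$, while Rellich--Kondrachov compactness on $[\eps, R]$ handles the middle range. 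Combined with lower semicontinuity of $\|\cdot\|_H$, this yields $\ang{\LL w_\star \mid w_\star} \le \liminf_n \ang{\LL w_n \mid w_n} = 0$, and positivity of $\LL$ on $(\Lam Q)^\perp$ forces $w_\star \in \ker \LL \cap (\Lam Q)^\perp = \{0\}$. Therefore $\| w_n \|_H^2 = \ang{\LL w_n \mid w_n} - \ang{P w_n \mid w_n} \to 0$, contradicting $\| w_n \|_H = 1$.

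For \eqref{eq:L-loc1}, if the estimate fails for some fixed $c > 0$ there exist $R_{1,n} \to \infty$ and $w_n$ with $\| w_n \|_H = 1$, $\ang{w_n \mid \Lam Q} = 0$, and $\int_0^{R_{1,n}}((\p_r w_n)^2 + k^2 w_n^2/r^2) \, r \ud r + \ang{P w_n \mid w_n} < -c$. Let $w_n \wto w_\star$ weakly in $H$. If $w_\star = 0$, the weak continuity of $\ang{P \cdot \mid \cdot}$ gives $\ang{P w_n \mid w_n} \to 0$; since the first integral is non-negative, the left-hand side is $\ge -o(1)$, contradicting the assumption. If $w_\star \ne 0$, a diagonal lower-semicontinuity argument gives
\ant{
\liminf_n \int_0^{R_{1,n}} \bigl( (\p_r w_n)^2 + k^2 w_n^2/r^2 \bigr) \, r \, \ud r \ge \| w_\star \|_H^2,
}
so the liminf of the left-hand side is $\ge \ang{\LL w_\star \mid w_\star} \ge c_1 \| w_\star \|_H^2 \ge 0$, again contradicting $< -c$. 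The same two-case dichotomy with $r_{1,n} \to 0^+$, together with $\int_{r_{1,n}}^\infty ( \ldots ) \, r \ud r \to \| w_n \|_H^2$, handles \eqref{eq:L-loc2}. The main conceptual obstacle is the spectral positivity $\LL \ge 0$ on $(\Lam Q)^\perp$ with unique null direction $\Lam Q$; once that is in hand, the remainder of the argument is a routine weak-compactness exercise exploiting the rapid decay of $P$ at both endpoints.
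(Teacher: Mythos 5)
The paper does not prove this lemma; it cites \cite[Lemma 5.4]{JJ-AJM} and gives no argument of its own, so there is no in-text proof to compare against. Your concentration--compactness scheme --- contradiction, weak extraction, weak continuity of the potential form $w \mapsto \ang{Pw \mid w}$, lower semicontinuity of the principal part, and the spectral input $\LL \ge 0$ with $\ker \LL = \operatorname{span}(\Lam Q)$ --- is the standard route for this type of localized coercivity statement and is structurally sound, including the two-case dichotomy for \eqref{eq:L-loc1} and \eqref{eq:L-loc2} and the diagonal lower-semicontinuity step for the truncated integrals.

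One technical point needs correction. The pointwise bound $|w(r)| \lec \min(r^k,1)\|w\|_H$ that you cite to control the tails is not implied by membership in $H$: a function behaving like $r^{1/2}$ near the origin (smoothly truncated away from $r=0$) lies in $H$, yet violates $|w(r)| \lec r^k$ near $r=0$ for every $k \ge 1$. The $H$-norm only gives $|w(r)| \lec \|w\|_H$. Fortunately, the tail estimate does not require any pointwise decay of $w$ --- it follows directly from the Hardy piece of the $H$-norm. Since $|P(r)| \lesssim r^{2k-2}$ for $r \le 1$ and $|P(r)| \lesssim r^{-2k-2}$ for $r \ge 1$, one has
\begin{align*}
\int_0^\eps |P|\, w^2 \,r\,\ud r \lesssim \eps^{2k}\int_0^\eps \frac{w^2}{r^2}\,r\,\ud r \lesssim \eps^{2k}\|w\|_H^2,
\qquad
\int_R^\infty |P|\, w^2 \,r\,\ud r \lesssim R^{-2k}\,\|w\|_H^2,
\end{align*}
which gives the required uniform-in-$n$ tail smallness; combined with Rellich compactness on $[\eps,R]$ this establishes the weak continuity of $\ang{Pw \mid w}$ along $H$-weakly convergent sequences exactly as you need. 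With this substitution the derivation of \eqref{eq:L-coerce} and the subsequent deduction of \eqref{eq:L-loc1}--\eqref{eq:L-loc2} go through as written.
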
 



\subsection{The truncated virial operators} \label{s:virial} 
We define truncated virial operators $\calA(\la)$ and $\calA_0(\la)$, and state related estimates. Nearly identical operators were introduced by the first author in~\cite{JJ-AJM} and used crucially by the authors in~\cite{JL1}. We require a slight modification, which was established in~\cite{JL2-regularity}.  

\begin{lem} \emph{\cite[Lemma 4.6]{JJ-AJM} \cite[Lemma 4.1]{JL2-regularity}}
  \label{l:pdef}
  For each $c, R > 0$ there exists a function $p(r) = p_{c, R}(r) \in C^{5,1}((0, +\infty))$ with the following properties:
  \begin{enumerate}
    \item $p(r) = \frac{1}{2} r^2$ for $r \leq R$, 
    \item there exists $\ti R = \ti R(R, c)> R$ such that $p(r) \equiv \tx{const}$ for $r \geq \ti R$, 
    \item $|p'(r)| \lesssim r$ and $|p''(r)| \lesssim 1$ for all $r > 0$, with constants independent of $c, R$, 
    \item $p''(r) \geq -c$ and $\frac 1r p'(r) \geq -c$, for all $r > 0$, 
    \item $ \abs{r \p_r \De p} \le c$ for all $r > 0$,
    \item $\De^2 p(r) \leq c\cdot r^{-2}$, for all $r > 0$,
    \item $\De^3 p(r)  \ge -c  \cdot r^{-4}$ for all $r >0$, 
    \item $\big|r\big(\frac{p'(r)}{r}\big)'\big| \leq c$, for all $r > 0$, 
    \item $\abs{ r \bigg(r\big(\frac{p'(r)}{r}\big)'  \bigg)' } \leq c$ for all $r >0$.
 \end{enumerate}
\end{lem}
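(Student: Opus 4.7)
The plan is to construct $p$ from its derivative by prescribing $h(r) := p'(r)/r$ as a smooth cutoff spread \emph{logarithmically} in $r$ over many length scales; the logarithmic spreading is precisely what keeps all higher derivatives small. Fix once and for all a smooth non-increasing $\psi : \R \to [0,1]$ with $\psi \equiv 1$ on $(-\infty, 0]$ and $\psi \equiv 0$ on $[1, \infty)$. For a parameter $T \geq 1$ to be chosen depending on $c$, I would set $\tilde R := e^{T} R$ and define
\begin{equation*}
h(r) := \psi\!\left( \frac{\log r - \log R}{T} \right), \qquad p(r) := \int_0^r \sigma\, h(\sigma) \, \mathrm{d}\sigma.
\end{equation*}
Properties (1) and (2) are immediate from the support properties of $\psi$, and the $C^{5,1}$ regularity is inherited from the smoothness of $\psi$.

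The next step is to translate each remaining bound into the logarithmic variable $s := \log r$. Writing $\psi_T(s) := \psi((s-\log R)/T)$ so that $h(r) = \psi_T(s)$, one has the uniform estimate $|\psi_T^{(j)}(s)| \leq C_j T^{-j}$ for every $j \geq 1$, where $C_j$ depends only on $\psi$. The key analytic identity, which follows from $r\partial_r = \partial_s$, is
\begin{equation*}
r^2 \Delta\bigl(g(s)/r^\alpha\bigr) = \bigl(g''(s) - 2\alpha\, g'(s) + \alpha^2 g(s)\bigr)/r^\alpha
\end{equation*}
for any smooth $g$. With $p'(r) = r\psi_T$ and $p''(r) = \psi_T + \psi_T'$, a routine chain-rule calculation then expresses every quantity appearing in (3)--(9) in closed form: for instance $p'(r)/r = \psi_T$, $\Delta p = 2\psi_T + \psi_T'$, $r\partial_r \Delta p = 2\psi_T' + \psi_T''$, $\Delta^2 p = (2\psi_T'' + \psi_T''')/r^2$, $\Delta^3 p = (\psi_T^{(5)} - 2\psi_T^{(4)} - 4\psi_T''' + 8\psi_T'')/r^4$, and $r(p'/r)' = \psi_T'$, $r(r(p'/r)')' = \psi_T''$.

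Once these formulas are in hand, each of (3)--(9) reduces to a bound of the form $|\psi_T^{(j)}(s)| \leq C_j T^{-j}$ for some $j \geq 1$, with the positivity $\psi_T \geq 0$ providing the one-sided lower bound in (4) for $p'/r$ and complementing $|\psi_T'| \leq C/T$ for the one on $p''$. Choosing $T = T(c)$ large enough that $C_j/T \leq c$ for the finitely many indices $j \in \{1,\dots,5\}$ relevant above simultaneously satisfies every condition, and this fixes $\tilde R = e^T R$ as required. There is no genuine analytic obstacle in this lemma — the only conceptual point is the initial observation that logarithmic (rather than linear) spreading of the cutoff is what forces each derivative bound to acquire a negative power of $T$, so that the constants on the right-hand side of (4)--(9) can be made uniformly small with $c$; the remaining work is pure bookkeeping.
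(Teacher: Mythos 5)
Your construction is correct and is essentially the standard one used in the cited references (Jendrej's AJM paper Lemma~4.6 and the companion paper Lemma~4.1): spread the cutoff of $p'(r)/r$ over a logarithmic window $[R, e^T R]$ so that every differentiation $r\partial_r$ produces a factor of $T^{-1}$. All the closed-form identities you record --- $\Delta p = 2\psi_T + \psi_T'$, $r\partial_r\Delta p = 2\psi_T' + \psi_T''$, $\Delta^2 p = (2\psi_T'' + \psi_T''')/r^2$, $\Delta^3 p = (\psi_T^{(5)} - 2\psi_T^{(4)} - 4\psi_T''' + 8\psi_T'')/r^4$, $r(p'/r)' = \psi_T'$, $r\bigl(r(p'/r)'\bigr)' = \psi_T''$ --- check out against the $2$-dimensional radial Laplacian and the key identity $r^2\Delta\bigl(g(s)/r^\alpha\bigr) = (g'' - 2\alpha g' + \alpha^2 g)/r^\alpha$, and the one-sided bounds in (4), (6), (7) follow from the pointwise sign $\psi_T \geq 0$ together with the scaling $|\psi_T^{(j)}| \lesssim T^{-j}$ for $j\geq 1$. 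Choosing $T=T(c)$ large then delivers all of (3)--(9) simultaneously, and $\tilde R = e^T R$ as required. The only cosmetic difference from the source is that you build $p$ to be $C^\infty$, which is stronger than the stated $C^{5,1}$ and entirely fine; the identification of logarithmic (rather than linear) spreading as the mechanism that makes each constant proportional to a negative power of $T$ is exactly the point.
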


For each $\lambda > 0$ define $\A(\lambda)$ and $\A_0(\lambda)$ as follows, 
\begin{align}
  [\A(\lambda)w](r) &:= p'\big(\frac{r}{\lambda}\big)\cdot \p_r w(r), \label{eq:opA} \\
  [\A_0(\lambda)w](r) &:= \big(\frac{1}{2\lambda}p''\big(\frac{r}{\lambda}\big) + \frac{1}{2r}p'\big(\frac{r}{\lambda}\big)\big)w(r) + p'\big(\frac{r}{\lambda}\big)\cdot\p_r w(r). \label{eq:opA0}
\end{align}
Note the similarity between $\A$ and $\frac{1}{\la} \La$ and between $\A_0$ and $\frac{1}{\la} \La_0$. 
Recall the notation, 
$
\LL_0:= -\De + \frac{k^2}{r^2} . 
$
\begin{lem} \emph{\cite[Lemma 5.5]{JJ-AJM} \cite[Lemma 4.2]{JL2-regularity}}
  \label{l:opA}
  Let $c_0>0$ be arbitrary. There exists $c>0$ small enough and $R, \ti R>0$ large enough in Lemma~\ref{l:pdef} so that the operators $\A(\lambda)$ and $\A_0(\lambda)$ defined in~\eqref{eq:opA} and~\eqref{eq:opA0} have the following properties:
  \begin{itemize}[leftmargin=0.5cm]
    \item the families $\{\A(\lambda): \lambda > 0\}$, $\{\A_0(\lambda): \lambda > 0\}$, $\{\lambda\partial_\lambda \A(\lambda): \lambda > 0\}$
      and $\{\lambda\partial_\lambda \A_0(\lambda): \lambda > 0\}$ are bounded in $\mathscr{L}(H; L^2)$, with the bound depending only on the choice of the function $p(r)$,
      \item In addition, the operators $\A_0(\lam)$ and $\lam \p_\lam \A_0(\lam)$ satisfy the bounds
      \EQ{ \label{eq:prA0} 
       \| \p_r \A_0(\lam) w \|_{L^2}  +  \| r^{-1} \A_0(\lam) w \|_{L^2} &\lesssim  \| \p_r w \|_H +  \frac{1}{\lam} \| w \|_H \\
       \| \p_r \lam \p_\lam \A_0(\lam) w \|_{L^2}  +  \| r^{-1} \lam \p_\lam \A_0(\lam) w \|_{L^2} &\lesssim  \| \p_r w \|_H +  \frac{1}{\lam} \| w \|_H 
      }
      with a constant that depends only on the choice of the function $p(r)$, 
  
   
    \item For all $w \in H \cap H^2$ we have  
\EQ{
        \label{eq:A-pohozaev}
        \ang{\A_0(\lambda)w  \mid  \LL_0 w} \ge -\frac{c_0}{\lambda}\|w\|_{H}^2 + \frac{1}{\lambda}\int_0^{R\lambda}\Big((\partial_r w)^2 + \frac{k^2}{r^2}w^2\Big) \udr, 
        }
        
        \item Moreover, for $\la, \mu >0$ with $\la/\mu \ll 1$, 
        \EQ{
         \label{eq:L0-A0-wm}
      \|\Lambda_0 \Lambda Q_\uln{\lambda} - \A_0(\lambda)\Lambda Q_{\lambda}\|_{L^2} \leq c_0,
        }
\item Finally, let $P_\lam(r)$ denote the potential, 
$P_{\lam}(r):= \frac{1}{r^2} ( f'(Q_\lam) - k^2) 
$.
We have, 
\EQ{ \label{eq:vir-new} 
 \abs{  \ang{ \calA_0(\la) w \mid P_\lam(r) w}  - \ang{  \frac{1}{\la} \La_0 w \mid P_\lam(r) w} } \le \frac{c_0}{\la} \| w \|_{H}^2 
}

  \end{itemize}
\end{lem}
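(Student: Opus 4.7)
The plan is to treat each item essentially independently by reducing the operator $\A_0(\lambda)$ (and its variants) to the multiplication and first-derivative structure $a(r) + p'(r/\lambda)\p_r$, where $a(r) = \frac{1}{2r}(r\,p'(r/\lambda))'$, and then to exploit (a) the compact support and boundedness properties of $p'$, $p''$ guaranteed by Lemma~\ref{l:pdef}, and (b) the radial Hardy inequality $\|w/r\|_{L^2} \lesssim k^{-1}\|w\|_{H}$.

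First, for the boundedness of the families in $\mathscr{L}(H;L^2)$: Lemma~\ref{l:pdef}(3) gives $|p'(r/\lambda)|\lesssim 1$ and $|p''(r/\lambda)|\lesssim 1$, both supported in $r\le \tilde R\lambda$. Since $1/\lambda \le \tilde R/r$ on this support, the zeroth-order multiplier of $\A_0(\lambda)$ satisfies $|a(r)|\lesssim 1/\lambda$, hence $\|a\,w\|_{L^2}\lesssim \lambda^{-1}\mathbf 1_{\{r\le \tilde R\lambda\}}\|w\|_{L^2} \lesssim \|w/r\|_{L^2}\lesssim \|w\|_H$; the $p'(r/\lambda)\p_r w$ piece is bounded immediately by $\|\p_r w\|_{L^2}\le\|w\|_H$. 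The same argument with $\lambda\p_\lambda p'(r/\lambda) = -(r/\lambda)p''(r/\lambda)$ and $\lambda\p_\lambda p''(r/\lambda) = -(r/\lambda)p'''(r/\lambda)$ (bounded by Lemma~\ref{l:pdef}(9) which forces $|s\,p'''(s)|\lesssim 1$) handles the $\lambda\p_\lambda$ variants. For \eqref{eq:prA0} I would just distribute $\p_r$ and $r^{-1}$: $\p_r[\A_0(\lambda)w] = a'w + a\p_r w + \lambda^{-1}p''(r/\lambda)\p_r w + p'(r/\lambda)\p_r^2 w$, then bound the last term by $\|\p_r w\|_H$, the two middle ones by $\lambda^{-1}\|w\|_H$, and the first using Hardy together with the pointwise estimate $|r\,a'(r)|\lesssim \lambda^{-1}(|sp'''(s)|+|p''(s)|+|p'(s)/s|)\lesssim \lambda^{-1}$, which again invokes Lemma~\ref{l:pdef}(3) and (9); the $r^{-1}\A_0(\lambda)$ bound is analogous.

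The core assertion is the Pohozaev inequality \eqref{eq:A-pohozaev}, which I expect to be the main obstacle. The strategy is to compute $\langle \A_0(\lambda)w \mid \LL_0 w\rangle$ by straight integration by parts for radial functions: the identity
\[
\langle (aw + p'(r/\lambda)\p_r w) \mid (-\Delta w + k^2 r^{-2}w)\rangle = \frac{1}{\lambda}\int_0^\infty p''(r/\lambda)\Big((\p_r w)^2 + \frac{k^2}{r^2}w^2\Big)\,r\,\ud r + R[w]
\]
should hold with an error
\[
R[w] = -\frac{1}{4}\int_0^\infty (\Delta^2 p)(r/\lambda)\,\frac{w^2}{\lambda^3}\,r\,\ud r + \text{boundary-free correction terms,}
\]
where all remaining pieces can be written as integrals against $\Delta p$, $\Delta^2 p$, $r\p_r\Delta p$, and $r(p'(r)/r)'$. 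On $r\le R\lambda$ one has $p(s) = s^2/2$, so $p''\equiv 1$ there and the leading term yields exactly $\lambda^{-1}\int_0^{R\lambda}((\p_r w)^2 + k^2 w^2/r^2)\,r\,\ud r$. On $r\ge R\lambda$, Lemma~\ref{l:pdef}(4)--(7) together with Hardy give $|R[w]|\le c_0\lambda^{-1}\|w\|_H^2$ and $\lambda^{-1}\int_{R\lambda}^\infty p''(r/\lambda)((\p_r w)^2 + k^2 w^2/r^2) r\,\ud r \ge -c_0\lambda^{-1}\|w\|_H^2$ by the lower bound $p''(r)\ge -c$. Choosing $c$ in Lemma~\ref{l:pdef} small enough produces the stated estimate.

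For \eqref{eq:L0-A0-wm} and \eqref{eq:vir-new} I would exploit the identity that $\A_0(\lambda)$ coincides exactly with $\lambda^{-1}\Lambda_0$ on $[0,R\lambda]$ (since $p(s)=s^2/2$ there gives $p'(s)=s$ and $a(r)=1/\lambda$). Therefore both differences reduce to tail integrals over $r\ge R\lambda$. For \eqref{eq:L0-A0-wm} one uses the explicit decay $|\Lambda Q_\lambda(r)|\lesssim (\lambda/r)^{k}$ and $|\p_r\Lambda Q_\lambda(r)|\lesssim \lambda^{k}/r^{k+1}$ as $r\to\infty$, which makes $\|\Lambda_0\Lambda Q_{\underline\lambda} - \A_0(\lambda)\Lambda Q_\lambda\|_{L^2}$ an explicit computation in the weight, vanishing as $R\to\infty$ so one can pick $R$ so that it is $\le c_0$. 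For \eqref{eq:vir-new} one uses the rapid decay $|P_\lambda(r)|\lesssim \lambda^{2k}/r^{2k+2}$ for $r\ge R\lambda$ together with the boundedness of $\A_0(\lambda)$ and $\lambda^{-1}\Lambda_0$ on $H$ (with Hardy) to bound the tail contribution $\big|\langle(\A_0(\lambda)-\lambda^{-1}\Lambda_0)w\mid P_\lambda w\rangle\big|$ by $c_0\lambda^{-1}\|w\|_H^2$, again by choosing $R$ large (hence $c$ small). The hardest bookkeeping is ensuring that all constants depend only on $c$ and $R$ from Lemma~\ref{l:pdef}, so that the chosen $c_0$ really does govern all four error terms simultaneously; this is why the order of quantifiers in the statement (``$c_0$ arbitrary, then choose $c,R$'') is important.
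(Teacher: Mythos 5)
This lemma is stated in the paper purely as a citation (to \cite[Lemma 5.5]{JJ-AJM} and \cite[Lemma 4.2]{JL2-regularity}), so there is no in-paper proof to compare against; your plan, however, reproduces the standard argument from those references: rewrite $\A_0(\lambda)$ as $a(r)+p'(r/\lambda)\p_r$ (equivalently $\lambda[\tfrac12\Delta P\cdot + \nabla P\cdot\nabla]$ with $P=p(\cdot/\lambda)$), exploit the support and pointwise bounds on $p'$, $p''$ plus Hardy for the operator norms, use the radial virial identity with error terms controlled by items (4)--(9) of Lemma~\ref{l:pdef} for \eqref{eq:A-pohozaev}, and exploit $\A_0(\lambda)=\lambda^{-1}\Lambda_0$ on $[0,R\lambda]$ together with the tail decay of $\Lambda Q$ and $P_\lambda$ for the last two items. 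The only imprecision is your remark that ``$\lambda^{-1}\Lambda_0$ is bounded on $H$'': the piece $\lambda^{-1}r\p_r$ is \emph{not} an $H\to L^2$ bounded operator, and one must instead feed the decay $|P_\lambda(r)|\lesssim \lambda^{2k}r^{-2k-2}$ on $r\ge R\lambda$ directly into the pairing (paying $r^{-2k+2}$ against the extra power of $r$ in $\Lambda_0$) to close the estimate --- the argument still works, but not by abstract operator boundedness.
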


\subsection{Properties of the ansatz $\bs \Phi(\mu, \lam, a, b)$}

The $2$-bubble ansatz $\bs \Phi(\mu, \lam, a, b)$ is defined in~\eqref{eq:Phi-def1}. The arguments in~\cite{JL2-regularity} required detailed information about $\bs \Phi$ and we recall several formulas and estimates proved there.  First recall that $A, B, \ti B$ are defined so that, 
\EQ{ \label{eq:LLb2T} 
\LL_{\la}( b^2 A_\la + \nu^k B_{\lam})&= - \frac{b^2}{\la} \La_0 \La Q_{\U \la} + \gamma_k \frac{\nu^k}{\la} \La Q_{\U \la}  - 4  \frac{(r/ \mu)^k (\La Q_{\la})^2}{r^2} \\
 \LL_\mu (a^2 A_{\mu} + \nu^k \ti B_{\mu}) &= - \frac{a^2}{\mu} \La_0  \La Q_{\U \mu}   - \ti \gamma_k \frac{\nu^k}{\mu}  \La Q_{\U \mu}  + 4 \frac{(r/ \la)^{-k} (\La Q_{\mu})^2}{r^2} 
}
The existence of such $A, B, \ti B$ is made precise in the following lemma. 
\begin{lem}  \emph{\cite[Lemma 3.3]{JL2-regularity}}\label{l:ABB} Let $k \ge 4$. 
There exist $C^\infty(0, \infty)$ functions $A, B, \ti B$ satisfying~\eqref{eq:ABB-def}. Moreover $A, B, \ti B$ satisfy the estimates 
\EQ{ \label{eq:Aest} 
A(r) &= O( r^{k}), \quad \p_r A(r) = O(r^{k-1}), \quad \p_r^2 A(r) = O(r^{k-2})   \mas r \to 0  \\
A(r) &= O(r^{-k+2}), \quad \p_r A(r) = O(r^{-k+1}), \quad \p_r^2 A(r) = O(r^{-k})   \mas r \to \infty
}
\EQ{ \label{eq:Best} 
B(r) &= O( r^{k}), \quad \p_r B(r) = O(r^{k-1}), \quad \p_r^2 B(r) = O(r^{k-2})   \mas r \to 0  \\
B(r) &= O(r^{-k+2}), \quad \p_r B(r) = O(r^{-k+1}), \quad \p_r^2 B(r) = O(r^{-k})   \mas r \to \infty
}
\EQ{ \label{eq:tiBest} 
\ti B(r) &= O( r^{k}\abs{\log r}), \, \,  \p_r \ti B(r) = O(r^{k-1}\abs{\log r}), \, \,  \p_r^2  \ti B(r) = O(r^{k-2}\abs{\log r})   \mas r \to 0  \\
 \ti B(r) &= O(r^{-k+2}), \quad \p_r  \ti B(r) = O(r^{-k+1}), \quad \p_r^2 \ti  B(r) = O(r^{-k})   \mas r \to \infty
}
Moreover we have $A, \La A, \La_0 \La A, B,  \La B, \La_0 \La B \in L^2(\R^2)$ and $\ti B, \La \ti B, \La_0 \La \ti B  \in L^2(\R^2)$. 
\end{lem}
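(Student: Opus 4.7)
The plan is to recognize that in the radial setting each equation in \eqref{eq:ABB-def} reduces to a linear inhomogeneous ODE $\LL u = f$ with $\LL u = -\frac{1}{r}(r u')' + V(r) u$, $V(r) = k^2 \cos(2Q)/r^2$, and to construct the solutions by variation of parameters. Since $\Lam Q$ is an explicit element of $\ker \LL$, Abel's identity $r(\Lam Q \cdot u' - \Lam Q' \cdot u) = w_0 \ne 0$ yields a second linearly independent solution $\Theta(r) := \Lam Q(r) \int_1^r \frac{ds}{s\,\Lam Q(s)^2}$. Inserting $\Lam Q(r) \sim 2k r^k$ at the origin and $\Lam Q(r) \sim 2k r^{-k}$ at infinity gives the asymptotics $\Theta(r) \sim c_0 r^{-k}$ as $r \to 0$ and $\Theta(r) \sim c_\infty r^k$ as $r \to \infty$, together with analogous rates for $\Theta', \Theta''$.

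Next I would verify that the right-hand sides in \eqref{eq:ABB-def} are orthogonal to $\ker\LL = \spn\{\Lam Q\}$. For $A$ this is automatic, since $\Lam_0 = 1 + r\p_r$ is anti-symmetric on $L^2_{\tx{rad}}(r\, dr)$ and hence $\ang{\Lam_0 \Lam Q \mid \Lam Q} = 0$. For $B$, the constant $\gamma_k = k \rho_k^2/2$, given the identity $\rho_k^2 = 16k / \|\Lam Q\|_{L^2}^2$, is the unique choice for which $\gamma_k \|\Lam Q\|_{L^2}^2 = 4 \ang{r^{k-2}(\Lam Q)^3 \mid 1}$; this is verified by a direct substitution $u = r^{2k}$ reducing both sides to Beta-function integrals. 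For $\ti B$, the symmetry $\Lam Q(1/r) = \Lam Q(r)$ under $r \mapsto 1/r$ yields $\ang{r^{-k-2}(\Lam Q)^3 \mid 1} = \ang{r^{k-2}(\Lam Q)^3 \mid 1}$, so the same $\gamma_k$ enforces orthogonality with the opposite sign.

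Once solvability is established, define
\[
u_p(r) := \frac{1}{w_0}\bigl[\Lam Q(r) \int_0^r \Theta(s) f(s)\, s\, ds - \Theta(r) \int_0^r \Lam Q(s) f(s)\, s\, ds\bigr],
\]
and set $A := u_p + c_A \Lam Q$ with $c_A$ chosen so that $\ang{A \mid \Lam Q} = 0$; define $B$ and $\ti B$ similarly. Near $r = 0$ the endpoint behavior of $u_p$ follows by substituting the local expansions of $\Lam Q, \Theta, f$ into the integrals, giving $A(r), B(r) = O(r^k)$. For $\ti B$ the right-hand side contains the resonant contribution $4 r^{-k-2}(\Lam Q)^2 \sim 16 k^2 r^{k-2}$ as $r \to 0$, which matches the indicial exponent $k$ of $\LL_0 = -\De + k^2/r^2$ at the origin, so the particular solution picks up a $\log r$ factor, producing $\ti B(r) = O(r^k |\log r|)$. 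Near $r = \infty$ the orthogonality $\int_0^\infty \Lam Q \cdot f \cdot s\, ds = 0$ is essential: it converts $\int_0^r$ into $-\int_r^\infty$ in the second term and exposes the cancellation needed to conclude $u_p(r) = O(r^{-k+2})$ in spite of the non-integrability ($\sim s$) of $\Theta f s$ at infinity.

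The main obstacle is precisely this delicate asymptotic bookkeeping at infinity: $\Theta(s) f(s) s$ grows linearly, so finiteness of $u_p$ as $r \to \infty$ hinges on cancellation between the two integrals, made explicit via orthogonality and only possible because $\gamma_k$ has been tuned. The pointwise bounds on $\p_r A, \p_r^2 A$ and their analogues follow by differentiating the integral representation and applying the same analysis to $\Lam Q', \Theta', f'$; the resulting rates $r^{-k+1}, r^{-k}$ at infinity are integrable against $r \, dr$ for $k \ge 4$, giving the required $L^2$-memberships of $A, \Lam A, \Lam_0 \Lam A$ and the corresponding statements for $B, \ti B$. The logarithm in $\ti B$ at the origin is harmless since $r^k |\log r|$ is integrable against $r\, dr$ at $0$.
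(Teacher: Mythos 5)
Your approach is the natural one for this lemma and, as far as can be verified from the present paper (the detailed proof is in the companion paper~\cite{JL2-regularity}), it matches the intended argument: reduction to a radial ODE, construction of the second solution $\Theta \sim r^{-k}$ at $0$, $\sim r^{k}$ at $\infty$ by reduction of order, variation of parameters, orthogonality of the inhomogeneity to $\Lam Q$ for the decay at infinity, and the resonance at the exponent $k$ producing the $\log$ factor for $\ti B$ at the origin. The key checkpoints also come out right: $\ang{\Lam_0 \Lam Q \mid \Lam Q} = 0$ by antisymmetry, the inversion symmetry $\Lam Q(1/r) = \Lam Q(r)$ equates the two cubic integrals, and with $\|\Lam Q\|_{L^2}^2 = 2\pi/\sin(\pi/k)$ the normalization $\gamma_k = 8k^2/\|\Lam Q\|_{L^2}^2$ is precisely what makes the RHS of the $B$ and $\ti B$ equations orthogonal to $\Lam Q$, since $\int_0^\infty r^{k-1}(\Lam Q)^3\, dr = 2k^2$. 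The $L^2$ memberships of $A, \Lam A, \Lam_0\Lam A$ (and analogues) then follow exactly as you say from the $r^{-k+2}$ tail, and $k \ge 4$ is exactly what makes $r^{-k+2} \in L^2(r\,dr)$ at infinity.
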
 
We have the following technical lemmas proved in~\cite{JL2-regularity}.  
\begin{lem} \emph{\cite[Lemma 3.8]{JL2-regularity}} \label{l:ABBQL21} 
Let $A, B, \ti B$ be as in Lemma~\ref{l:ABB}, and let $\nu= \lam/ \mu \ll 1$. Then, 
\EQ{ \label{eq:ABBQL21} 
 \| r^{-1} [\La Q_\la]^2 \La Q_\mu  \|_{L^2} +
    \| r^{-1} \La Q_\la [\La Q_\mu]^2 \|_{L^2}  +
    \| r^{-1} [\La Q_\la]^2 A_\mu \|_{L^2}  &\lesssim \nu^k  \\
  \|r^{-1}[ \La Q_\la]^2 \ti B_\mu \|_{L^2} &\lesssim  \nu^{k-o(1)}  \\
   \| r^{-1} [\La Q_\mu]^2 A_\la \|_{L^2}    +
 \| r^{-1} [\La Q_\mu]^2B_\la \|_{L^2} &\lesssim \nu^{k-2}  \\
  \|r^{-1} \La Q_\la [A_\mu]^2 \|_{L^2}+ 
   \| r^{-1} \La Q_\la [\ti B_\mu]^2 \|_{L^2} +
 \| r^{-1} \La Q_\mu [A_{\la}]^2 \|_{L^2}+ 
 \| r^{-1} \La Q_\mu[ B_\la]^2 \|_{L^2}  &\lesssim  \nu^k 
}
where the $o(1)$ above can be replaced with any small constant. 
\end{lem}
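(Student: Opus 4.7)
My plan is to reduce each of the weighted $L^2$ estimates to a one-variable integral via rescaling, then split the domain of integration at the transition point $s \sim 1/\nu$, where $\nu = \la/\mu \ll 1$ and where the ``far'' bubble's profile switches between its near-zero and near-infinity asymptotic behavior. Since the structure of the argument is the same throughout, it suffices to carry out the computation in a few representative cases; the rest follow from the same template.

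For a typical estimate like $\|r^{-1}[\La Q_\la]^2 \La Q_\mu\|_{L^2}$, the plan is to change variables $s = r/\la$ to convert the square of the norm into
\[
\int_0^\infty [\La Q(s)]^4 \, [\La Q(s\nu)]^2 \, \frac{\ud s}{s},
\]
and then to split at $s = 1/\nu$. On $\{s \le 1/\nu\}$ one uses $\La Q(s\nu) = O((s\nu)^k)$, pulls out $\nu^{2k}$, and observes that the remaining integrand $s^{2k-1}[\La Q(s)]^4$ is integrable on $(0,\infty)$ in view of the explicit formula $\La Q(s) = 2ks^k/(1+s^{2k})$, which is $O(s^k)$ near $0$ and $O(s^{-k})$ near $\infty$. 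On $\{s \ge 1/\nu\}$ one uses $\La Q(s\nu) = O((s\nu)^{-2k})$ and observes that the rapid decay of $[\La Q(s)]^4$ makes this piece strictly subleading. The result is the desired bound $\lesssim \nu^k$.

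The estimates pairing $A_\la$ or $B_\la$ with $[\La Q_\mu]^2$ follow the same template but with modified exponents. The decisive difference is that $A(s), B(s) = O(s^{-k+2})$ at infinity rather than $O(s^{-k})$, so the integrand on $\{s \le 1/\nu\}$ blows up at the upper limit like $(1/\nu)^{2k+4}$, producing the worse squared-norm rate $\nu^{2k-4}$, i.e.\ $\lesssim \nu^{k-2}$ for the $L^2$ norm, as stated.

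The only genuine subtlety is the $\ti B_\mu$ estimate: since $\ti B(s) = O(s^k |\log s|)$ near $s = 0$ by Lemma~\ref{l:ABB}, the same splitting produces an extra factor of order $|\log \nu|^2$ on the $\{s \le 1/\nu\}$ piece (the logarithm is effectively evaluated at the transition scale). This yields squared norm of order $\nu^{2k}|\log \nu|^2$, hence $L^2$ norm $\lesssim \nu^{k-\eps}$ for every $\eps > 0$, matching the stated $\nu^{k-o(1)}$ bound. For the last group of estimates, those of the form $\|r^{-1} \La Q_\star [f_\dagger]^2\|_{L^2}$ with $f \in \{A, B, \ti B\}$, the same template applies, and the hypothesis $k \ge 4$ enters here to guarantee that the integrands like $s^{2k-1} A(s)^4$ (and variants) remain integrable at infinity. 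The main obstacle is thus not conceptual but purely bookkeeping: tracking the correct exponents in each of the eight cases, and carefully isolating the logarithm in the $\ti B$ case.
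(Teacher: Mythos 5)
The general method you propose — rescaling to a one-variable integral in $s = r/\lam$ and splitting at the transition point $s = 1/\nu$, then using the near- and far-field asymptotics of $\Lam Q$, $A$, $B$, $\ti B$ from Lemma~\ref{l:ABB} — is the natural one, and it does produce the estimates on the first three lines of~\eqref{eq:ABBQL21}. Your treatment of the $\nu^{k-2}$ line (where the truncated integral blows up like $\nu^{-2k-4}$ at the upper endpoint) is also correct.

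There is, however, a genuine gap in your handling of the last group at the endpoint $k=4$. You claim that ``the hypothesis $k\ge 4$ enters here to guarantee that the integrands like $s^{2k-1}A(s)^4$ remain integrable at infinity.'' With $A(s)=O(s^{-k+2})$ as $s\to\infty$, one has $s^{2k-1}A(s)^4 = O(s^{-2k+7})$, and this is integrable at infinity if and only if $k>4$. At $k=4$ the integrand is $O(s^{-1})$, so the truncated integral obeys $\int_1^{1/\nu}s^{2k-1}A(s)^4\,\ud s \gtrsim \log(1/\nu)$, and the near-piece contribution to $\| r^{-1}\La Q_\mu[A_\la]^2\|_{L^2}^2$ is $\gtrsim \nu^{2k}\log(1/\nu)$ rather than $\nu^{2k}$. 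Moreover this $\log$ cannot be removed by cancellation: matching $\LL_0(c r^{-k+2}) = 4(k-1)c\,r^{-k}$ against $-\La_0\La Q \sim 2k(k-1)r^{-k}$ shows $A(s)\sim \tfrac{k}{2}s^{-k+2}$ with a definite sign, so the lower bound is real. The same applies to $B_\la$. Thus for $k=4$ your argument delivers only $\nu^{k-o(1)}$ for $\| r^{-1}\La Q_\mu[A_\la]^2\|_{L^2}$ and $\| r^{-1}\La Q_\mu[B_\la]^2\|_{L^2}$, not the stated $\nu^k$. You need to either restrict these two items to $k\ge 5$, or carry a $\sqrt{|\log\nu|}$ loss at $k=4$ and verify that this suffices downstream; the statement as you argue it cannot be reached for $k=4$ by this template. (Whether this points to a slip in your bookkeeping or to an imprecision in the quoted statement of the lemma is something that would require checking against the original proof in~\cite{JL2-regularity}; in any case the integrability claim as written is false at $k=4$.)

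Two smaller points. First, your parenthetical ``the logarithm is effectively evaluated at the transition scale'' is backwards: for $\|r^{-1}[\La Q_\la]^2\ti B_\mu\|_{L^2}$ the weight $[\La Q(s)]^4$ concentrates at $s\sim 1$, where $|\log(s\nu)|\sim|\log\nu|$ is large, whereas at the transition scale $s\sim 1/\nu$ one has $\log(s\nu)\approx 0$. Getting this right matters, because it explains the asymmetry you must reproduce: $\|r^{-1}\La Q_\la[\ti B_\mu]^2\|_{L^2}$ acquires \emph{no} log, precisely because there the dominant contribution shifts to $s\sim 1/\nu$ where the log vanishes. Second, on $\{s\ge 1/\nu\}$ the correct bound is $\La Q(s\nu)=O((s\nu)^{-k})$, not $O((s\nu)^{-2k})$; this is evidently a typo and does not affect your conclusion there.
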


With $\bs \Phi = (\Phi, \dot \Phi)$ defined as in~\eqref{eq:Phi-def1} we have,  
\EQ{ \label{eq:eqPhi} 
&-\De \Phi  + \frac{1}{r^2} f( \Phi) =  \gamma_k \frac{\nu^k}{\lam} \Lam Q_{\U \lam} - \frac{b^2}{\lam} \Lam_0 \Lam Q_{\U \lam}  + \gamma_k  \frac{\nu^k}{\mu}  \Lam Q_{\U \mu} + \frac{a^2}{\mu} \Lam_0 \Lam Q_{\U \mu}  \\
 &\quad -\frac{1}{r^2} \Big( f(Q_\la - Q_\mu)  - f(Q_\la) + f(Q_\mu)  - 4  \big(\frac{r}{ \mu}\big)^k (\La Q_{\la})^2  -  4 \big(\frac{r}{ \la}\big)^{-k} (\La Q_{\mu})^2 \Big)\\
&\quad -\frac{1}{r^2} \Big( f(\Phi) - f(Q_\la - Q_\mu) -  f'(Q_\la- Q_\mu) (( b^2 A_\la + \nu^k B_{\lam}) - (a^2 A_{\mu} + \nu^k \ti B_{\mu}) \Big)  \\
&\quad -\frac{1}{r^2}  \Big(f'(Q_\la - Q_\mu) (( b^2 A_\la + \nu^k B_{\lam}) - (a^2 A_{\mu} + \nu^k \ti B_{\mu}))   \\
&\qquad \qquad \quad - f'(Q_\la) ( b^2 A_\la + \nu^k B_{\lam}) + f'(Q_\mu) (a^2 A_{\mu} + \nu^k \ti B_{\mu})\Big)
}

Next we recall the estimates proved in~\cite{JL2-regularity}.

\begin{lem}  \emph{\cite[Lemma 3.11]{JL2-regularity}}  \label{l:fest1} 
Let $\bs \Phi = ( \Phi, \dot \Phi)$ be defined as in~\eqref{eq:Phi-def1} and let $(\mu, \lam, a, b)$ satisfy $ \nu:= \lam/ \mu \ll 1$ and $ \abs{a}, \abs{b} \ll 1$. 
Then, for $\al = 1, 2, 3$ we have 
\EQ{ \label{eq:ff'L2} 
 & \Big\| r^{-\al}  \Big( f(Q_\la - Q_\mu)  - f(Q_\la) + f(Q_\mu)  - 4  \big(\frac{r}{ \mu}\big)^k (\La Q_{\la})^2  -  4 \big(\frac{r}{ \la}\big)^{-k} (\La Q_{\mu})^2 \Big) \Big\|_{L^2}  \\
  &\qquad \qquad\lesssim \nu^{2k} \lam^{-\al +1}    \\
 &   \Big\| r^{-\al}  \Big( f(\Phi) - f(Q_\la - Q_\mu) -  f'(Q_\la- Q_\mu) (( b^2 A_\la + \nu^k B_{\lam} - (a^2 A_{\mu} + \nu^k \ti B_{\mu})  \Big) \Big \|_{L^2} \\
  &\qquad \qquad \lesssim b^4 \lam^{-\al +1}  + a^4\nu^{\al-1}\lam^{-\al +1}  +\nu^{2k}  \lam^{-\al +1}    \\
&   \Big\| r^{-\al}  \Big(f'(Q_\la - Q_\mu) (( b^2 A_\la + \nu^k B_{\lam} - (a^2 A_{\mu} + \nu^k \ti B_{\mu}) )   \\
& \quad \qquad \qquad - f'(Q_\la) ( b^2 A_\la + \nu^k B_{\lam} + f'(Q_\mu) (a^2 A_{\mu} + \nu^k \ti B_{\mu}) \Big) \Big\|_{L^2} \\
 &\qquad \qquad\lesssim b^2 \nu^{k-1} \lam^{-\al +1} +a^2 \nu^{k+ \al - 2} \lam^{-\al +1}   + \nu^{2k  -1} \lam^{-\al +1} 
} 
 \end{lem}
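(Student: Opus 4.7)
All three estimates are nonlinear remainders built from $f(x)=k^2\sin 2x$ applied to combinations of $Q_\lam$, $Q_\mu$ and the correction profiles $A,B,\ti B$. My plan is to combine exact trigonometric identities for $f$ with the pointwise asymptotics of $Q$ and $\La Q$ at $r=0$ and $r=\infty$, together with the decay estimates for $A,B,\ti B$ supplied by Lemma~\ref{l:ABB}. In each case the $L^2$ norm is evaluated by dyadic decomposition of the radial domain into $\{r\lesssim \lam\}$, $\{\lam\lesssim r\lesssim \mu\}$ and $\{r\gtrsim \mu\}$, followed by the change of variable $r=\lam s$ or $r=\mu s$.

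\textbf{First estimate (bubble interaction).} Start from the exact identity
\[
f(Q_\lam-Q_\mu)-f(Q_\lam)+f(Q_\mu)=4\,\La Q_\lam\,\La Q_\mu\,\sin(Q_\lam-Q_\mu),
\]
obtained from $\sin(A-B)-\sin A+\sin B=4\sin\tfrac{A}{2}\sin\tfrac{B}{2}\sin\tfrac{A-B}{2}$ together with $\La Q=k\sin Q$. In the inner region $r\lesssim \lam$, where $Q_\mu$ is small, expand $\sin(Q_\lam-Q_\mu)$ via the addition formula and insert the Taylor expansions $Q_\mu(r)=2(r/\mu)^k+O((r/\mu)^{3k})$ and $\La Q_\mu(r)=2k(r/\mu)^k+O((r/\mu)^{3k})$. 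This produces a leading contribution proportional to $(r/\mu)^k(\La Q_\lam)^2$ that matches the first subtracted term, with every further contribution being pointwise of size $O(\nu^{2k})$ after the natural rescaling. The outer region $r\gtrsim \mu$ is handled symmetrically by exploiting the $\pi$-periodicity of $f$ through the substitution $\ti Q_\lam:=Q_\lam-\pi$ (which is small for $r\gg \lam$); this produces the second subtracted term $(r/\lam)^{-k}(\La Q_\mu)^2$ by the mirror argument. The intermediate region $\lam\lesssim r\lesssim \mu$ is covered by either expansion. Integrating the pointwise bounds against $r^{1-2\al}\,dr$ in each region yields the claimed $L^2$ estimate.

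\textbf{Second and third estimates.} For the second bound, write $\eta:=b^2 A_\lam+\nu^k B_\lam-a^2 A_\mu-\nu^k\ti B_\mu$. Since $\|f''\|_{L^\infty(\R)}\le 4k^2$, Taylor's theorem gives $|f(\Phi)-f(Q_\lam-Q_\mu)-f'(Q_\lam-Q_\mu)\eta|\lesssim \eta^2$. Expanding $\eta^2$ and invoking Cauchy--Schwarz reduces everything to the four diagonal estimates $\|r^{-\al}A_\lam^2\|_{L^2}\lesssim \lam^{-\al+1}$, $\|r^{-\al}B_\lam^2\|_{L^2}\lesssim \lam^{-\al+1}$, $\|r^{-\al}A_\mu^2\|_{L^2}\lesssim \mu^{-\al+1}=\nu^{\al-1}\lam^{-\al+1}$, and $\|r^{-\al}\ti B_\mu^2\|_{L^2}\lesssim \mu^{-\al+1}$ (with the logarithm in $\ti B$ absorbed into the small $o(1)$ loss). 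Each of these follows directly from the pointwise bounds of Lemma~\ref{l:ABB} by a change of variables. For the third bound, decompose as
\[
[f'(Q_\lam-Q_\mu)-f'(Q_\lam)](b^2A_\lam+\nu^k B_\lam)+[f'(Q_\mu)-f'(Q_\lam-Q_\mu)](a^2A_\mu+\nu^k\ti B_\mu),
\]
and use the identity $f'(X)-f'(Y)=-4k^2\sin(X+Y)\sin(X-Y)$ together with $\La Q=k\sin Q$ to conclude $|f'(Q_\lam-Q_\mu)-f'(Q_\lam)|\lesssim |\La Q_\mu|$ and $|f'(Q_\mu)-f'(Q_\lam-Q_\mu)|\lesssim |\La Q_\lam|$. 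The estimate then reduces to bounding the products $\|r^{-\al}\La Q_\mu\cdot A_\lam\|_{L^2}$, $\|r^{-\al}\La Q_\mu\cdot B_\lam\|_{L^2}$, $\|r^{-\al}\La Q_\lam\cdot A_\mu\|_{L^2}$ and $\|r^{-\al}\La Q_\lam\cdot \ti B_\mu\|_{L^2}$, each computed by the same dyadic decomposition and the bounds $\La Q(r)\sim\min(r^k,r^{-k})$ at the two ends.

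\textbf{Main obstacle.} The delicate step is the first estimate: the claimed $\nu^{2k}$ rate requires an exact cancellation at order $\nu^k$ between the trigonometric expansion of $f(Q_\lam-Q_\mu)-f(Q_\lam)+f(Q_\mu)$ and the two subtracted terms. Keeping track of all next-order corrections in $(r/\mu)^k$ and $(\lam/r)^k$ (including the subleading terms in $Q_\mu(r)$, $\La Q_\mu(r)$ and their mirror images) across the three dyadic regions is the bulk of the work, and the $\pi$-periodicity of $f$ is essential to treat the outer region symmetrically with the inner one. By contrast, the second and third estimates are essentially mechanical consequences of the boundedness of $f''$ together with the decay bounds of Lemma~\ref{l:ABB}.
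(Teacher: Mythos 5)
Your overall strategy (the trigonometric identity for the bubble interaction, Taylor expansion of $f$ for the quadratic remainders, the commutation trick $f'(X)-f'(Y)=-c\,k^2\sin(X+Y)\sin(X-Y)$ for the third bound, all combined with dyadic decomposition and rescaling) is indeed the right one, and the treatment of the second and third estimates is essentially sound. The problem is in the first estimate, at exactly the step you yourself flag as the ``delicate'' one, and it stems from a constant that you assert ``matches'' but never actually compute. A Taylor expansion in the inner region $r\lesssim\lam$ (where $Q_\mu$ is small) gives the leading term
\[
f(Q_\lam-Q_\mu)-f(Q_\lam)+f(Q_\mu)=\big(f'(0)-f'(Q_\lam)\big)\,Q_\mu+O(Q_\mu^2),
\]
and with the paper's stated definition $f(u)=k^2\sin 2u$ one has $f'(0)-f'(Q_\lam)=2k^2(1-\cos 2Q_\lam)=4(\La Q_\lam)^2$, while $Q_\mu\approx 2(r/\mu)^k$, so the leading contribution is $8(r/\mu)^k(\La Q_\lam)^2$ --- not $4(r/\mu)^k(\La Q_\lam)^2$. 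Subtracting the term in the statement then leaves a residual of size $\nu^k$ (not $\nu^{2k}$): for instance at $r=\lam$, $\mu=1/\nu$, the quantity in the first norm is $\approx 4k^2\nu^k$ pointwise, so the claimed $L^2$ bound $\nu^{2k}\lam^{-\al+1}$ cannot follow. Your own trig identity $f(Q_\lam-Q_\mu)-f(Q_\lam)+f(Q_\mu)=4\,\La Q_\lam\,\La Q_\mu\,\sin(Q_\lam-Q_\mu)$ encodes the same factor-of-two: it is correct for $f(u)=k^2\sin 2u$, but in that case the leading order does not cancel.

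The resolution is that the paper's displayed definition of $f$ in~\eqref{eq:f-def} is inconsistent with how $f$ is actually used everywhere else: from~\eqref{eq:wmk},~\eqref{eq:DE},~\eqref{eq:P-def}, and~\eqref{eq:LU-def} one must have $f(u)=\tfrac{k^2}{2}\sin 2u$, so that $f'=k^2\cos 2u$ and $P=r^{-2}(f'(Q)-k^2)=-2k^2\sin^2 Q/r^2$ as stated. With this $f$ the identity becomes $f(Q_\lam-Q_\mu)-f(Q_\lam)+f(Q_\mu)=2\,\La Q_\lam\,\La Q_\mu\,\sin(Q_\lam-Q_\mu)$, the coefficient from the Taylor expansion is $f'(0)-f'(Q_\lam)=2(\La Q_\lam)^2$, and the leading contribution $2(\La Q_\lam)^2\cdot 2(r/\mu)^k=4(r/\mu)^k(\La Q_\lam)^2$ now cancels exactly; the next order then really is $O(\nu^{2k})$ and the stated bound holds. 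This exact-constant bookkeeping is the entire content of the first estimate and cannot be waved through with ``proportional to~...~that matches.'' You would have caught the inconsistency had you actually computed the coefficient, since the statement of the lemma pins it down uniquely. As a minor separate point, for the third bound the crude inequalities $|f'(Q_\lam-Q_\mu)-f'(Q_\lam)|\lec|\La Q_\mu|$ and $|f'(Q_\mu)-f'(Q_\lam-Q_\mu)|\lec|\La Q_\lam|$ appear to give only $a^2\nu^k\lam^{-2}$ for the $\La Q_\lam\cdot A_\mu$ contribution at $\al=3$, whereas the claimed rate is $a^2\nu^{k+1}\lam^{-2}$; you may need the sharper pointwise bound $|f'(Q_\mu)-f'(Q_\lam-Q_\mu)|\lec|\La Q_\lam||\La Q_\mu|+(\La Q_\lam)^2$ and some additional care there.
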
 
 
  \begin{lem} \emph{\cite[Lemma 3.13]{JL2-regularity}} \label{l:w^2est} 
 Let $ \bs w \in \HH \cap \HH^2$, let $\bs \Phi = ( \Phi, \dot \Phi)$ be defined as in~\eqref{eq:Phi-def1}, and  let $(\mu, \lam, a, b)$ satisfy $ \nu:= \lam/ \mu \ll 1$ and $ \abs{a}, \abs{b} \ll 1$. 
Then, 
\begin{align} 
  \left\| \frac{1}{r} \Big( f( \Phi + w) - f( \Phi) - f'(\Phi) w \Big) \right\|_{L^2} &\lesssim   \|  w \|_{H}^2     \label{eq:w^2L21} \\
 \end{align} 
 \end{lem}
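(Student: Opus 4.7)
The plan is to exploit the uniform boundedness of $f$ and its derivatives, combined with a standard $L^\infty$ embedding for equivariant profiles, to reduce everything to a trivial algebraic estimate. Since $f(u) = k^2 \sin(2u)$, we have $\|f''\|_{L^\infty(\R)} \le 4k^2$. Taylor's theorem with integral remainder therefore gives the \emph{pointwise} bound
$$
\bigl| f(\Phi + w) - f(\Phi) - f'(\Phi)\, w \bigr| = \left| \int_0^1 (1-s)\, f''(\Phi + sw)\, w^2\, ds \right| \;\le\; 2 k^2\, w(r)^2,
$$
uniformly in $r>0$ and, crucially, independently of $(\mu, \lam, a, b)$ and of the pointwise size of $\Phi$. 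Note that no structural information about $\bs \Phi$ is used; only the bound on $f''$.

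The claim therefore reduces to the single scalar inequality $\|r^{-1} w^2\|_{L^2(r\,dr)} \lesssim \|w\|_H^2$. I would handle this through the Sobolev-type embedding $\|w\|_{L^\infty(0,\infty)} \lesssim \|w\|_H$ valid for $k$-equivariant profiles with $k \ge 1$. Assuming this bound, splitting $w^4/r^2 = w^2 \cdot (w^2/r^2)$ and using the Hardy weight built into the $H$ norm yields
$$
\int_0^\infty \frac{w^4}{r^2}\, r\, dr \;\le\; \|w\|_{L^\infty}^2 \int_0^\infty \frac{w^2}{r^2}\, r\, dr \;\lesssim\; \|w\|_H^2 \cdot \frac{1}{k^2}\|w\|_H^2,
$$
which is precisely the desired estimate after taking square roots and combining with the pointwise bound from the first paragraph.

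To establish the $L^\infty$ embedding, the hypothesis $\bs w \in \HH \cap \HH^2$ ensures that $w$ is continuous with $w(0) = 0 = \lim_{r \to \infty} w(r)$. The fundamental theorem of calculus and Cauchy--Schwarz then give, for every $r > 0$,
$$
w(r)^2 = 2 \int_0^r w(s)\, \partial_s w(s)\, ds \;\le\; 2 \left( \int_0^\infty \frac{w(s)^2}{s^2}\, s\, ds \right)^{\!1/2} \left( \int_0^\infty (\partial_s w)^2\, s\, ds \right)^{\!1/2} \;\le\; \frac{2}{k}\, \|w\|_H^2,
$$
which is the sought embedding.

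The expected main difficulty here is essentially cosmetic, not conceptual: the argument uses nothing specific about $\bs\Phi$, and the implicit constant is uniform in $(\mu, \lam, a, b)$. The only delicate point is to justify the pointwise vanishing of $w$ at $r = 0$ and $r = \infty$ so that the fundamental theorem of calculus actually applies at the endpoints; this is automatic under the $\bs w \in \HH \cap \HH^2$ hypothesis through standard Sobolev embedding (and in fact in this equivariant setting the pointwise decay follows already from $w \in H$ via the same integration-by-parts identity applied to any smooth truncation, so the $\HH^2$ regularity is only used here to make the manipulations unambiguous).
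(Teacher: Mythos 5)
Your proof is correct, and it takes the natural route for this kind of estimate: since $f(u) = k^2\sin(2u)$ has $\|f''\|_{L^\infty}\le 4k^2$, the Taylor remainder is bounded pointwise by $2k^2 w^2$ with no structural input from $\bs\Phi$ at all, and the remaining step $\|r^{-1}w^2\|_{L^2}\lesssim\|w\|_H^2$ is the combination of the $L^\infty$ embedding $\|w\|_{L^\infty}^2\le \frac{2}{k}\|w\|_H^2$ (proved by the standard FTC/Cauchy--Schwarz argument, valid by density of $C_0^\infty((0,\infty))$ in $H$) with the Hardy term $\int_0^\infty \frac{w^2}{r^2}\,r\,\ud r\le \frac{1}{k^2}\|w\|_H^2$ built into the $H$ norm. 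This is the same argument as in the cited lemma of the companion paper; your observation that the hypotheses on $\nu$, $a$, $b$ are not actually used in this particular estimate (they appear because the lemma is stated alongside others that do use them) is also accurate. One could quibble that the clean FTC step formally requires $w(0)=0$, but you correctly note this is automatic by density for $w\in H$, so the $\HH^2$ hypothesis is not needed here.
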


\section{Refined Modulation Analysis}  \label{s:unique} 
Let $\bs u_c(t)$ be the solution constructed  in Theorem~\ref{t:refined} that approaches a $2$-bubble in forward time.  Let $\bs u(t) \in \HH$ be \emph{any two-bubble in forward time} as in~\eqref{eq:2-bub-def}. The goal of this paper is to prove Theorem~\ref{t:main} by showing that $\bs u(t) = \bs u_c(t)$. 


Since $\bs u(t)$ is a 2-bubble in forward time we know from~\eqref{eq:2-bub-def} that 
\EQ{
\hbfd (\bs u(t)) :=  \inf_{\sigma, \mu>0} \| \bs u(t) - (\bs Q_{\sigma \mu} - \bs Q_{\mu}) \|_{\HH}^2 + \sigma^k, 
}
satisfies 
\EQ{ \label{eq:hatd-0} 
\hbfd( \bs u(t)) \to 0 \mas t \to \infty. 
}
Note that here we have fixed the sign $\iota = +1$ in~\eqref{eq:2-bub-def} without loss of generality. In fact, we know more than this (see \cite[Theorem 1.6 and Remark 1.7]{JL1}), but the above is all we require in the sequel. 
In~\cite{JL1} we used the smallness of $\bfd(\bs u)$ to modulate around the $2$-parameter family of pure two bubbles $\bs Q_{\s \mu} - \bs Q_\mu$, that, is we imposed orthogonality conditions on the difference 
\EQ{
 \bs u(t) -  \bs Q_{\mu \s} -   \bs Q_\mu
}
by modulating in $\s$ and $\mu$. In contrast, here we modulate around the $2$-parameter family of maps given by the rescaled  \emph{trajectory} of the solution $\bs u_c(t)$ from Theorem~\ref{t:refined}, the two parameters being time $t$ and the scale $\mu$.  This is natural  in  that this trajectory is \emph{invariant} in the sense of dynamical systems.

Let $\bs u_c(t)$ be the solution given by Theorem~\ref{t:refined}. Note that that $\la_c(t)$ is monotone decreasing on the interval $[T_0, \infty)$. By Theorem~\ref{t:refined}  we have that 
\EQ{
\hbfd( \bs u_c(t)) \le \eta_0(T_0), \quad \forall t \in [T_0, \infty)
}
where $\eta_0 = \eta_0(T_0) \to 0 \mas  T_0 \to \infty$ is a constant that we can fix later to be as small as we like. 

\begin{defn}  Let $\s_0 = \la_c(T_0)$ and define the inverse function 
\EQ{
\la^{-1}_c: (0, \s_0] \to [T_0, \infty)
}
i.e., we can express each $t \in [T_0, \infty)$ uniquely by $t = \la_c^{-1}(\s)  $ for some $\s \in (0, \s_0]$. 

Define 
\EQ{ \label{eq:Udef} 
\bs U( \mu, \s, \cdot) :=  \bs u_c( \la_c^{-1}( \s), \cdot)_{\mu}  = ( u_c( \la_c^{-1}(\s),  \cdot/ \mu), \frac{1}{\mu} \p_t u_{c}( \la^{-1}_c(\s), \cdot/ \mu))
}
Then $\bs U$ defines a mapping, 
\EQ{
(0, \infty) \times (0, \s_0]  \ni ( \mu, \s)  \mapsto \bs U( \mu, \s, \cdot)  \in \HH
}
\end{defn} 

In fact, since the constructed solution has the threshold energy $\E = 2 \E(\bs Q)$, we can can view  $\bs U( \mu, \s)$ as a $2$-dimensional invariant (under the wave map flow) sub-manifold of $\{ \E = 2 \E (\bs Q)\} \subset \HH$, i.e, 
\EQ{
( \mu, \s) \mapsto \bs U( \mu, \s) \subset \{ \E = 2 \E( \bs Q)\} \subset \HH
}
\subsection{Consequences of Theorem~\ref{t:refined}} 
In this section we establish a collection of estimates on $\bs U(\mu, \s)$ that will be needed in the proof of Theorem~\ref{t:main}. 
We  write 
\EQ{
\bs U( \mu, \s) =: ( U( \mu, \s) , \, \dot U(\mu, \s)).
}
Introducing the notation, 
\EQ{
 \xi( \s) := - \la'_c( \la_c^{-1}(\s))
}
we record formulas for the $\mu, \s$ derivatives of $\bs U(\mu, \s)$. 
\begin{lem} 
Let $\bs U: (0, \infty) \times (0, \eta) \to \HH$ be defined as above with $\eta \le \s_0$. Then, 
\begin{align}
 \label{eq:pmuU} &\p_\mu \bs U ( \mu, \s) = -\frac{1}{\mu} \bs \La \bs U( \mu, \s) := -\frac{1}{\mu} \left( [\La u_c( \la_c^{-1}(\s), \cdot)]_{\mu}, \, [\La_0 \p_t u_c( \la_c^{-1}(\s), \cdot)]_{\U \mu}\right) \\
 \label{eq:psU}& \p_\s \bs U(\mu, \s) = -\frac{\mu}{\xi(\s)} J  \circ \uD \E( \bs U( \mu, \s)) = -\frac{\mu}{ \xi(\s)}  \pmat{ \dot U(\mu, \s) \\ \De U(\mu, \s) - \frac{1}{r^2} f( U( \mu, \s)) }
\end{align} 
\end{lem}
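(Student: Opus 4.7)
The plan is to prove both identities by direct chain-rule computation from the definition $\bs U(\mu, \s) = \bs u_c(\la_c^{-1}(\s))_\mu$. The content of the lemma is simply that the $\mu$-direction along $\M$ implements the dilation action (with infinitesimal generator $\bs \La$), while the $\s$-direction implements the Hamiltonian flow of $\E$, up to the time-reparameterization factor arising from inverting $\la_c$.

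The one prerequisite I will need is that $\la_c^{-1}$ is $C^1$. This follows from the quantitative asymptotics \eqref{eq:mod-est}--\eqref{eq:mod'} in Theorem~\ref{t:refined}, which give $\la_c'(t) = -b_c(t) + O(t^{-2(2k-1)/(k-2)})$ with $b_c(t) \sim \frac{2q_k}{k-2}\,t^{-k/(k-2)} > 0$. After enlarging $T_0$ if necessary, $\la_c: [T_0, \infty) \to (0, \s_0]$ is a $C^1$ diffeomorphism, and the inverse function theorem yields $(\la_c^{-1})'(\s) = 1/\la_c'(\la_c^{-1}(\s)) = -1/\xi(\s)$, which is the source of the prefactor $-\mu/\xi(\s)$ appearing in \eqref{eq:psU}.

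For \eqref{eq:pmuU} I will differentiate the two components of $\bs U$ in $\mu$ directly. The first component $u_c(\la_c^{-1}(\s), r/\mu)$ produces $-\mu^{-2} r\, \p_y u_c(\la_c^{-1}(\s), y)|_{y = r/\mu}$, which rewrites as $-\mu^{-1}[\La u_c(\la_c^{-1}(\s), \cdot)]_\mu(r)$ via $\La = r\p_r$ and the $H$-scaling $w_\mu(r) = w(r/\mu)$. The second component $\mu^{-1} \p_t u_c(\la_c^{-1}(\s), r/\mu)$ picks up two contributions, one from the prefactor and one from the spatial argument; the product rule assembles these into $-\mu^{-1}[\La_0 \p_t u_c(\la_c^{-1}(\s), \cdot)]_{\U\mu}(r)$, using $\La_0 = 1 + r\p_r$ and the $L^2$-scaling $w_{\U\mu}(r) = \mu^{-1} w(r/\mu)$.

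For \eqref{eq:psU} the chain rule in $\s$ pulls out the factor $(\la_c^{-1})'(\s) = -1/\xi(\s)$. The first component of $\p_\s \bs U$ immediately becomes $-\xi(\s)^{-1} \p_t u_c(\la_c^{-1}(\s), r/\mu)$, which upon using $\dot U(\mu,\s) = \mu^{-1} \p_t u_c(\la_c^{-1}(\s), r/\mu)$ is the first slot of $-\mu\xi(\s)^{-1} J \circ \uD\E(\bs U)$. For the second component I will differentiate $\mu^{-1}\p_t u_c(\la_c^{-1}(\s), r/\mu)$ in $\s$ to produce a $\p_t^2 u_c$, then invoke the Hamiltonian form \eqref{eq:uham}--\eqref{eq:DE} of \eqref{eq:wmk} to replace $\p_t^2 u_c$ by $\De u_c - r^{-2} f(u_c)$; a final scaling reckoning ($\De [u_c(\cdot/\mu)](r) = \mu^{-2}(\De u_c)(r/\mu)$ and $(r/\mu)^{-2} = \mu^2 r^{-2}$) converts this into $-\mu\xi(\s)^{-1}\bigl[\De U(\mu, \s) - r^{-2} f(U(\mu, \s))\bigr]$, matching the second slot of $-\mu\xi(\s)^{-1} J \circ \uD\E(\bs U)$. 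There is no substantive obstacle; the lemma is an infinitesimal-action identity recording the tangent structure of $\M$, and the only nontrivial input beyond definitions is the strict monotonicity and $C^1$ regularity of $\la_c$ guaranteed by Theorem~\ref{t:refined}.
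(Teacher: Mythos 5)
Your proof is correct and takes essentially the same chain-rule approach as the paper. The only cosmetic difference is that the paper invokes the scaling invariance of the flow once (noting that $\bs{\ti u}(t) := [\bs u_c(t/\mu,\cdot)]_\mu$ solves the Hamiltonian equation~\eqref{eq:uham}) and then applies the chain rule, whereas you verify the same fact by hand at the component level by substituting the equation for $\p_t^2 u_c$ and tracking the factors of $\mu$ through $\De$ and $r^{-2}f$; the two presentations encode identical algebra.
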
 
\begin{proof} The proof of~\eqref{eq:pmuU} is a direct computation using the definition~\eqref{eq:Udef} and the definition of $\bs \La$, i.e, for $\bs V = (V, \dot V)$ we have  $$\bs\La \bs V= ( \La V, \La_0 \dot V).$$ To prove~\eqref{eq:psU} we note that for any $\mu>0$ the mapping 
\EQ{
 \bs {\ti u}(t):=  \bs U( \mu, \la_c(t/\mu)) = [ \bs u_c(t/ \mu, \cdot)]_{\mu}
}
solves~\eqref{eq:uham} on the interval $[\mu T_0, \infty)$, i.e, 
\EQ{
 \p_t \bs {\ti u}(t) = J \circ \uD \E( \bs {\ti u}(t)) \, \quad \forall  \, \, t \in [\mu T_0, \infty).
}
 In particular for $t = \mu \la_c^{-1}(\s)$ we have 
\EQ{
J \circ \uD \E( \bs {\ti u}(  \mu \la_c^{-1}(\s))) =  J \circ \uD \E( \bs U( \mu, \s)), 
}
which is the right-hand-side of~\eqref{eq:psU} up to the factor $-\frac{\mu}{\xi(\s)}$. On the other hand, using the chain-rule 
\EQ{
\p_t \bs {\ti u}(t) \rest_{t =  \mu \la_c^{-1}(\s)}  &= \left[ \p_\s \bs U( \mu, \la_c(t/\mu) )   \la'_c(t/ \mu) \frac{1}{\mu}\right]\rest_{t =  \mu \la_c^{-1}(\s)} \\
& =  \p_\s \bs U( \mu, \s) \la'_c( \la^{-1}_c( \s)) \frac{1}{\mu}   =  - \frac{\xi( \s)}{ \mu} \p_\s \bs U( \mu, \s) 
}
which establishes the claim. 
\end{proof} 

Importantly, Theorem~\ref{t:refined} yields refined regularity and decay information about $\bs U(\mu, \s)$ that will be crucial in the proof of Theorem~\ref{t:main}. 
First we fix notation. We write,  
\EQ{ \label{eq:U-Phi-w} 
\bs U(\mu, \s)  &=  \bs \Phi( \mu, \s) + \bs w_c(\mu, \s)
} 
where we have defined, 
\EQ{ \label{eq:Phi-w-mus}
\bs \Phi( \mu, \s) &:= \bs \Phi( \mu_c( \la_c^{-1}(\s)), \s, a_c( \la_c^{-1}(\s)), b_c( \la_c^{-1}(\s)))_{\mu}  \\
  \bs w_c( \mu, \s)  &:= \bs w_c( \la_c^{-1}(\s))_{\mu} 
}
Here $\bs w_c(\mu, \s) := \bs w_c( \la_c^{-1}(\s))_{\mu} $ is as in  Theorem~\ref{t:refined} and $\bs \Phi( \mu, \la, a, b)$ is defined in~\eqref{eq:Phi-def1}. 
Next, define $\bs g_c( \mu, \s)$  
via, 
\EQ{
\bs g_c( \mu, \s) &:=  \bs \Phi( \mu, \s)  + \bs w_c(\mu, \s)  \\
& \quad - \bs Q_{\mu \s} +  \bs Q_{\mu_c( \la_c^{-1}(\s)) \mu} - (0, b_c(\lam_c^{-1}(\s)) \Lam Q_{\U{\mu \s}} + a_c(\lam_c^{-1}(\s)) \Lam Q_{\U{\mu_c(\lam_c^{-1}(\s)) \mu}})
}
so that we may also write, 
\EQ{ \label{eq:U-Q-g} 
\bs U(\mu, \s)  & = ( Q_{\mu \s}, b_c(\lam_c^{-1}(\s)) \Lam Q_{\U{\mu \s}}) -   ( Q_{\mu_c( \la_c^{-1}(\s)) \mu}, -a_c(\lam_c^{-1}(\s)) \Lam Q_{\U{\mu_c(\lam_c^{-1}(\s)) \mu}})   \\
&\quad + \bs g_c( \mu, \s)
}
First we translate the main estimates from Theorem~\ref{t:refined} to estimates for the error $\bs w_c(\mu, \s)$ and for the parameters $\xi(\s), b_c( \lam_c^{-1}(\s)), \mu_c(\lam_c^{-1}(\s))$ and $a_c( \lam_c^{-1}(\s))$.  
\begin{cor} \label{c:U-est1} Let $\bs U(\mu, \s)$, $\bs w_c(\mu, \s)$, and $\bs g_c(\mu, \s)$ be defined as above. Then, $\bs U(\mu, \s) \in \HH \cap \HH^2 \cap \Lam \HH^2$ and we have the estimates,  
\begin{align} 
 \| \bs w_c( \mu, \s) \|_{\HH} &\lesssim \s^{\frac{3}{2}k -1}  \label{eq:wc-mus-H}\\
  \| \bs w_c( \mu, \s) \|_{\HH^2} &\lesssim  \mu^{-1} \s^{\frac{3}{2} k -2}\label{eq:wc-mus-H2} \\
   \| \bs \Lam \bs w_c( \mu, \s) \|_{  \HH} &\lesssim \s^{k-1} \label{eq:wc-mus-LamH} 
\end{align} 
uniformly in $\s \in (0, \s_0]$ and  $\mu \in (0, \infty)$. We have the asymptotics, 
\EQ{ \label{eq:xis}
\lim_{ \s  \to 0^+} \frac{\xi(\s)}{ \rho_k \s^{\frac{k}{2}}} &=  \lim_{ \s  \to 0^+} \frac{b_c( \la_c^{-1}(\s))}{ \rho_k \s^{\frac{k}{2}}}  = 1, 
}
\EQ{ \label{eq:muc-lim} 
\abs{\mu_c(\la_c^{-1}(\s)) -1} = o(1) \s \mas \s \to 0^+ , 
}
and 
\EQ{ \label{eq:ac-lim} 
\abs{a_c( \la_c^{-1}(\s))} = o(1) \s^{\frac{k}{2}} \mas  \s \to 0^+. 
}
In particular, the above additionally yield the less-refined estimates, 
\begin{align} 
\| \bs g_c(\mu, \s)  \|_{\HH} &\lesssim \s^{k}  \label{eq:gc-H}  \\
\| \bs \Lam \bs g_c(\mu, \s)  \|_{\HH} &\lesssim \s^{k}  \label{eq:gc-LamH}
\end{align} 
where the $o(1)$ in all of the above denotes a constant that can be made as small as we like by taking $T_0$ large enough (and hence $\s_0$ small enough). 
\end{cor}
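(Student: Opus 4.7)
The plan is to translate the estimates in Theorem \ref{t:refined}, formulated for the solution $\bs u_c(t)$ in the time variable $t$, into estimates in the rescaled variables $(\mu,\s)$ via the identity $\bs U(\mu,\s) = \bs u_c(\lam_c^{-1}(\s))_\mu$ together with the relation $\lam_c(\lam_c^{-1}(\s)) = \s$. The first order of business is to verify that the three norms involved behave well under the dilation $\bs v \mapsto \bs v_\mu$. A direct change of variable $r = \mu s$ in the defining integrals shows that $\|\cdot\|_{\HH}$ and $\|\bs\Lam\,\cdot\|_{\HH}$ are scale invariant, while $\|\bs v_\mu\|_{\HH^2} = \mu^{-1}\|\bs v\|_{\HH^2}$; this accounts for the extra factor $\mu^{-1}$ in \eqref{eq:wc-mus-H2}. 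The estimates \eqref{eq:wc-mus-H}--\eqref{eq:wc-mus-LamH} are then immediate consequences of the corresponding bounds on $\bs w_c(t)$ in Theorem \ref{t:refined} upon substituting $\lam_c(t) = \s$.

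To obtain the asymptotics \eqref{eq:xis}--\eqref{eq:ac-lim}, I would invert the leading-order relation $\lam_c(t) = q_k t^{-2/(k-2)}(1 + O(t^{-4/(k-2)+\eps}))$ from \eqref{eq:mod-est} to get $t = \lam_c^{-1}(\s) = q_k^{(k-2)/2}\s^{-(k-2)/2}(1+o(1))$. Differentiating the leading-order expansion of $\lam_c$ yields $\lam_c'(t) = -\tfrac{2}{k-2}q_k t^{-k/(k-2)}(1+o(1))$, whence
\[
\xi(\s) = -\lam_c'(\lam_c^{-1}(\s)) = \tfrac{2}{k-2}q_k^{1-k/2}\s^{k/2}(1+o(1)).
\]
The algebraic identity $\tfrac{2}{k-2}q_k^{1-k/2} = \rho_k$ is immediate from the definition $q_k = (\tfrac{k-2}{2}\rho_k)^{-2/(k-2)}$ in \eqref{eq:q-rho-ga}, establishing \eqref{eq:xis} for $\xi$. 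The limit for $b_c(\lam_c^{-1}(\s))$ follows either from the same computation applied to the expansion of $b_c$ in \eqref{eq:mod-est}, or directly from \eqref{eq:mod'}, since $|b_c + \lam_c'| = O(t^{-2(2k-1)/(k-2)})$ is of strictly higher order than $\xi(\s)$. For \eqref{eq:muc-lim} and \eqref{eq:ac-lim}, substituting the asymptotic expression for $t$ into the expansions of $\mu_c$ and $a_c$ gives $\mu_c(\lam_c^{-1}(\s)) - 1 = O(\s^2) = o(\s)$ and $a_c(\lam_c^{-1}(\s)) = O(\s^{(k+2)/2}) = o(\s^{k/2})$.

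Finally, to deduce the bounds \eqref{eq:gc-H} and \eqref{eq:gc-LamH} on $\bs g_c$, I would unpack \eqref{eq:U-Phi-w}--\eqref{eq:U-Q-g} and use the explicit formula \eqref{eq:Phi-def1} for $\bs\Phi$. After subtraction of the pure $\bs Q$-type terms carried out in the definition of $\bs g_c$, the remainder consists of $\bs w_c(\mu,\s)$ plus a finite sum of rescaled correction terms, each of the form (coefficient)$\cdot$(scaled profile), where the coefficients are monomials in $b_c, a_c, \nu$ and the profiles involve $A, B, \ti B$ or their images under $\Lam$ or $\Lam_0$. Lemma \ref{l:ABB} places each such profile in the appropriate scale-invariant space: $A, B, \Lam A, \Lam B$ (and the analogous terms for $\ti B$) lie in $H$ by direct inspection of the pointwise estimates \eqref{eq:Aest}--\eqref{eq:tiBest}, while $\Lam_0\Lam A, \Lam_0\Lam B, \Lam_0\Lam\ti B \in L^2$ covers the action of $\bs\Lam$ on the second component. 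The leading coefficients satisfy $b_c^2, \nu^k = O(\s^k)$, with the remaining terms contributing strictly smaller powers of $\s$ for $k \ge 4$. Combining these with the already-established bounds \eqref{eq:wc-mus-H} and \eqref{eq:wc-mus-LamH} on $\bs w_c(\mu,\s)$ yields the two claimed less-refined estimates.

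The main practical obstacle is the bookkeeping: tracking each of the roughly dozen correction terms in \eqref{eq:Phi-def1}, keeping careful account of the two distinct scales $\mu\s$ and $\mu\mu_c$ produced by the dilation by $\mu$, and extracting the exponent of $\s$ in each contribution. None of these steps requires new analytical input beyond the scaling properties of the norms, Theorem \ref{t:refined}, and Lemma \ref{l:ABB}.
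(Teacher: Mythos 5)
The paper itself does not supply a proof of Corollary~\ref{c:U-est1}: it is stated without proof immediately after Theorem~\ref{t:refined} and is evidently intended as a direct translation of that theorem into the $(\mu,\s)$ variables. Your proposal supplies exactly the (routine) details: the scaling identities $\|\bs v_\mu\|_{\HH}=\|\bs v\|_{\HH}$, $\|\bs v_\mu\|_{\HH^2}=\mu^{-1}\|\bs v\|_{\HH^2}$, and $\|\bs\Lam\bs v_\mu\|_{\HH}=\|\bs\Lam\bs v\|_{\HH}$; the substitution $\s=\lam_c(t)$ into the bounds \eqref{eq:w-est}; the inversion $t=\lam_c^{-1}(\s)\sim q_k^{(k-2)/2}\s^{-(k-2)/2}$ fed into \eqref{eq:mod-est} and \eqref{eq:mod'} to get \eqref{eq:xis}--\eqref{eq:ac-lim}, using the algebraic identity $\tfrac{2}{k-2}q_k^{1-k/2}=\rho_k$ from \eqref{eq:q-rho-ga}; and the term-by-term bound on $\bs g_c$ via \eqref{eq:Phi-def1} and Lemma~\ref{l:ABB}, with $b_c^2$, $\nu^k$ of size $\s^k$ dominating and $\bs w_c$ contributing $O(\s^{(3k-2)/2})=o(\s^k)$. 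I checked your intermediate exponents ($\mu_c-1=O(\s^2)$, $a_c=O(\s^{(k+2)/2})$) and the scaling factor in the $\HH^2$ norm; they are all correct. The only place where you could be slightly more explicit is the claim that each rescaled profile lies in the appropriate space when checking \eqref{eq:gc-LamH}: Lemma~\ref{l:ABB} lists $L^2$-membership of $A,\Lam A,\Lam_0\Lam A$ (etc.), but for terms such as $-2\gamma_k\,b_c\nu^k A_{\ula}$ appearing in $\dot\Phi$ one also needs $\Lam_0 A\in L^2$; this follows readily from the pointwise decay $\p_r A=O(r^{-k+1})$ together with $k\ge4$, as you note in spirit by appealing to \eqref{eq:Aest}--\eqref{eq:tiBest}. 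In short, your proof is correct and coincides with the paper's (implicit) approach.
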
 

\begin{cor}  \label{c:U-est} 
Let $\bs U(\mu, \s)$ be as above. Then, 
\begin{align} 
 \| \frac{1}{\rho_k \s^{\frac{k}{2}}} \dot U(\mu, \s) - \Lam Q_{\U{\mu \s}} \|_{L^2}   &= o(1) \mas \s \to 0  \label{eq:dotU1} \\
 \| \frac{1}{\rho_k \s^{\frac{k}{2}}} \dot U(\mu, \s) - \Lam Q_{\U{\mu \s}} \|_{H}  & = \frac{o(1)}{\mu \s} \mas \s \to 0 \label{eq:dotU-H}  \\ 
 \| r(-\De U(\mu, \s) + r^{-2} f(U(\mu, \s))) \|_{L^2} & \lesssim  \s^{k-1} \mas \s \to 0  \label{eq:eqU}   \\
  \| -\De U(\mu, \s) + r^{-2} f(U(\mu, \s)) \|_{L^2} & \lesssim  \frac{\s^{k}}{ \mu \s}  \mas \s \to 0  \label{eq:eqU-L2}
\end{align} 
Moreover, for any $\bs h \in \HH$ we have, 
\begin{align} 
 \abs{\ang{ D^2 \E(\bs U(\mu, \s))\p_\mu \bs U(\mu, \s) \mid \bs h} } &\le o(1)   \s^{\frac{k}{2} } \frac{\| \bs h \|_{\HH}}{\mu \s}  \label{eq:D2mu}  \\ 
\abs{ \big\langle D^2 \E( \bs U(\mu, \s)) \p_\s \bs U(\mu, \s) - ( 0,  \frac{\s^{\frac{k}{2}}}{\ \rho_k \s} ( \gamma_k \Lam Q_{\U{\mu \s}} - \rho_k^2  \Lam_0 \Lam Q_{\U{\mu \s}})) \mid  \bs h \big\rangle }& \le o(1) \s^{\frac{k}{2}}  \frac{\| \bs h \|_{\HH}}{\s}   \label{eq:D2sig} 
\end{align} 
where $o(1)$ can be replaced with any small constant by taking $\s$ small enough. 
\end{cor}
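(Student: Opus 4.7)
My plan is to derive all six estimates by combining the decomposition $\bs U(\mu, \s) = \bs \Phi(\mu, \s) + \bs w_c(\mu, \s)$ from Corollary~\ref{c:U-est1} with the refined asymptotics~\eqref{eq:xis}--\eqref{eq:ac-lim}, the explicit formulas~\eqref{eq:Phi-def1},~\eqref{eq:eqPhi}, and the technical Lemmas~\ref{l:ABBQL21}, \ref{l:fest1}, \ref{l:w^2est}.

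For~\eqref{eq:dotU1}--\eqref{eq:dotU-H}, I would read off the leading term of $\dot\Phi(\mu, \s)$ from~\eqref{eq:Phi-def1}: after the built-in $\mu$-rescaling it is $b_c(\lam_c^{-1}(\s))\,\Lam Q_{\U{\mu\s}}$, which matches $\rho_k\s^{k/2}\Lam Q_{\U{\mu\s}}$ up to $o(\s^{k/2})$ by~\eqref{eq:xis}. All remaining pieces of $\dot\Phi$ carry an extra factor of $b_c^2$, $a_c$, or $\nu^k$ of size at most $O(\s^k)$ by~\eqref{eq:muc-lim}--\eqref{eq:ac-lim}, and $\dot w_c$ is absorbed via~\eqref{eq:wc-mus-H}--\eqref{eq:wc-mus-H2} together with the scaling identities $\|f_{\U\mu}\|_{L^2} = \|f\|_{L^2}$ and $\|f_{\U\mu}\|_H = \mu^{-1}\|f\|_H$. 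For~\eqref{eq:eqU}--\eqref{eq:eqU-L2} I would write
\[
-\Delta U + r^{-2}f(U) = \big({-}\Delta\Phi + r^{-2}f(\Phi)\big) + \LL_\Phi w_c + r^{-2}\big(f(\Phi + w_c) - f(\Phi) - f'(\Phi)w_c\big),
\]
apply~\eqref{eq:eqPhi} and Lemma~\ref{l:fest1} to the first summand (its four principal pieces contribute at the claimed orders after $\mu$-rescaling, while the three error families are $O(\s^{2k-1})$ or better), bound $\LL_\Phi w_c = -\Delta w_c + r^{-2}f'(\Phi)w_c$ using $|f'(\Phi)|\lesssim 1$, the bound~\eqref{eq:wc-mus-H2}, and the weighted identity $r\p_r^2 w_c = \p_r\Lam w_c - \p_r w_c$ combined with~\eqref{eq:wc-mus-LamH}, and absorb the quadratic remainder via Lemma~\ref{l:w^2est}.

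The Hessian estimates~\eqref{eq:D2mu} and~\eqref{eq:D2sig} follow from the formulas~\eqref{eq:pmuU}--\eqref{eq:psU} and the pairing identity $\ang{D^2\E(\bs U)\bs k \mid \bs h} = \ang{\LL_U k \mid h} + \ang{\dot k \mid \dot h}$. For~\eqref{eq:D2mu}, the commutator identity $\LL_U\Lam U = (2 + \Lam)(-\Delta U + r^{-2}f(U))$ (which follows from $[\Delta,\Lam] = 2\Delta$ and $\Lam(r^{-2}f(U)) = -2r^{-2}f(U) + r^{-2}f'(U)\Lam U$), combined with~\eqref{eq:ibpLa}, converts the first-component pairing into $-\ang{-\Delta U + r^{-2}f(U)\mid \Lam h}$, bounded via~\eqref{eq:eqU} and $\|\p_r h\|_{L^2}\leq \|h\|_H$; the second-component pairing uses~\eqref{eq:ibpLa} for $\Lam_0$ together with the $L^2$-bound $\|\Lam_0\dot U\|_{L^2}\lesssim \s^{k/2}$ coming from the preceding $\dot U$-analysis and~\eqref{eq:wc-mus-LamH}. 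For~\eqref{eq:D2sig}, the second component of $D^2\E(\bs U)\p_\s\bs U$ equals $\tfrac{\mu}{\xi(\s)}(-\Delta U + r^{-2}f(U))$; after multiplication by $\tfrac{\mu}{\xi(\s)}\sim \rho_k^{-1}\s^{-k/2}$, the two $\lam$-scale leading terms of~\eqref{eq:eqPhi} produce exactly the subtracted explicit combination $\tfrac{\s^{k/2}}{\rho_k\s}(\gamma_k\Lam Q_{\U{\mu\s}} - \rho_k^2\Lam_0\Lam Q_{\U{\mu\s}})$, while the $\mu$-scale pieces, the Lemma~\ref{l:fest1}-remainders, and the $w_c$-contribution deliver the residual $o(\s^{k/2-1})\|\dot h\|_{L^2}$.

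The main obstacle is the first component of~\eqref{eq:D2sig}, namely bounding $\ang{\LL_U\dot U\mid h}$ by $o(\s^{k-1}/\mu)\|h\|_H$. A direct Cauchy--Schwarz using~\eqref{eq:dotU-H} is too weak by a factor of $\s^{k/2}$. The argument must exploit the near-kernel property of both leading contributions to $\dot U$: $b_c\Lam Q_{\U{\mu\s}}$ at the inner scale and $a_c\Lam Q_{\U{\mu\mu_c}}$ at the outer scale. Indeed $\LL_{Q_{\mu\s}}\Lam Q_{\mu\s} = 0$, and the identity $f'(\pi - v) = f'(v)$ yields $\LL_{\pi - Q_{\mu\mu_c}} = \LL_{Q_{\mu\mu_c}}$, so $\LL_U$ annihilates both pieces to leading order. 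The residual sizes are governed by $|U - Q_{\mu\s}|\lesssim (r/\mu)^k\sim \s^k$ at the inner scale $r\sim \mu\s$, and by $|U - (\pi - Q_{\mu\mu_c})|\lesssim (\mu\s/r)^k\sim \s^k$ at the outer scale $r\sim \mu\mu_c$. This extra $\s^k$-factor, combined with $\|h/r\|_{L^2}\leq k^{-1}\|h\|_H$ and Lemma~\ref{l:ABBQL21}-type weighted estimates, delivers the required bound.
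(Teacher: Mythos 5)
Your overall strategy matches the paper's: decompose $\bs U = \bs\Phi + \bs w_c$, feed in the refined asymptotics~\eqref{eq:xis}--\eqref{eq:ac-lim} and the estimates~\eqref{eq:wc-mus-H}--\eqref{eq:wc-mus-LamH}, apply~\eqref{eq:eqPhi} with Lemmas~\ref{l:fest1} and~\ref{l:w^2est} for~\eqref{eq:eqU}--\eqref{eq:eqU-L2}, and for the Hessian bounds convert the bilinear form into the components $\ang{\LL_U\cdot\mid h} + \ang{\cdot\mid\dot h}$ and extract the leading terms.

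The genuine deviation is your treatment of the first component of~\eqref{eq:D2mu}. The paper expands $\LL_U\Lam U$ through the near-kernel decomposition~\eqref{eq:U-Q-g}, writing it as $(\LL_U - \LL_{\mu\s})\Lam Q_{\mu\s} + (\LL_U - \LL_{\mu_c\mu})\Lam Q_{\mu_c\mu} + \LL_U\Lam g_c$ and estimating each piece. You instead use the exact scaling commutator $\LL_U\Lam U = (\Lam + 2)\big({-}\Delta U + r^{-2}f(U)\big)$ together with~\eqref{eq:ibpLa} to rewrite $\ang{\LL_U\Lam U\mid h} = -\ang{{-}\Delta U + r^{-2}f(U)\mid\Lam h}$, which Cauchy--Schwarz against~\eqref{eq:eqU} and $\|\p_r h\|_{L^2}\le\|h\|_H$ then bounds by $\s^{k-1}\|h\|_H$, more than enough. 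This is tidier than the paper's argument and avoids any bookkeeping over scales; the tradeoff is that it presupposes~\eqref{eq:eqU}, which you correctly prove first. It does not, however, help with $\ang{\LL_U\dot U\mid h}$ in~\eqref{eq:D2sig}, since no such commutator identity is available for $\dot U$, and there you fall back — as the paper does — on the near-kernel expansion.

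One point in your treatment of $\ang{\LL_U\dot U\mid h}$ deserves sharpening. You estimate the residual by the linear pointwise bound $|f'(U) - f'(Q_{\mu\s})|\lesssim|U - Q_{\mu\s}|\lesssim(r/\mu)^k$, but feeding this directly into the weighted $L^2$ norm $\|r^{-1}(r/\mu)^k\Lam Q_{\mu\s}\|_{L^2}$ produces a logarithmically divergent integral over the transition region $\mu\s\ll r\ll\mu$, since $\rho^{2k-1}[\Lam Q(\rho)]^2\sim\rho^{-1}$ as $\rho\to\infty$. The paper avoids this by using the product structure of $\cos A - \cos B = -2\sin\tfrac{A+B}{2}\sin\tfrac{A-B}{2}$, which yields $|f'(Q_{\mu\s} - Q_{\mu_c\mu}) - f'(Q_{\mu\s})|\,\Lam Q_{\mu\s}\lesssim\Lam Q_{\mu_c\mu}^2\,\Lam Q_{\mu\s} + \Lam Q_{\mu_c\mu}\,\Lam Q_{\mu\s}^2$; the extra factor of $\Lam Q_{\mu\s}$ is exactly what makes the Lemma~\ref{l:ABBQL21} estimates applicable without a logarithmic loss. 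You do invoke ``Lemma~\ref{l:ABBQL21}-type weighted estimates,'' which suggests you have the right ingredients in mind, but as written the linear pointwise bound is not strong enough; the two-factor pointwise estimate is essential and should be made explicit.
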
 

Before proving Corollary~\ref{c:U-est} it will be convenient to first  translate estimates for the ansatz $\bs \Phi(\mu, \lam, a, b)$ into estimates for $\bs \Phi(\mu, \s)$. 
\begin{lem}  \label{l:Phi-mus} Let $\bs \Phi(\mu, \s)$ be defined as above. Then, 
\EQ{ \label{eq:Lam0-dotPhi} 
 \| \dot \Phi(\mu, \s)  \|_{L^2} +  \|  \Lam_0  \dot \Phi(\mu, \s) \|_{L^2}  \lesssim \s^{\frac{k}{2}}
}
\EQ{\label{eq:dotPhi-bLamQ} 
\| \dot \Phi(\mu, \s) - b_c( \la_c^{-1}(\s))  \Lam Q_{\U{\mu \s}} \|_{L^2} & \lesssim o(1) \s^{\frac{k}{2}}  \\
\abs{\ang{ \Lam Q_{\U{\mu \s}} \mid  \dot \Phi(\mu, \s) - b_c( \la_c^{-1}(\s))  \Lam Q_{\U{\mu \s}}} }&\lesssim o(1) \s^{\frac{3}{2}k -1}  
}
\EQ{ \label{eq:dotPhi-bLamQ-H} 
\| \dot \Phi(\mu, \s) - b_c( \la_c^{-1}(\s))  \Lam Q_{\U{\mu \s}} \|_{H} & \lesssim o(1) \frac{\s^{\frac{k}{2}} }{\mu \s} 
}
\EQ{ \label{eq:dotPhi-aLamQ} 
\| \dot \Phi(\mu, \s) - a_c( \la_c^{-1}(\s))  \Lam Q_{\U{\mu  \mu_c( \la_c^{-1}(\s))}}  \|_{L^2} & \lesssim \s^{\frac{k}{2}}  \\
\abs{\ang{ \Lam Q_{\U \mu} \mid \dot \Phi(\mu, \s) - a_c( \la_c^{-1}(\s))  \Lam Q_{\U{\mu  \mu_c( \la_c^{-1}(\s))}}} }& \lesssim  \s^{\frac{3}{2} k - 1} 
}
\EQ{ \label{eq:eqPhi-est} 
 \| r ( \De \Phi(\mu, \s) - r^{-2} f( \Phi(\mu, \s))) \|_{L^2}  \lesssim  \s^k 
}
\EQ{ \label{eq:eqPhi-est1} 
 \|  \De \Phi(\mu, \s) - r^{-2} f( \Phi(\mu, \s)) \|_{L^2}  \lesssim  \frac{\s^k}{\mu \s}  
}
where the $o(1)$ in all of the above denotes a constant that can be made as small as we like by taking $T_0$ large enough (and hence $\s_0$ small enough). 
\end{lem}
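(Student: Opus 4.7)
The plan is to prove each estimate by directly expanding $\bs\Phi(\mu,\s)$ via the ansatz~\eqref{eq:Phi-def1}, substituting the four modulation parameters $(\mu_c(\lam_c^{-1}(\s)), \s, a_c(\lam_c^{-1}(\s)), b_c(\lam_c^{-1}(\s)))$, and estimating term by term using Lemma~\ref{l:ABB}, Corollary~\ref{c:U-est1}, and (for~\eqref{eq:eqPhi-est}--\eqref{eq:eqPhi-est1}) the identity~\eqref{eq:eqPhi} combined with Lemma~\ref{l:fest1}. I would first reduce to the case $\mu=1$ via the scaling identities $\|f_{\U\mu}\|_{L^2}=\|f\|_{L^2}$, $\|f_{\U\mu}\|_H=\mu^{-1}\|f\|_H$, $\|r\,g(\cdot/\mu)\|_{L^2}=\mu^2\|r g\|_{L^2}$, and the commutations $\La(f_{\U\mu})=(\La f)_{\U\mu}$ and $\La_0(f_{\U\mu})=(\La_0 f)_{\U\mu}$, which match exactly the $\mu$-weights on the right-hand sides of the claimed estimates. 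After this reduction the inner scales are $\lam=\s$ and $\mu_{\text{in}}=\mu_c(\lam_c^{-1}(\s))=1+o(\s)$, with $\nu\sim\s$, $b_c\sim\rho_k\s^{k/2}$, and $|a_c|\sim\s^{(k+2)/2}$ by Theorem~\ref{t:refined} and~\eqref{eq:xis}--\eqref{eq:ac-lim}.

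For~\eqref{eq:Lam0-dotPhi}, every term in~\eqref{eq:Phi-def1} for $\dot\Phi$ is a scalar times a scaled copy of one of the functions $\La Q, A, \La A, B, \La B, \ti B, \La\ti B$, all of which lie in $L^2$ by Lemma~\ref{l:ABB}. Since the $L^2$-norm and the action of $\La_0$ both commute with $\U{}$-scaling, each such copy has a scale-invariant $L^2$-norm (and likewise for its $\La_0$-image). The leading term $b_c\La Q_{\U\la}$ has size $\lesssim \s^{k/2}$, and every other contribution carries strictly higher powers of $a, b, \nu$, yielding the estimate.

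For~\eqref{eq:dotPhi-bLamQ} I would subtract the leading term $b_c\La Q_{\U\la}$ and estimate the residue in $L^2$ by power counting in $a, b, \nu$. The pairing estimate is the main point: after reducing to unit scale by $L^2$-invariance, the three orthogonalities $\ang{A\mid\La Q}=\ang{B\mid\La Q}=\ang{\ti B\mid\La Q}=0$ from~\eqref{eq:ABB-def} annihilate the otherwise dominant contributions from the terms proportional to $b\nu^k A_{\U\la}, b\nu^k B_{\U\la}, a\nu^{k+1}B_{\U\la}, a\nu^k A_{\U\mu}, a\nu^k \ti B_{\U\mu}, b\nu^{k-1}\ti B_{\U\mu}$. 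What remains splits into self-scale terms involving $\La A, \La B$ (each of size $|b|^3+|b|\nu^k \lesssim \s^{3k/2}$) and a single cross-scale pairing $a\,\ang{\La Q_{\U\la}\mid\La Q_{\U{\mu_{\text{in}}}}}$. A direct computation via the substitution $u=r^{2k}$ yields $\ang{\La Q\mid\La Q_{\U{1/\nu}}}\sim\nu^{k-1}$, so this cross-term is of size $|a|\nu^{k-1}\sim\s^{(k+2)/2}\cdot\s^{k-1}=\s^{3k/2}$, comfortably $o(1)\s^{(3/2)k-1}$. The $H$-bound~\eqref{eq:dotPhi-bLamQ-H} is analogous, using $\|f_{\U\la}\|_H=\la^{-1}\|f\|_H$ and the $H$-membership of $\La Q, A, \La A, B, \La B, \ti B$ from Lemma~\ref{l:ABB}. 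The mirror estimates~\eqref{eq:dotPhi-aLamQ} follow the same argument, with the second-bubble leading term $a_c\La Q_{\U{\mu\mu_{\text{in}}}}$ isolated instead.

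For~\eqref{eq:eqPhi-est} and~\eqref{eq:eqPhi-est1} I would apply~\eqref{eq:eqPhi}. The four explicit modulation terms on its first line have coefficients $\gamma_k\nu^k/\la, b^2/\la, \gamma_k\nu^k/\mu, a^2/\mu$ multiplying $\La Q_{\U\la}, \La_0\La Q_{\U\la}, \La Q_{\U\mu}, \La_0\La Q_{\U\mu}$; using $\|r g_{\U\la}\|_{L^2}\lesssim\la$ and $\|g_{\U\la}\|_{L^2}\lesssim 1$ for each such $g$ (norms finite for $k\ge 4$ by the explicit form of $Q$), the total contribution is $\lesssim\nu^k+b^2+a^2\lesssim\s^k$ for~\eqref{eq:eqPhi-est} and $\lesssim\s^k/(\mu\s)$ for~\eqref{eq:eqPhi-est1}. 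The three nonlinear error lines of~\eqref{eq:eqPhi} are controlled by Lemma~\ref{l:fest1} with $\al=1$ and $\al=2$ respectively, producing bounds of the form $\nu^{2k}, b^4, a^4\nu^{\al-1}, b^2\nu^{k-1}, a^2\nu^{k+\al-2}, \nu^{2k-1}$, each of which is $\lesssim\s^k$ (resp.\ $\s^k/(\mu\s)$). The main obstacle is bookkeeping: there are eleven terms in each half of~\eqref{eq:Phi-def1} for $\dot\Phi$, and obtaining the improved exponents in the pairing estimates of~\eqref{eq:dotPhi-bLamQ} and~\eqref{eq:dotPhi-aLamQ} requires systematically exploiting all three orthogonality identities of~\eqref{eq:ABB-def} together with the cross-scale decay $\ang{\La Q\mid\La Q_{\U{1/\nu}}}\sim\nu^{k-1}$; once these are in hand, every estimate reduces to power counting via~\eqref{eq:xis}--\eqref{eq:ac-lim}.
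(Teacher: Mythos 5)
Your overall route matches the paper's: reduce to $\mu=1$ by scale invariance of the norms, expand $\dot\Phi(\mu,\s)$ from~\eqref{eq:Phi-def1}, and power-count using~\eqref{eq:xis}--\eqref{eq:ac-lim}; for~\eqref{eq:eqPhi-est}--\eqref{eq:eqPhi-est1} use~\eqref{eq:eqPhi} (equivalently~\eqref{eq:eqPhi-mus}) together with~\eqref{eq:LLb2T} and Lemma~\ref{l:fest1}. The paper's own proof is terser only because it outsources the cross-scale pairing bounds to \cite[Lemma~3.5]{JL2-regularity}, which you instead reconstruct from the asymptotics of $\Lam Q, A, B, \ti B$; that is a legitimate substitute.

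There is one logical slip you should fix. In the pairing estimate for~\eqref{eq:dotPhi-bLamQ} you claim the orthogonalities $\ang{A\mid\Lam Q}=\ang{B\mid\Lam Q}=\ang{\ti B\mid\Lam Q}=0$ annihilate the contributions of $a\nu^k A_{\U\mu}$, $a\nu^k \ti B_{\U\mu}$, and $b\nu^{k-1}\ti B_{\U\mu}$. They do not: after reducing to $\mu=1$ you are pairing against $\Lam Q_{\U\s}$, i.e. at the \emph{inner} scale, so the orthogonality conditions (which hold only when both factors sit at the same scale) apply to $-2\gamma_k b\nu^k A_{\U\s}$, $-kb\nu^k B_{\U\s}$, $-ka\nu^{k+1}B_{\U\s}$, but not to the outer-scale copies $A_{\U\mu_c}$, $\ti B_{\U\mu_c}$. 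Those three cross-scale pairings are nonzero; they are merely small because $\abs{\ang{\Lam Q_{\U\s}\mid A_{\U{\mu_c}}}}$, $\abs{\ang{\Lam Q_{\U\s}\mid \ti B_{\U{\mu_c}}}}\lesssim\nu^{k-1}$ (which is exactly what the companion Lemma~3.5 records). This matters in particular for $b\nu^{k-1}\ti B_{\U{\mu_c}}$: its coefficient already has the borderline size $|b|\nu^{k-1}\sim\s^{3k/2-1}$, so on its own it is not $o(1)\s^{3k/2-1}$, and the extra factor $\nu^{k-1}$ from the cross-scale pairing is genuinely needed. Your numerology still closes once the argument is recast as ``same-scale terms vanish by orthogonality, cross-scale terms gain $\nu^{k-1}$ from the scale mismatch,'' so the conclusion is unaffected, but the reasoning as written is not correct.
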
 
\begin{proof}[Proof of Lemma~\ref{l:Phi-mus}] 
The estimate~\eqref{eq:Lam0-dotPhi} the estimate~\eqref{eq:dotPhi-bLamQ-H}  and the first estimates in~\eqref{eq:dotPhi-bLamQ} and~\eqref{eq:dotPhi-aLamQ} follow directly from the definition of $\dot \Phi(\mu, \s)$ along with the estimates~\eqref{eq:xis},~\eqref{eq:muc-lim}, and~\eqref{eq:ac-lim}. The second estimates in~\eqref{eq:dotPhi-bLamQ} and~\eqref{eq:dotPhi-aLamQ} follow from the same considerations along with~\cite[Lemma 3.5]{JL2-regularity}, which contains the standard estimates regarding the pairings of $A, B, \ti B, \Lam Q$ in $L^2$ at different scales. 

To prove~\eqref{eq:eqPhi-est} and~\eqref{eq:eqPhi-est1} we record the formula, 
\EQ{ \label{eq:eqPhi-mus} 
-\De \Phi(\mu, \s)  &+ \frac{1}{r^2} f( \Phi(\mu, \s))  \\
&=  \gamma_k \frac{\s^k}{\mu_c(\lam_c^{-1}(\s))^k \mu \s} \Lam Q_{\U{\mu \s}} - \frac{b_c(\la_c^{-1}(\s))^2}{\mu \s} \Lam_0 \Lam Q_{\U{\mu \s}}   \\
& + \gamma_k  \frac{\s^k}{\mu_c(\la_c^{-1}(\s))^{k+1} \mu}  \Lam Q_{\U{\mu_c(\lam_c^{-1}(\s)) \mu}} + \frac{a_c(\la_c^{-1}(\s))^2}{\mu_c(\lam_c^{-1}(\s)) \mu} \Lam_0 \Lam Q_{\U{\mu_c(\lam_c^{-1}(\s)) \mu}}  \\
 &-\frac{1}{r^2} \Big( f(Q_{\mu \s}  - Q_{\mu_c(\lam_c^{-1}(\s)) \mu})  - f(Q_{\mu \s} ) + f(Q_{\mu_c(\lam_c^{-1}(\s)) \mu})  \\
 &\qquad \quad   - 4  \big(\frac{r}{ \mu_c(\lam_c^{-1}(\s)) \mu}\big)^k (\La Q_{\mu \s})^2  -  4 \big(\frac{r}{ \mu \s}\big)^{-k} (\La Q_{\mu_c(\lam_c^{-1}(\s)) \mu})^2 \Big)\\
& -\frac{1}{r^2} \Big( f(\Phi) - f(Q_{\mu \s} - Q_{\mu_c(\lam_c^{-1}(\s)) \mu})  \\
&\qquad \quad -  f'(Q_{\mu \s}- Q_{\mu_c(\lam_c^{-1}(\s)) \mu}) \big(b_c(\lam_c^{-1}(\s))^2 T_{\mu \s} - a_c(\lam_c^{-1}(\s))^2 \ti T_{\mu_c(\lam_c^{-1}(\s)) \mu} \big)\Big)  \\
& -\frac{1}{r^2}  \Big(f'(Q_{\mu \s} - Q_{\mu_c(\lam_c^{-1}(\s)) \mu}) \big(b_c(\lam_c^{-1}(\s))^2 T_{\mu \s} - a_c(\lam_c^{-1}(\s))^2 \ti T_{\mu_c(\lam_c^{-1}(\s)) \mu}\big)  \\
&\qquad \quad  - f'(Q_{\mu \s}) b_c(\lam_c^{-1}(\s))^2 T_{\mu \s}  + f'(Q_{\mu_c(\lam_c^{-1}(\s)) \mu}) a_c(\lam_c^{-1}(\s))^2 \ti T_{\mu_c(\lam_c^{-1}(\s)) \mu}\Big)
}
which is a rescaling of~\eqref{eq:eqPhi}. 
The estimate~\eqref{eq:eqPhi-est} now follows from~\eqref{eq:LLb2T} and ~\eqref{eq:ff'L2} with $\al =1$ along with~\eqref{eq:xis},~\eqref{eq:muc-lim}, and~\eqref{eq:ac-lim}. The estimate~\eqref{eq:eqPhi-est1} follows from the same considerations using~\eqref{eq:LLb2T} and ~\eqref{eq:ff'L2} with $\al =2$. 
\end{proof}

\begin{proof} 
First we note that~\eqref{eq:dotU1} is a direct consequence of~\eqref{eq:dotPhi-bLamQ} along with~\eqref{eq:xis} and~\eqref{eq:wc-mus-H}. The estimate~\eqref{eq:dotU-H} is a direct consequence of~\eqref{eq:dotPhi-bLamQ-H} along with~\eqref{eq:xis} and~\eqref{eq:wc-mus-H2}. 

To prove~\eqref{eq:eqU} and~\eqref{eq:eqU-L2} we use the decomposition~\eqref{eq:U-Phi-w} to write, 
\EQ{
-\De U(\mu, \s) &+ r^{-2}f(U(\mu, \s))   = -\De \Phi(\mu, \s)  + r^{-2} f( \Phi(\mu, \s)) \\
 &  + r^{-2} ( f(\Phi(\mu, \s) + w_c(\mu, \s)) - f( \Phi(\mu, \s)) - f'( \Phi(\mu, \s)) w_c(\mu, \s) )  \label{eq:eqU1}  \\
& - \De w_c(\mu, \s) + r^{-2} f'( \Phi(\mu, \s)) w_c(\mu, \s)  
}
First we prove~\eqref{eq:eqU}. For the first line in~\eqref{eq:eqU1} we apply~\eqref{eq:eqPhi-est}.  
For the second line we use~\eqref{eq:w^2L21} along with~\eqref{eq:wc-mus-H} to obtain, 
\EQ{
\| r^{-1} ( f(\Phi(\mu, \s) + w_c(\mu, \s)) - f( \Phi(\mu, \s)) - f'( \Phi(\mu, \s)) w_c(\mu, \s)) \|_{L^2} \lesssim \| w_c(\mu, \s) \|_{H}^2 \lesssim \s^{3k-2}
}
Finally, for the last line in~\eqref{eq:eqU1} we have the estimates, 
\EQ{
\| r \De w_c(\mu, \s) \|_{L^2} \lesssim \| \Lam w_c(\mu, \s) \|_{H} \lesssim \s^{k-1} 
}
which follows from~\eqref{eq:wc-mus-LamH} and 
\EQ{
\| r^{-1} f'( \Phi(\mu, \s)) w_c(\mu, \s) \|_{L^2} \lesssim \| w_c (\mu, \s) \|_{H} \lesssim \s^{\frac{3}{2} k - 1}
}
which follows from~\eqref{eq:wc-mus-H}. This proves~\eqref{eq:eqU}. The proof of~\eqref{eq:eqU-L2} is similar. 

Next, we prove~\eqref{eq:D2mu}. First, using~\eqref{eq:pmuU} we have  
\EQ{ \label{eq:D2-mu1} 
\ang{ D^2\E(\bs U(\mu, \s) \p_\mu \bs U(\mu, \s)  \mid \bs h} 
& = -\frac{1}{\mu} \ang{ \LL_{U(\mu, \s)}   \Lam U(\mu, s) \mid h} - \frac{1}{\mu} \ang{ \Lam_0 \dot U(\mu, \s) \mid \dot h} 
}
To treat the second term on the right above we use~\eqref{eq:U-Phi-w} to estimate, 
\EQ{
\abs{\frac{1}{\mu} \ang{ \Lam_0 \dot U(\mu, \s) \mid \dot h}} &\lesssim \frac{1}{\mu} ( \| \Lam_0 \dot  \Phi(\mu, \s) \|_{L^2} + \| \Lam_0 \dot w_c(\mu, \s) \|_{L^2}) \| \dot h \|_{L^2} 
 \lesssim  \frac{ \s^{\frac{k}{2}}}{\mu} \| \dot h \|_{L^2}
}
where we have used the estimates~\eqref{eq:Lam0-dotPhi} and~\eqref{eq:wc-mus-LamH} in the second inequality above. 
To handle the first term on the right of~\eqref{eq:D2-mu1} we make use of the fact that $\LL \Lam Q =0$ along with the decomposition~\eqref{eq:U-Q-g} to write, 
\EQ{
\LL_{U(\mu, \s)} \Lam U(\mu, \s) &= (\LL_{U(\mu, \s)} - \LL_{\mu\s})\Lam Q_{\mu \s}  + (\LL_{U(\mu, \s)} - \LL_{\mu_c(\lam_c^{-1}(\s)) \mu}) \Lam Q_{\mu_c(\lam_c^{-1}(\s)) \mu}  \\
&\quad +  \LL_{U(\mu, \s)} \Lam g_c(\mu, \s)
}
We use the weighted estimate~\eqref{eq:gc-LamH} to treat the contribution of last term above, 
\EQ{
\abs{ \ang{ \frac{1}{\mu}  \LL_{U(\mu, \s)} \Lam g_c(\mu, \s) \mid h}} \lesssim \frac{1}{\mu}  \| \Lam g_c(\mu, \s) \|_H \| h \|_H \lesssim \frac{\s^{k}}{\mu} \| h \|_H
}
To treat the first term note that, 
\EQ{ \label{eq:LLU-LLmus} 
(\LL_{U(\mu, \s)} - \LL_{\mu\s})\Lam Q_{\mu \s}& = r^{-2} \Big( f'( U(\mu, \s)) -  f'( Q_{\mu \s}) \Big)   \Lam Q_{\mu \s} \\ 
& = r^{-2}  \Big(f'(Q_{\mu \s}- Q_{\mu_c( \lam_c^{-1}(\s)) \mu}) - f'( Q_{\mu \s}) \Big)  \Lam Q_{\mu \s} + r^{-2} O( g_c( \mu, \s) \Lam Q_{\mu \s})
}
Using the pointwise estimate, 
\EQ{
\abs{f'(Q_{\mu \s}- Q_{\mu_c( \lam_c^{-1}(\s)) \mu}) - f'( Q_{\mu \s})}  \Lam Q_{\mu \s} \lesssim  \Lam Q_{\mu_c( \lam_c^{-1}(\s)) \mu}^2 \Lam Q_{\mu \s} + \Lam Q_{\mu_c( \lam_c^{-1}(\s)) \mu} \Lam Q_{\mu \s}^2
}
we deduce that, 
\begin{multline} 
\abs{\frac{1}{\mu} \ang{  r^{-2}  \Big(f'(Q_{\mu \s}- Q_{\mu_c( \lam_c^{-1}(\s)) \mu}) - f'( Q_{\mu \s}) \Big)  \Lam Q_{\mu \s} \mid h }} \\
 \lesssim \frac{1}{\mu}  \Big( \| r^{-1} \Lam Q_{\mu_c( \lam_c^{-1}(\s)) }^2 \Lam Q_{ \s} \|_{L^2} + \| \Lam Q_{\mu_c( \lam_c^{-1}(\s)) } \Lam Q_{ \s}^2 \|_{L^2} ) \| h \|_{H}  
  \lesssim \frac{\s^{k}}{\mu} \| h \|_{H} 
\end{multline} 
We can also use~\eqref{eq:gc-H} to deduce that, 
\EQ{
\abs{\frac{1}{\mu}  \ang{ r^{-2}  g_c( \mu, \s) \Lam Q_{\mu \s} \mid h}} \lesssim \frac{\s^{k}}{\mu} \| h \|_{H} 
}
where in the last line we used~\eqref{eq:muc-lim}. 
And thus, 
\EQ{
\abs{ \frac{1}{\mu}  \ang{ (\LL_{U(\mu, \s)} - \LL_{\mu\s})\Lam Q_{\mu \s} \mid h}} \lesssim  \frac{\s^{k}}{\mu}  \| h \|_H 
}
The second term in~\eqref{eq:D2-mu1} is treated in the same way. This completes the proof of~\eqref{eq:D2mu}. 

Lastly, we prove~\eqref{eq:D2sig}. Using~\eqref{eq:psU} we have 
\EQ{ \label{eq:D2-sig1} 
\ang{ D^2 \E( \bs U(\mu, \s)) \p_\s \bs U(\mu, \s)  \mid  \bs h} &= -\frac{\mu}{\xi(\s)} \ang{ \De U(\mu, \s) - r^{-2} f(U(\mu, \s)) \mid  \dot h} \\
& \quad  - \frac{\mu}{\xi(\s)} \ang{  \LL_{U(\mu, \s)} \dot U(\mu,  \s) \mid h} 
}
For the first term on the right above we recall the decomposition~\eqref{eq:eqU1} to obtain, 
\EQ{
 -\frac{\mu}{\xi(\s)}& \ang{ \De U(\mu, \s) - r^{-2} f(U(\mu, \s)) \mid  \dot h} = -\frac{\mu}{\xi(\s)} \ang{ -\De \Phi(\mu, \s)  + r^{-2} f( \Phi(\mu, \s))  \mid  \dot h}\\
 &  -\frac{\mu}{\xi(\s)} \ang{r^{-2} ( f(\Phi(\mu, \s) + w_c(\mu, \s)) - f( \Phi(\mu, \s)) - f'( \Phi(\mu, \s)) w_c(\mu, \s) )   \mid \dot h}   \\
&  -\frac{\mu}{\xi(\s)}\ang{-\De w_c(\mu, \s) \mid \dot h}   -\frac{\mu}{\xi(\s)}\ang{ r^{-2} f'( \Phi(\mu, \s)) w_c(\mu, \s)  \mid \dot h}  
}
For the first line above we use the expansion~\eqref{eq:eqU} along with the estimates~\eqref{eq:ff'L2} with $\al =2$ and the asymptotics~\eqref{eq:xis},\eqref{eq:muc-lim}, and~\eqref{eq:ac-lim}    to obtain, 
\EQ{ \label{eq:D2-sig-main} 
-\frac{\mu}{\xi(\s)} &\ang{ -\De \Phi(\mu, \s)  + r^{-2} f( \Phi(\mu, \s))  \mid  \dot h} \\ 
&=  \gamma_k \frac{\s^{\frac{k}{2}}}{  \rho_k \s} \ang{\Lam Q_{\U{\mu \s}}  \mid \dot h} - \frac{ \rho_k \s^{\frac{k}{2}}}{ \s} \ang{ \Lam_0 \Lam Q_{\U{\mu \s}} \mid \dot h}  + o(1) \frac{\s^{\frac{k}{2}}}{\s} \|  \dot h \|_{L^2}   \\
}
which reveals the leading order terms that appear on the left-hand side of~\eqref{eq:D2sig}.  
For the next line we have, 
\EQ{
 |\frac{\mu}{\xi(\s)} &\ang{r^{-2} ( f(\Phi(\mu, \s) + w_c(\mu, \s)) - f( \Phi(\mu, \s)) - f'( \Phi(\mu, \s)) w_c(\mu, \s) )   \mid \dot h} |   \\
 & \lesssim  \frac{\mu}{\xi(\s)} \| r^{-2} w_c( \mu, \s)^2 \|_{L^2} \| \dot h \|_{L^2} \lesssim \s^{-\frac{k}{2}} \mu \| w_c(\mu, \s) \|_{H} \| w_c(\mu, \s)\|_{H^2}   \| \dot h \|_{L^2} \lesssim  \s^{ \frac{5}{2}k -3} \| \dot h \|_{L^2} 
}
where in the last line we used~\eqref{eq:wc-mus-H} and~\eqref{eq:wc-mus-H2}. Finally, for the last line we again use~\eqref{eq:wc-mus-H2}  to obtain,  
\EQ{
&\abs{  \frac{\mu}{\xi(\s)}\ang{-\De w_c(\mu, \s) \mid \dot h} } \lesssim \mu \s^{-\frac{k}{2}} \| w_c(\mu, \s) \|_{H^2}  \| \dot h \|_{L^2}  \lesssim \s^{k-1} \frac{\| \dot h \|_{L^2} }{\s}  \\
& \abs{\frac{\mu}{\xi(\s)}\ang{ r^{-2} f'( \Phi(\mu, \s)) w_c(\mu, \s)  \mid \dot h}} \lesssim  \s^{k-1} \frac{\| \dot h \|_{L^2} }{\s}
}
which completes the estimates for the first term in~\eqref{eq:D2-sig1}. To treat the second term in~\eqref{eq:D2-sig1} we expand using the fact that $\LL \Lam Q = 0$ as follows, 
\EQ{ \label{eq:LU-dotU} 
\LL_{U(\mu, \s)}  \dot U(\mu, \s) &=  b_c(\lam_c^{-1}(\s))(\LL_{U(\mu, \s)} - \LL_{\mu \s}) \Lam Q_{\U{\mu \s}}  \\
& \quad + a_c(\lam_c^{-1}(\s))(\LL_{U(\mu, \s)} - \LL_{\mu \mu_c(\lam_c^{-1}(\s))}) \Lam Q_{\U{\mu_c(\lam_c^{-1}(\s)) \mu}}  \\
& \quad + \LL_{U(\mu \s)}  \Big( \dot \Phi(\mu, \s)-  b_c(\lam_c^{-1}(\s)) \Lam Q_{\U{\mu \s}}- a_c(\lam_c^{-1}(\s))\Lam Q_{\U{\mu_c(\lam_c^{-1}(\s)) \mu}}\Big)  \\
& \quad +  \LL_{U(\mu \s)}  \dot w_c(\mu, \s) 
}
The contribution of the last term above to~\eqref{eq:D2sig} is controlled by the estimate~\eqref{eq:wc-mus-H2}, 
\EQ{
\frac{\mu}{\xi(\s)} \abs{ \ang{ \LL_{U(\mu, s)}   \dot w_c(\mu, \s)  \mid h}} \lesssim \mu \s^{-\frac{k}{2}} \|  \dot w_c(\mu, \s) \|_{H} \| h \|_H & \lesssim \s^{k-1} \frac{ \| h \|_H}{\s}
}
Next, consider the first line in~\eqref{eq:LU-dotU}. Using~\eqref{eq:LLU-LLmus},~\eqref{eq:xis} we have, 
\EQ{
\frac{\mu}{\xi(\s)} \Big| b_c(\lam_c^{-1}(\s)) \langle(\LL_{U(\mu, \s)} &- \LL_{\mu \s}) \Lam Q_{\U{\mu \s}}   \mid h \rangle \Big| \lesssim  \frac{\mu}{\mu \s} \| r^{-1}  \Lam Q_{\mu_c( \lam_c^{-1}(\s)) \mu}^2 \Lam Q_{\mu \s} \|_{L^2} \| h \|_H \\
&  + \frac{\mu}{\mu \s}  \| r^{-1}  \Lam Q_{\mu_c( \lam_c^{-1}(\s)) \mu} \Lam Q_{\mu \s}^2 \|_{L^2} \| h \|_H  +  \frac{\mu}{\mu \s}  \| g_c( \mu, \s) \|_H  \| h \|_H \\
& \lesssim  \s^k \frac{ \| h \|_H}{\s}  
}
where the last inequality follows from the estimates~\eqref{eq:ABBQL21} from Lemma~\ref{l:ABBQL21} along with the estimate~\eqref{eq:gc-H}. The contribution of the second line in~\eqref{eq:LU-dotU} to~\eqref{eq:D2sig} is handled similarly. Finally, the estimate, 
\EQ{
\frac{\mu}{\xi(\s)}  \Big| \ang{  \LL_{U(\mu \s)}  \Big( \dot \Phi(\mu, \s)-  b_c(\lam_c^{-1}(\s)) \Lam Q_{\U{\mu \s}}- a_c(\lam_c^{-1}(\s))\Lam Q_{\U{\mu_c(\lam_c^{-1}(\s)) \mu}}\Big) \mid h} \Big| \lesssim \s^k \frac{ \| h \|_H}{\s}
}
follows directly from the definition of $\dot \Phi(\mu, \s)$ along with~\eqref{eq:xis},~\eqref{eq:muc-lim}, and~\eqref{eq:ac-lim}. Plugging the preceding estimates back into~\eqref{eq:D2-sig1} we obtains, 
\EQ{
\ang{ D^2 \E( \bs U(\mu, \s)) \p_\s \bs U(\mu, \s)  \mid  \bs h} &= \gamma_k \frac{\s^{\frac{k}{2}}}{  \rho_k \s} \ang{\Lam Q_{\U{\mu \s}}  \mid \dot h} - \frac{ \rho_k \s^{\frac{k}{2}}}{ \s} \ang{ \Lam_0 \Lam Q_{\U{\mu \s}} \mid \dot h}  + o(1)O( \frac{\s^{\frac{k}{2}}}{\s} \|  \bs h \|_{\HH})
}
as claimed.
\end{proof} 
\subsection{Modulation around $\bs U(\mu, \s)$} 

Next we modulate around $\bs U( \mu, \s)$. 

\begin{lem}[Modulation Lemma] \label{l:mod}  There exists an $\eta_0>0$ small enough so that the following statement holds true.  Let $J$ be a time interval, and let $\bs u: J \to \HH$ be a solution to~\eqref{eq:wmk} such that 
\EQ{
 \hbfd ( \bs u(t)) < \eta  \le \eta_0 \quad \forall t \in J
}
Then there exist $C^{1}(J; (0, \infty))$ functions $\mu(t), \s(t)$ such that defining $\bs g(t) \in \HH$ by 
\EQ{ \label{eq:gdef}
\bs g(t) = \bs u(t) - \bs U( \mu(t), \s(t))
}
we have, for each $t\in J$, 
\begin{align}  
\label{eq:omu} &\ang{\La Q_{\U\mu} \mid g} = 0 \\ 
 \label{eq:ola}  & \ang{\La Q_{\U{\s \mu}} \mid g} = 0
\end{align} 
In addition, there exists a uniform constant $c_1>0$ such that
\EQ{\label{eq:coer} 
\ang{ D^2 \E( \bs U( \mu(t), \s(t)) \bs g(t) \mid \bs g(t)} \ge c_1 \| \bs g(t) \|_{\HH_0}^2 
}
Finally, we have the estimates, 
\begin{align}
\label{eq:dtmu}  \abs{\mu'(t)} &\lesssim    \| \dot g \|_{L^2} + \s^{\frac{k}{2}} \| g \|_{H}  \lesssim  \| \bs g \|_{\HH} \\ 
\label{eq:dtsig}  \Big|\mu(t) \s'(t) + \xi( \s(t)) + \frac{\ang{\La Q_{\U{\la(t)}} \mid \dot g}}{ \|\La Q \|_{L^2}^2} \Big|  &\lesssim     o(1)  \| \dot g \|_{L^2}  +  \s^{\frac{k}{2}} \|g \|_{H} \lesssim o(1)  \| \bs g \|_{\HH}
\end{align} 
where the $o(1)$ term above can be taken as small as we like by taking $\eta>0$ small. 
\end{lem}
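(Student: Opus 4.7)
The plan is to construct $\mu(t), \s(t)$ by applying the implicit function theorem to the system of orthogonality conditions, and then to extract the evolution equations for $\mu'$ and $\s'$ by differentiating those conditions along the wave-map flow.

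\textbf{Step 1: Existence of the modulation parameters.} I define $F: \HH \times (0,\infty) \times (0, \s_0] \to \R^2$ by
\[
F(\bs v, \mu, \s) := \bigl(\ang{\La Q_{\U{\mu}} \mid v - U(\mu, \s)},\; \ang{\La Q_{\U{\s\mu}} \mid v - U(\mu, \s)}\bigr).
\]
Combining $\p_\mu \bs U = -\mu^{-1}\bs\La\bs U$ from~\eqref{eq:pmuU}, $\p_\s U = -\mu\xi(\s)^{-1}\dot U$ from~\eqref{eq:psU}, the decomposition~\eqref{eq:U-Q-g}, the asymptotics $\mu_c \to 1$, $a_c, b_c = O(\s^{\frac{k}{2}})$, $\|\bs g_c\|_{\HH} \lesssim \s^k$ from Corollary~\ref{c:U-est1}, together with $\dot U \approx \rho_k \s^{\frac{k}{2}}\La Q_{\U{\mu\s}}$ in $L^2$ from~\eqref{eq:dotU1}, I compute that at a reference point $\bs v = \bs U(\mu_*,\s_*)$ the Jacobian has the leading-order structure
\[
D_{(\mu,\s)}F \approx \begin{pmatrix} -\|\La Q\|_{L^2}^2 & o(1) \\ O(\s) & \mu\|\La Q\|_{L^2}^2 \end{pmatrix},
\]
whose determinant is bounded below by $\tfrac{1}{2}\mu\|\La Q\|_{L^2}^4$. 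The hypothesis $\hbfd(\bs u(t)) < \eta \le \eta_0$ yields a near-minimizing pair $(\mu_*(t), \s_*(t))$ to serve as a reference; the IFT then supplies $C^1$ functions $\mu, \s$ of $\bs v$ on a $\HH$-neighborhood of $\bs U(\mu_*, \s_*)$. Composing with the $C^1(J; \HH)$ map $t \mapsto \bs u(t)$ produces $\mu(t), \s(t) \in C^1(J;(0,\infty))$ satisfying~\eqref{eq:omu}-\eqref{eq:ola}.

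\textbf{Step 2: Coercivity.} Writing $\ang{\uD^2 \E(\bs U(\mu,\s))\bs g \mid \bs g} = \|\dot g\|_{L^2}^2 + \ang{\LL_{U(\mu,\s)} g \mid g}$ and using~\eqref{eq:U-Q-g} with Corollary~\ref{c:U-est1}, I compare $\LL_{U(\mu,\s)}$ to $\LL_\mu + \LL_{\mu\s} - \LL_0$ with error of order $O(\s^k)$ in the relevant dual norms. Choosing a smooth cutoff at an intermediate scale $R$ with $\mu\s \ll R \ll \mu$, I decompose $g = g_1 + g_2$ into pieces supported near the inner (scale $\mu\s$) and outer (scale $\mu$) bubbles. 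The orthogonality conditions~\eqref{eq:omu}, \eqref{eq:ola} translate, up to $O(\s^k)$ cross-scale corrections, into orthogonality of each piece to $\La Q$ at the corresponding scale. Applying the localized coercivity estimates~\eqref{eq:L-coerce},~\eqref{eq:L-loc1},~\eqref{eq:L-loc2} after rescaling to each piece, and controlling the cross-scale pairings (all $O(\s^k)$), yields~\eqref{eq:coer} for a uniform $c_1 > 0$.

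\textbf{Step 3: Modulation equations.} Differentiating~\eqref{eq:omu}-\eqref{eq:ola} in $t$, substituting $\p_t g = \dot g + \dot U - \mu' \p_\mu U - \s' \p_\s U$ (using that $\bs u$ solves the WM equation, so $\p_t u = \dot u$), and using~\eqref{eq:pmuU}-\eqref{eq:psU}, I obtain, for $\widetilde \s'(t) := \s'(t) + \xi(\s(t))/\mu(t)$, a system
\[
\widetilde M(t) \begin{pmatrix} \mu'(t) \\ \widetilde \s'(t) \end{pmatrix} = -\begin{pmatrix}\ang{\La Q_{\U{\mu}} \mid \dot g} \\ \ang{\La Q_{\U{\s\mu}} \mid \dot g}\end{pmatrix} + \vec R(t),
\]
where $\widetilde M(t)$ coincides to leading order with the Jacobian from Step~1, and $\vec R(t)$ collects lower-order contributions, chief among them $\tfrac{\xi(\s)}{\mu\s}\ang{[\La_0\La Q]_{\U{\s\mu}} \mid g} = O(\s^{\frac{k}{2}}\|g\|_H)$ from time-differentiating the $\La Q_{\U{\s\mu}}$ in~\eqref{eq:ola}. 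Inverting $\widetilde M(t)$ and using $\xi(\s)/\mu \approx \s^{\frac{k}{2}}$ delivers~\eqref{eq:dtmu} and~\eqref{eq:dtsig}, with $\la(t) := \s(t)\mu(t)$.

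The main obstacle lies in Step~3: one must confirm that the precise coefficient $1/\|\La Q\|_{L^2}^2$ multiplying $\ang{\La Q_{\U\la} \mid \dot g}$ in~\eqref{eq:dtsig} is correct and that the leading $-\xi(\s)/\mu$ on the right-hand side of the $\s'$-equation arises exactly from the identity $\tfrac{\mu}{\xi(\s)}\langle \La Q_{\U{\s\mu}} \mid \dot U\rangle \approx \mu\|\La Q\|_{L^2}^2$. This matching is delicate and relies on the refined asymptotics $\xi(\s)/(\rho_k \s^{\frac{k}{2}}) \to 1$, $\mu_c \to 1$, and $a_c = o(\s^{\frac{k}{2}})$ from Corollary~\ref{c:U-est1}, not merely qualitative smallness; it is exactly this matching that makes $\vec R(t) \equiv 0$ when $\bs g \equiv 0$, consistent with $\bs u_c$ being the trajectory $\bs U(1, \la_c(\cdot))$.
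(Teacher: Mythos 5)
Your proposal follows the same three-step strategy as the paper: apply the implicit function theorem to the two orthogonality functionals anchored at a point where they vanish exactly (the paper anchors at $g_c(\la_c^{-1}(\s_1))_{\mu_1} + Q_{\mu_1 \mu_c(\la_c^{-1}(\s_1))} - Q_{\mu_1}$, which corresponds to your reference point $\bs U(\mu_*,\s_*)$); obtain coercivity from localized coercivity at both scales (the paper simply cites \cite[Lemma 5.4]{JJ-AJM} for the pure two-bubble and compares $\cos 2U$ to $\cos 2(Q_{\mu\s}-Q_\mu)$ perturbatively, whereas you sketch re-proving the cited result by cutoff decomposition, but these are the same argument); and derive the modulation equations by differentiating~\eqref{eq:omu}--\eqref{eq:ola} and inverting the resulting $2\times 2$ system, whose entries you correctly identify up to a harmless sign and $\mu$-normalization convention (the paper solves for $(\mu',\,\mu\s'+\xi)$, you for $(\mu',\,\s'+\xi/\mu)$). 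You also correctly flag the two places where the refined asymptotics from Corollary~\ref{c:U-est1} are essential: in showing $\langle \La Q_{\U\la}\mid \mu^{-1}\p_\s U\rangle \to -\|\La Q\|_{L^2}^2$ via $b_c/\xi\to 1$, and in bounding the error term $\frac{\xi(\s)}{\mu\s}\ang{[\La_0\La Q]_{\U\la}\mid g}$ that the paper isolates in $B_2$.
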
 

\begin{rem} 
By~\eqref{eq:hatd-0} we  may apply Lemma~\ref{l:mod} to the arbitrary 2-bubble solution~$\bs u(t)$ on the time interval $J=[T_0, \infty)$ for large enough $T_0>0$, and we obtain a decomposition
\EQ{
\bs u(t) = \bs U( \mu(t), \s(t)) + \bs g(t) , \quad \forall \, \, t \in J
} 
 Note that by~\eqref{eq:hatd-0} and the proof of Lemma~\ref{l:mod}  we obtain the qualitative behavior,  
\EQ{ \label{eq:g-sig-small} 
 \| \bs g(t) \|_{\HH}^2 + \s(t)^k  \to 0 \mas t \to \infty.
} 
\end{rem} 

Before proving Lemma~\ref{l:mod} we record several identities that will be used throughout the rest of the paper. We use the notation 
\EQ{
\bs g(t) =: (g(t), \dot g(t)), \quad \bs U( \mu, \s) := (U( \mu, \s), \dot U (\mu, \s))
}
The equation satisfied by $\bs g(t)$ as defined in~\eqref{eq:gdef} is 
\EQ{
\p_t \bs g(t) &=  \p_t \bs u(t) - \p_t \bs U( \mu(t), \s(t)) \\
& = J \circ \uD \E(\bs u(t))  - \mu'(t) \p_\mu \bs U( \mu, \s) - \s' (t) \p_\s \bs U( \mu, \s) \\
& = J \circ \Big(\uD \E(\bs U(\mu, \s) + \bs g) -   \uD \E(\bs U(\mu, \s))\Big)  \\
& \quad - \mu' \p_\mu \bs U( \mu, \s) - \big( \s' \p_\s \bs U( \mu, \s) - J \circ \uD \E( \bs U(\mu, \s)) \big) 
}
We use~\eqref{eq:psU} to rewrite the last line above, obtaining,  
\EQ{ \label{eq:eq-g-ham} 
\p_t \bs g(t) &=J \circ \Big(\uD \E(\bs U(\mu, \s) + \bs g) -   \uD \E(\bs U(\mu, \s))\Big) \\
& \quad  - \mu' \p_\mu \bs U( \mu, \s) - \big( \s'   + \frac{\xi(\s)}{\mu} \Big)  \p_\s \bs U( \mu, \s)   \\ 
& = J \circ \Big(\uD \E(\bs U(\mu, \s) + \bs g) -   \uD \E(\bs U(\mu, \s))\big) \\
& \quad  - \mu' \p_\mu \bs U( \mu, \s) - \Big( \s'   + \frac{\rho_k \s^{\frac{k}{2}}}{\mu} \Big)  \p_\s \bs U( \mu, \s) +  \frac{1}{\mu}\big( \rho_k \s^{\frac{k}{2}} - \xi(\s)  \Big)  \p_\s \bs U( \mu, \s)
}
In components this reads, 
\EQ{ \label{eq:eq-g} 
 \p_t \pmat{ g \\ \dot g} =  \pmat{ \dot g - \mu' \p_\mu U( \mu, \s) -  ( \s' + \frac{\xi(\s)}{\mu}) \p_\s U(\mu, \s) \\  \De g - \frac{1}{r^2} \Big( f( U(\mu, \s) + g) - f( U(\mu, \s)) \Big)  - \mu' \dot{\p_\mu  U}(\mu, \s)   - ( \s' + \frac{\xi(\s)}{\mu})  \dot{\p_\s   U}(\mu, \s)} 
}


\begin{proof}[Proof of Lemma~\ref{l:mod}]
The proof of the existence of $(\bs g(t), \mu(t), \s(t))$ as in the statement of the lemma is nearly identical to~\cite[Proof of Lemma $3.1$]{JL1} so we give only a brief sketch here, highlighting the differences. First note that since $\hbfd(\bs u(t))$ is small, we can find $\mu_1(t), \s_1(t)$ with $\s_1^k(t) \le \eta^2$ so that for $g_1(t)$ defined by 
\EQ{ \label{eq:g_1} 
g_1(t)  := u(t) - (Q_{\mu_1 \s_1} - Q_{\mu_1})
}
we have 
\EQ{ \label{eq:dg1-small} 
\| g_1(t) \|_H^2 + \s_1(t)^k  \le  2\eta^2
}
To simplify notation we will suppress the time-dependency in the expressions below. Define a mapping $F: H \times (0, \infty) \times (0, \infty) \to H$ by 
\EQ{
F(g, \mu, \s) := g - U(\mu, \s) + (Q_{\mu_1 \s_1} - Q_{\mu_1}) 
}
and recall that by definition of $\bs g_c(\mu, \s)$ and the estimate~\eqref{eq:gc-H}, 
\EQ{ \label{eq:Umsg} 
U(\mu, \s) &= Q_{\mu \s} - Q_{\mu \mu_c(\la_c^{-1}(\s))} + g_c(\mu, \s) , \quad 
 \|g_c(\mu, \s) \|_H \lesssim   \s^k 
}
It follows that 
\EQ{ \label{eq:F=0} 
F( g_c(\la_c^{-1}(\s_1))_{\mu_1} + Q_{\mu_1 \mu_c(\la_c^{-1}(\s_1))} - Q_{\mu_1}, \mu_1, \s_1) = 0 
}
and we have 
\EQ{
\| F(g, \mu, \s) \|_H &\le \|g \|_H + \|U(\mu, \s) - (Q_{\mu \s} - Q_{\mu \mu_c(\la_c^{-1}(\s))}) \|_H  + \| Q_{\mu \s} - Q_{\mu_1 \s_1} \|_H\\
& \quad +  \| Q_{\mu \mu_c(\la_c^{-1}(\s))} - Q_{\mu_1\mu_c(\la_c^{-1}(\s))} \|_{H} +  \| Q_{\mu_1\mu_c(\la_c^{-1}(\s))} - Q_{\mu_1} \|_H  \\
&   \lesssim  \|g \|_H + \| g_c(\la_c^{-1}(\s)) \|_H + \abs{\frac{\mu \s}{\mu_1 \s_1} - 1}^{\frac{1}{2}}   + \abs{ \frac{\mu}{\mu_1} - 1 }^{\frac{1}{2}} + \abs{ \mu_{c}( \la_c^{-1}(\s)) - 1 }^{\frac{1}{2}}
}
Next, define a mapping $G: H \times (0, \infty) \times (0, \infty) \to \R^2$ by 
\EQ{
G(g, \mu, \s) = \pmat{    \frac{1}{\mu} \ang{ \La Q_{\U\mu} \mid F(g, \mu, \s)} \\  \frac{1}{\mu \s} \ang{  \La Q_{\U{\mu \s}}  \mid F(g, \mu, \s)}}
}
Using~\eqref{eq:F=0} we have 
\EQ{
G(g_c(\la_c^{-1}(\s_1))_{\mu_1} + Q_{\mu_1 \mu_c(\la_c^{-1}(\s_1))} - Q_{\mu_1}, \mu_1, \s_1) = 0
}
Moreover, for any $h \in H$ we have the estimates  
\EQ{
\frac{1}{\mu}\abs{ \ang{ \La Q_{\U\mu} \mid h} } &\lesssim  \| r/ \mu \La Q_{\U \mu} \|_{L^2}  \| r^{-1} h \|_{L^2} \lesssim \| h \|_H \\
\frac{1}{\mu\s}\abs{ \ang{ \La Q_{\U{\mu\s}} \mid h}}  &\lesssim  \| r( \mu \s)^{-1}  \La Q_{\U {\mu \s}} \|_{L^2}  \| r^{-1} h \|_{L^2} \lesssim \| h \|_H 
}
which ensures that $G$ is well defined and continuous. As in~\cite[Proof of Lemma $3.1$]{JL1} one can now readily check that the implicit function theorem can applied to $G$, meaning that for each $g_0$ in a small enough neighborhood (of size $\simeq \eta_0$) of $g_c(\la_c^{-1}(\s_1))_{\mu_1} + Q_{\mu_1 \mu_c(\la_c^{-1}(\s_1))} - Q_{\mu_1}$, we can find unique $(\mu_0, \s_0) = \si(g_0)$ (for the function $\si$ given by the implicit function theorem) in a neighborhood of $(\mu_1, \s_1)$ (we note that it is convenient here to work in the variables, $s=\log \s, m := \log \mu$) and so that 
\EQ{
G( g_0, \mu_0, \s_0) = 0
}
We refer the reader to~\cite[Lemma 3.1 and Remark 3.2]{JL1} for precise details on the version and implementation of the implicit function in this setting.   
The desired triple $(g, \mu, \s)$ as in the lemma is then given by 
\EQ{
(\mu, \s) := \si(g_1), \quad g:= F(g_1, \mu, \s)
}
where $g_1$ is as in~\eqref{eq:g_1}, as long as $g_1$ is close enough in $H$ to $g_c(\la_c^{-1}(\s_1))_{\mu_1} + Q_{\mu_1 \mu_c(\la_c^{-1}(\s_1))} - Q_{\mu_1}$. To see this we measure, 
\EQ{
\| g_1 - g_c(\la_c^{-1}(\s_1))_{\mu_1} + Q_{\mu_1 \mu_c(\la_c^{-1}(\s_1))} - Q_{\mu_1} \|_H &\le \| g_1 \|_H + \| g_c(\la_c^{-1}(\s_1))\|_H  \\
& \quad + \| Q_{\mu_1 \mu_c(\la_c^{-1}(\s_1))} - Q_{\mu_1} \|_H  \\
& \lesssim \s_1^k  + \abs{ \mu_c(\la_c^{-1}(\s_1)) - 1}^{\frac{1}{2}}  \lesssim \eta
}
where the last line above follows from~\eqref{eq:Umsg}, ~\eqref{eq:dg1-small}, and~\eqref{eq:muc-lim}. Thus $\mu, \s$ and $g$ are well-defined. To conclude, we note that it follows from the definition of $F$ that 
\EQ{
g = F(g_1, \si(g_1)) = F(g_1, \mu, \s) =  u - U(\mu, \s) 
}
and from the definition of $G$ that  
\EQ{
\ang{\La Q_{\U{\mu \s}} \mid g} = 0 \mand \ang{\La Q_{\U{\mu }} \mid g} = 0
}
as desired. 

The coercivity estimate follows from a standard argument using the orthogonality conditions~\eqref{eq:omu} and~\eqref{eq:ola} together with the localized coercivity Lemma~\ref{l:loc-coerce}. Indeed,  the smallness of $\s$  yields a uniform constant $c_1>0$ for which 
\EQ{
\int_0^\infty (\p_r g)^2 + k^2 \frac{ \cos(2Q_{\mu \s} - \cos 2 Q_\mu)}{r^2}  g^2 \,  r \, \ud r \ge c_1 \| g \|_H^2
}
For a detailed proof of the above see~\cite[Lemma 5.4]{JJ-AJM}. Next, we argue perturbatively.  Note that 
\EQ{
\ang{ D^2 \E( \bs U( \mu, \s) \bs g \mid \bs g} &= \int_0^\infty (\p_r g)^2 + k^2 \frac{ \cos(2U(\mu, \s))}{r^2}  g^2 \,  r \, \ud r 
}
and, 
\EQ{
\cos 2 U(\mu, \s) &= \cos \big( 2 Q_{\mu \s} - 2 Q_\mu  + ( 2 Q_\mu - 2Q_{\mu_c(\la_c^{-1}(\s)) \mu}) + 2g_c( \la_c^{-1}(\s))\big) \\
& =  \cos \big( 2 Q_{\mu \s} - 2 Q_\mu \big) + O( \abs{Q_\mu - Q_{\mu_c(\la_c^{-1}(\s)) \mu}}) + O( \abs{g_c( \la_c^{-1}(\s))})
}
Thus, 
\EQ{
\ang{ D^2 \E( \bs U( \mu, \s) \bs g \mid \bs g}  &\ge c_1 \| g \|_H^2 - O\Big( \abs{\mu_c(\la_c^{-1}(\s)) - 1}^{\frac{1}{2}}) + \| g_c( \la_c^{-1}(\s))\|_H \Big) \|g \|_{H}^2  \\
& \ge c_0 \| g \|_H^2
}
where the last line follows by taking $\s>0$ small enough. This completes the proof of~\eqref{eq:coer}.

Next we prove the estimates~\eqref{eq:dtmu} and~\eqref{eq:dtsig}. We differentiate the modulation equations, beginning with~\eqref{eq:omu}, 
\EQ{
0 = \frac{\ud }{\ud t} \ang{ \Lam Q_{\U \mu}  \mid g}  &= -\frac{\mu'}{\mu}\ang{ [\La_0 \Lam Q]_{\U \mu} \mid  g} + \ang{ \Lam Q_{\U \mu} \mid \p_t g} \\
& = -\frac{\mu'}{\mu}\ang{ [\La_0 \Lam Q]_{\U \mu} \mid  g}   + \ang{ \Lam Q_{\U \mu} \mid \dot g } - \mu' \ang{ \Lam Q_{\U \mu} \mid \p_\mu U( \mu, \s) }   \\ 
& \quad - \Big( \s' + \frac{\xi(\s)}{\mu}\Big) \ang{ \Lam Q_{\U \mu} \mid \p_\s U(\mu, \s)} 
}
Rearranging the above gives
\EQ{
 \ang{ \Lam Q_{\U \mu} \mid \dot g }  &=  \mu' \Big(\ang{ \Lam Q_{\U \mu} \mid \p_\mu U( \mu, \s) }   + \frac{1}{\mu} \ang{ [\La_0 \Lam Q]_{\U \mu} \mid  g} \Big) \\ 
 &\quad + \Big( \mu \s' + \xi(\s)\Big) \frac{1}{\mu} \ang{ \Lam Q_{\U \mu} \mid \p_\s U(\mu, \s)} 
}
Next write $\la := \s \mu$, and note that by the chain rule we have 
\EQ{ \label{eq:la'}
\frac{\la'}{\la}  = \frac{\s'}{\s} + \frac{\mu'}{\mu} 
}
Differentiating~\eqref{eq:ola} gives 
\EQ{
0 = \frac{\ud }{\ud t} \ang{ \La Q_{\U \la}  \mid g}  &= -\frac{\la'}{\la} \ang{[\La_0 \La Q]_{\U \la}  \mid g} + \ang{ \La Q_{\U \la} \mid \p_t g } \\
& = -\frac{\la'}{\la} \ang{[\La_0 \La Q]_{\U \la}  \mid g} + \ang{ \La Q_{\U \la} \mid \dot g } - \mu' \ang{ \La Q_{\U \la} \mid \p_\mu U( \mu, \s) }   \\ 
& \quad - \Big( \s' + \frac{\xi(\s)}{\mu}\Big) \ang{\La Q_{\U \la} \mid \p_\s U(\mu, \s)} 
}
which, using~\eqref{eq:la'} yields, 
\EQ{
  \ang{ \La Q_{\U \la} \mid \dot g } + \frac{\xi(\s)}{\mu \s}\langle [\La_0 \La Q]_{\U \la}  \mid & g \rangle = \mu' \Big( \ang{ \La Q_{\U \la} \mid \p_\mu U( \mu, \s) } + \frac{1}{\mu}  \ang{[\La_0 \La Q]_{\U \la}  \mid g} \Big)  \\
  &  + \big( \mu \s' + \xi(\s)\big) \Big(  \frac{1}{\mu} \ang{\La Q_{\U \la} \mid \p_\s U(\mu, \s)} +  \la^{-1} \ang{[\La_0 \La Q]_{\U \la}  \mid g} \Big)    
}
 We obtain the following system of equations, 
 \EQ{ \label{eq:Msys} 
 \pmat{M_{11} & M_{12} \\ M_{21} & M_{22}} \pmat{ \mu' \\ \mu \s' + \xi(\s)} =  \pmat{  \ang{ \Lam Q_{\U \mu} \mid \dot g }  \\ \ang{ \La Q_{\U \la} \mid \dot g } + \frac{\xi(\s)}{\mu \s}\ang{[\La_0 \La Q]_{\U \la}  \mid g}  } =:  \pmat{ B_1 \\ B_2}
 }
 where 
 \EQ{
 &M_{11} := \ang{ \Lam Q_{\U \mu} \mid \p_\mu U( \mu, \s) }   + \mu^{-1} \ang{ \La_0 \Lam Q_{\U \mu} \mid  g}  \\ 
 &M_{22}:=  \mu^{-1} \ang{\La Q_{\U \la} \mid \p_\s U(\mu, \s)} +   \lam^{-1} \ang{\La_0 \La Q_{\U \la}  \mid g} \\
 &M_{12}:=   \mu^{-1} \ang{ \Lam Q_{\U \mu} \mid \p_\s U(\mu, \s)} \\ 
 &M_{21}:= \ang{ \La Q_{\U \la} \mid \p_\mu U( \mu, \s) } + \mu^{-1}  \ang{[\La_0 \La Q]_{\U \la}  \mid g}
 }
Note the estimates, 
 \EQ{ \label{eq:B-est} 
& \abs{B_1} \lesssim  \| \dot g \|_{L^2} \\
 & \abs{ B_2} \lesssim  \| \dot g \|_{L^2}  +  \s^{\frac{k}{2}}  \|g \|_{H}  \mand  \abs{B_2 - \ang{  \La Q_{\U{\la}} \mid  \dot g} } \lesssim  \s^{\frac{k}{2}}  \|g \|_{H} 
 }
We claim the bounds 
\begin{claim}  \label{c:Mest} The following estimates hold true. 
 \begin{align}
 \label{eq:M11} \abs{ M_{11} -  \|\La Q \|_{L^2}^2} & \lesssim  \s^{\frac{1}{2}} +  \|  g \|_{H}\\
  \label{eq:M22}  \abs{M_{22} + (1- o(1))\|\La Q \|_{L^2}^2 } &\lesssim \s^k + \s \|g \|_{H}\\ 
\label{eq:M12} \abs{M_{12}} &\lesssim o(1) \\
 \label{eq:M21} \abs{M_{21}} &\lesssim \s\\
 \label{eq:detM} \det{M} &= M_{11}M_{22} + O( \s)  
\end{align}
where $o(1)$ can be replaced by a constant that can be made as small as we like by taking $\eta$ small enough. 
 \end{claim}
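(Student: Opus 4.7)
The plan is to expand each entry of $M$ using the derivative identities $\p_\mu \bs U = -\mu^{-1}\bs\Lam \bs U$ from~\eqref{eq:pmuU} and $\p_\s \bs U = -\mu\, \xi(\s)^{-1}\, J\circ \uD\E(\bs U)$ from~\eqref{eq:psU}, together with the decomposition~\eqref{eq:U-Q-g} that expresses $\bs U(\mu, \s)$ as a sum of two bubbles at well-separated scales $\mu\s \ll \mu_c(\lam_c^{-1}(\s))\mu$ plus the small error $\bs g_c(\mu, \s)$. Each matrix entry then reduces to a linear combination of scale-invariant pairings of $\Lam Q$ with itself (which produce the main terms proportional to $\|\Lam Q\|_{L^2}^2$) plus remainders that are small by virtue of (i) the large scale separation, (ii) the refined asymptotics~\eqref{eq:xis}--\eqref{eq:ac-lim} and the refined norm bounds~\eqref{eq:wc-mus-H}--\eqref{eq:gc-LamH} in Corollary~\ref{c:U-est1}, and (iii) weighted Cauchy--Schwarz for the pairings against $g$.

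For~\eqref{eq:M11}, I would substitute $\Lam U = \Lam Q_{\mu\s} - \Lam Q_{\mu_c \mu} + \Lam g_c(\mu, \s)$. The main contribution $\mu^{-1}\ang{\Lam Q_{\underline\mu} \mid \Lam Q_{\mu_c \mu}} = \|\Lam Q\|_{L^2}^2 + O(|\mu_c - 1|)$ follows by rescaling and continuity. The cross-scale pairing $\mu^{-1}\ang{\Lam Q_{\underline\mu} \mid \Lam Q_{\mu\s}}$ is $O(\s^k)$ because the radial integrand is dominated by the polynomial decay of $\Lam Q$; the $\Lam g_c$ pairing is $O(\s^k)$ by~\eqref{eq:gc-LamH} and a weighted Cauchy--Schwarz; and $\mu^{-1}\ang{\Lam_0 \Lam Q_{\underline\mu} \mid g} = O(\|g\|_H)$ by the same technique. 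Combined with~\eqref{eq:muc-lim} this gives~\eqref{eq:M11}. For~\eqref{eq:M22}, the derivative formula yields $\mu^{-1}\ang{\Lam Q_{\underline\lam} \mid \p_\s U} = -\xi(\s)^{-1}\ang{\Lam Q_{\underline\lam} \mid \dot U}$, and one decomposes $\dot U = b_c \Lam Q_{\underline{\mu\s}} + a_c \Lam Q_{\underline{\mu_c\mu}} + (\dot\Phi - b_c\Lam Q_{\underline{\mu\s}} - a_c\Lam Q_{\underline{\mu_c\mu}}) + \dot w_c$. The principal term $-b_c(\lam_c^{-1}(\s))\xi(\s)^{-1}\|\Lam Q\|_{L^2}^2 = -(1 + o(1))\|\Lam Q\|_{L^2}^2$ comes from~\eqref{eq:xis}; the remaining contributions are either absorbed into the $(1-o(1))$ coefficient or bounded by $\s^k$ after dividing by $\xi(\s)\sim \s^{k/2}$, using the refined pairing estimates in the second lines of~\eqref{eq:dotPhi-bLamQ}--\eqref{eq:dotPhi-aLamQ} and the bound~\eqref{eq:wc-mus-H} on $\dot w_c$.

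The off-diagonal entries are handled by the same expansion. For~\eqref{eq:M12}, the scale separation gives $\ang{\Lam Q_{\underline\mu}\mid \Lam Q_{\underline{\mu\s}}} = O(\s^{k-1})$ while $\ang{\Lam Q_{\underline\mu}\mid \Lam Q_{\underline{\mu_c\mu}}} = \|\Lam Q\|_{L^2}^2 + o(1)$, so the dominant term reduces to $-a_c(\lam_c^{-1}(\s))\xi(\s)^{-1}\|\Lam Q\|_{L^2}^2 = o(1)$ by~\eqref{eq:ac-lim} and~\eqref{eq:xis}. For~\eqref{eq:M21}, the main term $-\mu^{-1}\ang{\Lam Q_{\underline\lam}\mid \Lam Q_\lam} = -\s\|\Lam Q\|_{L^2}^2$ already supplies the $O(\s)$ bound, with every other piece of strictly lower order in $\s$ (or absorbable into $\s\|g\|_H$ after exploiting the orthogonality~\eqref{eq:ola}). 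The determinant estimate~\eqref{eq:detM} then follows from $|M_{12} M_{21}| \le o(1) \cdot O(\s) = O(\s)$.

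The main obstacle is the careful bookkeeping across several small parameters acting simultaneously---the scale separation $\s$ between the two bubbles, the $O(\s^2)$ deviation of $\mu_c$ from $1$, the matching $b_c/\xi \to 1$ versus the strictly smaller $a_c/\xi \sim \s$, and the refined error bounds on $\bs g_c$ and $\bs w_c$. Pinning down the coefficient $(1-o(1))$ in~\eqref{eq:M22}, in particular, requires a careful combination of~\eqref{eq:xis} with the refined pairing estimate in the second line of~\eqref{eq:dotPhi-bLamQ} (and its analogue~\eqref{eq:dotPhi-aLamQ}), and is the sole place in the argument where the refined asymptotics from Corollary~\ref{c:U-est1} are genuinely needed rather than just the crude bounds.
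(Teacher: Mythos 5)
Your proposal follows essentially the same route as the paper: substitute the derivative identities~\eqref{eq:pmuU},~\eqref{eq:psU} into each entry, decompose $\bs U$ into the two-bubble leading part plus the small error $\bs g_c$ (equivalently $\bs \Phi + \bs w_c$ with the leading $a_c, b_c$ pieces pulled out), isolate the scale-invariant $\ang{\Lam Q \mid \Lam Q}$ pairings as the main terms, and bound the remainders via scale separation, the refined asymptotics~\eqref{eq:xis}--\eqref{eq:ac-lim}, the bounds in Corollary~\ref{c:U-est1} (together with the pairing estimates of Lemma~\ref{l:Phi-mus}), and weighted Cauchy--Schwarz for the terms involving $g$. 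The only cosmetic differences are that you claim slightly sharper remainders in a few spots (e.g.\ $O(|\mu_c-1|)$ rather than $O(|\mu_c-1|^{1/2})$, and $O(\s^k)$ rather than $O(\s^{1/2})$ for the cross-scale pairing) and invoke the orthogonality~\eqref{eq:ola} in the $M_{21}$ bound where a direct weighted Cauchy--Schwarz on $\mu^{-1}\ang{[\Lam_0\Lam Q]_{\U\lam}\mid g}$ already suffices; neither affects the conclusion.
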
 
 
 \begin{proof}[Proof of Claim~\ref{c:Mest}] 
First we prove~\eqref{eq:M11}. Recall that 
$
\p_\mu U( \mu, \s) = -\frac{1}{\mu} \La U ( \mu, \s)
$.
Hence, 
\EQ{
 \abs{ M_{11} -  \| \Lam Q \|_{L^2}^2} \le \abs{\ang{ \Lam Q_{\U \mu} \mid  \frac{1}{\mu} (\La U(\mu, \s) + \La Q_{\mu})}} + \abs{ \mu^{-1} \ang{ [\La_0 \Lam Q]_{\U \mu} \mid  g}}
}
The second term on the right above can be bounded as follows:
\EQ{
 \abs{ \mu^{-1} \ang{ [\La_0 \Lam Q]_{\U \mu} \mid  g}} \lesssim  \| (r/ \mu) [\La_0 \Lam Q]_{\U \mu} \|_{L^2} \| r^{-1} g \|_{L^2} \lesssim  \|g \|_{H}
}
To control the first term on the right, we first write 
\EQ{
\La U( \mu, \s) = \La Q_{\s \mu} - \La Q_{\mu \mu_c( \la^{-1}(\s))} + \La g_c( \mu, \s)
}
so after rescaling we have, 
\EQ{
\ang{ \Lam Q \mid ( \La U(\mu, \s)_{\mu^{-1}} + \La Q} &\lesssim \abs{\ang{ \Lam Q \mid \La Q_{\s}}}+ \abs{\ang{ \Lam Q \mid \La Q - \La Q_{\mu_c( \la^{-1}(\s))}}}  \\
& \quad +  \abs{\ang{ \Lam Q\mid\La g_c( \la^{-1}(\s))}}
}
For the first term we have  
$
\abs{\ang{ \Lam Q \mid \La Q_{\s}}} \ll \s^{\frac{1}{2}} 
$.
Next, observe that by~\eqref{eq:gc-H}, 
\EQ{ \label{eq:Zgcpair} 
\abs{\ang{  \Lam Q \mid\La g_c( \la_c^{-1}(\s))}} \lesssim  \| r \Lam Q \|_{L^2} \| r^{-1}\La g_c( \la_c^{-1}(\s) \|_{L^2} \lesssim \|g_c( \la_c^{-1}(\s)\|_{H} \lesssim \s^{k}
}
Lastly, we use~\eqref{eq:muc-lim} to deduce that 
\EQ{
\abs{\ang{ \Lam Q \mid \La Q - \La Q_{\mu_c( \la_c^{-1}(\s))}}} \lesssim \abs{ \mu_c(\la_c^{-1}(\s)) -1 }^{\frac{1}{2}}  \lesssim \s^{\frac{1}{2}} 
}
Combining these estimates proves~\eqref{eq:M11}. Next we treat the term $M_{22}$.  
We have 
 \EQ{
 M_{22}   =  \ang{\La Q_{\U \la} \mid \frac{1}{\mu} \p_\s U(\mu, \s) } +  \lam^{-1}  \ang{[\La_0 \La Q]_{\U \la}  \mid g}
 }
For the second term above, we have 
 \EQ{
\abs{ \la^{-1} \ang{[\La_0 \La Q]_{\U \la}  \mid g}}  \lesssim  \| (r/\la) [\La_0 \La Q]_{\U \la} \|_{L^2} \| r^{-1} g \|_{L^2} \lesssim  \|g \|_{H}
 }
 By~\eqref{eq:psU} and the definition of $\dot U(\mu, \s)$ we have 
 \EQ{
\frac{1}{\mu}  \p_\s U(\mu, \s) = - \frac{1}{ \xi(\s)} \dot U(\mu, \s)  & = - \Lam Q_{\U{\mu \s} }+ \Big( 1 - \frac{b_{c}( \la_c^{-1}(\s))}{ \xi( \s)} \Big) \Lam Q_{\U {\mu \s}} \\
& \quad - \frac{1}{\xi(\s)}  \Big(  \dot \Phi(\mu, \s) - b_c( \la_c^{-1}(\s))  \Lam Q_{\U{\mu \s}} \Big) - \frac{1}{\xi(\s)} \dot w_c(\mu, \s)
 }
 It then follows from~\eqref{eq:xis},~\eqref{eq:dotPhi-bLamQ}, and~\eqref{eq:wc-mus-H} that, 
 \EQ{
 \ang{\La Q_{\U \la} \mid \frac{1}{\mu} \p_\s U(\mu, \s) } = -( 1- o(1)) \| \Lam Q\|_{L^2} + O(\s^{\frac{k}{2}})  
 }
 This proves~\eqref{eq:M22}. 
 To prove~\eqref{eq:M12} we  write, 
 \EQ{
 \frac{1}{\mu}  \p_\s U(\mu, \s) &= -\frac{a_c( \la_c^{-1}(\s))}{\xi(\s)}  \Lam Q_{\U{\mu \mu_c( \la_c^{-1}(\s))}}  - \frac{1}{\xi(\s)} \Big( \dot \Phi(\mu, \s) - a_c( \la_c^{-1}(\s))  \Lam Q_{\U{\mu  \mu_c( \la_c^{-1}(\s))}}  \Big) \\
 & \quad  - \frac{1}{\xi(\s)} \dot w_c(\mu, \s) 
 }
 and thus, using~\eqref{eq:xis}~\eqref{eq:muc-lim},~\eqref{eq:ac-lim},~\eqref{eq:wc-mus-H} and~\eqref{eq:dotPhi-aLamQ} we arrive at the estimate, 
 \EQ{
  \abs{M_{12}} =  \abs{ \ang{ \Lam Q_{\U \mu} \mid \frac{1}{\mu} \p_{\s} U(\mu, \s)}} \lesssim o(1) 
 }
 Finally, we estimate~\eqref{eq:M21}. 
 \EQ{
 \abs{M_{21}}  \lesssim \abs{\ang{ \La Q_{\U \la} \mid \p_\mu U( \mu, \s) }} + \abs{\frac{1}{\mu}  \ang{[\La_0 \La Q]_{\U \la}  \mid g}}
 }
 The second term above is controlled as follows, 
 \EQ{
  \abs{\frac{1}{\mu}  \ang{[\La_0 \La Q]_{\U \la}  \mid g}} \lesssim \s \| r/ \la \La_0 \La Q_{\U \la}\|_{L^2} \| r^{-1}g \|_{L^2} \lesssim \s \|g \|_{H}
 }
 To estimate the first recall  that 
 \EQ{
 \p_\mu U(\mu, \s) = -\frac{1}{\mu} \La U(\mu, \s) = -\s  \La Q_{\U{\s \mu}} +  \mu_c(\la_c^{-1}(\s)) \La Q_{\U{\mu_c( \la^{-1}_c(\s)) \mu}} -  (\La g_c)( \la_c^{-1}(\s))_{\U \mu}  
 }
 and hence 
 \EQ{ 
 \abs{\ang{ \La Q_{\U \la} \mid \p_\mu U( \mu, \s) }} &\lesssim  \s \| \La Q \|_{L^2}^2 +  \abs{ \ang{ \La Q_{\U \s} \mid  \La Q_{\U{\mu_c( \la^{-1}_c(\s)) }}}} + \abs{ \ang{ \Lam Q_{\U \s} \mid  (\La g_c)( \la_c^{-1}(\s))}} \\
 & \lesssim  \s 
 }
 as claimed. 
\end{proof} 
With the estimates in Claim~\ref{c:Mest} in hand, we see that we can invert $M$  as  long as $\| \bs g \|_{\HH}$ and $\s$ are  small enough and solve for $(\mu', \mu \s' + \xi(\s))$ in~\eqref{eq:Msys}. This yields, 
\EQ{
 \mu' = \left[ \frac{1}{M_{11}M_{22}} + O( \s) \right]\left( M_{22}  B_1 - M_{12} B_2 \right)
}
From Claim~\ref{c:Mest} and~\eqref{eq:B-est} we conclude that 
\EQ{
\abs{\mu'} \lesssim \|   \dot  g \|_{L^2}   + \s^{\frac{k}{2}} \| g \|_H
}
which proves~\eqref{eq:dtmu}. Similarly, 
\EQ{\label{eq:mus'xi} 
 \mu \s' +  \xi(\s) = \left[ \frac{1}{M_{11} M_{22}} + O(  \s)  \right] \left( M_{11} B_2 - M_{21} B_1 \right)
}
Therefore, on the one hand we can conclude from Claim~\ref{c:Mest} and~\eqref{eq:B-est} that 
\EQ{
\abs{\mu \s' +  \xi(\s) } \lesssim \| \dot g \|_{L^2} + \s^{\frac{k}{2}} \|g \|_{H}
}
In fact, extracting the leading order from the right-hand-side of~\eqref{eq:mus'xi} we deduce that 
\EQ{
\abs{\mu \s' +  \xi(\s) + \frac{1}{\| \La Q \|_{L^2}^2} \ang{ \La Q_{\U \la} \mid \dot g} }  \lesssim  o(1)  \| \dot g \|_{L^2}  +  \s^{\frac{k}{2}} \|g \|_{H}
}
proving~\eqref{eq:dtsig}. 
\end{proof}

\section{The poof of uniqueness} 

In this section we complete the proof of Theorem~\ref{t:main}. 

\subsection{An outline of the proof of Theorem~\ref{t:main} } \label{s:outline} 

We begin with  a short outline of end of the proof of Theorem~\ref{t:main}. The purpose is to motivate the computations performed in the next subsection.

 Let $\bs u(t) \in \HH$ be any $2$-bubble in forward time as in~\eqref{eq:2-bub-def} on the time interval $[T_0, \infty)$. By taking $T_0>0$ large enough we may apply Lemma~\ref{l:mod} on the time interval $[T_0, \infty)$, obtaining a decomposition
\EQ{
\bs u(t) = \bs U(\mu(t), \s(t)) + \bs g(t)
}
as in Lemma~\ref{l:mod}. 
By the local Cauchy theory, it will suffice to find a single time $t \ge T_0$ for which we have  $\| \bs g(t)\|_{\HH} =0$. 
The starting point is the following Taylor expansion of the conserved energy about the constructed  trajectory $\bs U(\mu, \s)$.  For each time $t \ge T_0$ we have 
\EQ{
2 \E(\bs Q) &= \E(  \bs u(t)) = \E( \bs U(\mu(t), \s(t)) +\bs g(t))  \\
& =  \E( \bs U(\mu(t), \s(t))) + \ang{ D \E(\bs U(\mu(t), \s(t))) \mid \bs g(t)}   \\
& \quad +  \ang{ D^2 \E(\bs U(\mu(t), \s(t)))  \bs g(t) \mid \bs g(t)}  + o( \| \bs g \|_{\HH}^2) \\
& = 2 \E(\bs Q)  +  \ang{ D \E(\bs U(\mu(t), \s(t))) \mid \bs g(t)}   \\
& \quad +  \ang{ D^2 \E(\bs U(\mu(t), \s(t)))  \bs g(t) \mid \bs g(t)}  + o( \| \bs g \|_{\HH}^2)
}
Subtracting $2 \E( \bs Q)$ from both sides,  recalling the coercivity estimate from Lemma~\ref{l:mod}, i.e.,~\eqref{eq:coer}, and making the ``little oh" term above smaller than half the coercivity constant $c_1>0$ (which is possible by taking $T_0>0$ large enough)  we arrive at the inequality 
\EQ{ \label{eq:enexp1} 
 0 \ge  \ang{ D \E(\bs U(\mu(t), \s(t))) \mid \bs g(t)}  + \frac{1}{2} c_1 \| \bs g(t) \|_{\HH}^2
}
We will show there is necessarily a time $T_1  \ge T_0$ such that 
\EQ{\label{eq:enexp2} 
\ang{ D \E(\bs U(\mu(T_1), \s(T_1))) \mid \bs g(T_1)} \ge -\frac{1}{4} c_1 \| \bs g(T_1) \|_{\HH}^2
} 
which together with~\eqref{eq:enexp1} would imply that $\| \bs g(T_1)\|_{\HH}  =  0$ and thus 
\EQ{ \label{eq:T1-property} 
\bs u(T_1) = \bs U(\mu(T_1), \s(T_1)) = (u_c( \lam_c^{-1}(\s(T_1)), \cdot/ \mu(T_1)), \mu(T_1)^{-1} \p_t u_c( \lam_c^{-1}(\s(T_1)), \cdot/ \mu(T_1))), 
}
which would prove Theorem~\ref{t:main}. In the next section we analyze   the dynamics of 
\EQ{ \label{eq:DEUg} 
\ang{ D \E(\bs U(\mu(t), \s(t))) \mid \bs g(t)}. 
}
with the goal of proving~\eqref{eq:enexp2}.

\subsection{Analysis of the refined instability component} 
We now come to the heart of the argument. For each $\mu, \s >0$ define  
\EQ{ \label{eq:be-def} 
\bs \be( \mu, \s) :=  \frac{1}{ \rho_k  \s^{\frac{k}{2}}} \uD \E( \bs U(\mu, \s)) = \frac{1}{ \rho_k  \s^{\frac{k}{2}}} \pmat{   -\De U(\mu, \s) + r^{-2} f(U(\mu, \s)) \\  \dot U( \mu, \s) } 
}
We make a few comments on how to think of $\bs \be(\mu, \s)$. Recall that 
\EQ{
\p_\s \bs U(\mu, \s) = - \frac{\mu}{\xi(\s)} J \circ \uD \E(\bs U(\mu, \s) ) 
}
Since $\rho_k  \s^{\frac{k}{2}}   \simeq \xi( \s)$, we see that  $\bs \be (\mu, \s)$ is essentially a $90$-degree rotation of $\p_\s \bs U(\mu, \s)$, rescaled by $\mu^{-1}$ i.e., 
\EQ{
\bs \be(\mu, \s)  = \frac{1}{ \rho_k  \s^{\frac{k}{2}}} \uD \E( \bs U(\mu, \s)) \simeq  \frac{1}{\mu} J \circ  \p_\s  \bs U(\mu, \s)
}
Next, consider the coefficient of the projection of $\bs g(t)$ onto $\bs \be (\mu, \s)$, modified by a small ``virial'' type correction term.  
\EQ{ \label{eq:bdef} 
b(t): &=  \ang{ \bs\be(\mu(t), \s(t)) \mid  \bs g(t)}   + \ang{  \calA_0(\mu(t) \s(t)) g(t) \mid \dot g(t)}  \\ 
& =  \frac{1}{\rho_k \s^{\frac{k}{2}}} \ang{ D \E(\bs U(\mu(t), \s(t))) \mid \bs g(t)} + \ang{  \calA_0(\mu(t) \s(t)) g(t) \mid \dot g(t)} 
}
The correction is intended to produce cancellations of terms of critical size, but indeterminate sign, when we compute $b'(t)$ below.

The basic lemma for the family $\bs \be(\mu, \s)$ is the following. 
\begin{lem}  \label{l:be-est} 
The family of functionals $\be(\mu, \s)$ is uniformly bounded in $\HH^*$. In fact, we have the estimates, 
\EQ{ \label{eq:beest} 
\abs{\ang{ \bs\be(\mu, \s)  - ( 0, \La Q_{\U{\mu \s}})  \mid  \bs h}}  = o(1)  \| \bs h \|_{\HH} \mas \s \to 0 .
}
for all $ \bs h \in \HH$. 
In particular,
\EQ{ \label{eq:best} 
\abs{ b(t) -  \ang{ \La Q_{\U{\s \mu}}, \mid \dot g}}  = o( 1) \| \bs g\|_{\HH} \mas  \s \to 0 .
}
We also have the estimate, 
\EQ{ \label{eq:beest2} 
 \| \bs\be(\mu, \s)  - ( 0, \La Q_{\U{\mu \s}})  \|_{L^2 \times H} = o(1) \frac{1}{\mu \s} \mas  \s \to 0
}
\end{lem}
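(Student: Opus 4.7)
\textbf{Proof proposal for Lemma \ref{l:be-est}.} The plan is to reduce all three estimates to the refined bounds on $\bs U(\mu,\s)$ already collected in Corollary \ref{c:U-est}, together with the mapping properties of the virial operator $\calA_0$ from Lemma \ref{l:opA} and the smallness of $\bs g$ from \eqref{eq:g-sig-small}. By definition,
\[
\bs\be(\mu,\s) - (0,\Lam Q_{\U{\mu\s}}) = \Big(\tfrac{1}{\rho_k\s^{k/2}}\big({-}\De U(\mu,\s)+r^{-2}f(U(\mu,\s))\big),\ \tfrac{1}{\rho_k\s^{k/2}}\dot U(\mu,\s)-\Lam Q_{\U{\mu\s}}\Big),
\]
so the task splits into estimating each component separately.

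For \eqref{eq:beest}, I would pair against $\bs h=(h,\dot h)\in\HH$ and handle the two components by Cauchy--Schwarz. For the first component I use the weighted equation bound \eqref{eq:eqU}, writing
\[
\Big|\tfrac{1}{\rho_k\s^{k/2}}\ang{{-}\De U+r^{-2}f(U)\mid h}\Big|\lesssim \tfrac{1}{\s^{k/2}}\|r({-}\De U+r^{-2}f(U))\|_{L^2}\,\|r^{-1}h\|_{L^2}\lesssim \s^{k/2-1}\|h\|_H,
\]
which is $o(1)\|h\|_H$ since $k\ge 4$ forces $k/2-1\ge 1$. For the second component I invoke \eqref{eq:dotU1} directly to obtain $o(1)\|\dot h\|_{L^2}$. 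Summing yields \eqref{eq:beest}.

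For \eqref{eq:best}, I apply \eqref{eq:beest} with $\bs h=\bs g$ to get $\ang{\bs\be(\mu,\s)\mid\bs g}=\ang{\Lam Q_{\U{\mu\s}}\mid\dot g}+o(1)\|\bs g\|_\HH$; it then remains to show that the virial correction contributes $o(1)\|\bs g\|_\HH$. By the boundedness $\calA_0(\mu\s)\in\scrL(H;L^2)$ from Lemma \ref{l:opA},
\[
|\ang{\calA_0(\mu\s)g\mid\dot g}|\lesssim \|g\|_H\|\dot g\|_{L^2}\lesssim \|\bs g\|_\HH^2 = \|\bs g\|_\HH\cdot\|\bs g\|_\HH,
\]
and \eqref{eq:g-sig-small} gives $\|\bs g\|_\HH\to 0$ as $\s\to 0$, so the factor of $\|\bs g\|_\HH$ in front plays the role of the $o(1)$. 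Combining the two pieces yields \eqref{eq:best}.

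For \eqref{eq:beest2} I compute the $L^2\times H$ norm of the two components directly, using the $L^2$-level bound \eqref{eq:eqU-L2} on the equation error (giving $\lesssim \s^{k/2}/(\mu\s)$, which is $o(1)/(\mu\s)$) and \eqref{eq:dotU-H} on the velocity (giving exactly $o(1)/(\mu\s)$). No single step is a genuine obstacle here; the only point requiring care is ensuring that the $r$-weight lost in passing from \eqref{eq:eqU} to the unweighted pairing \eqref{eq:beest} (respectively, to the $L^2$ bound in \eqref{eq:beest2}) is absorbed by the $\s^{k-1}$ gain afforded by Corollary \ref{c:U-est}, which is precisely where the hypothesis $k\ge 4$ is used.
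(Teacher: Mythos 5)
Your argument is correct and follows essentially the same route as the paper, which cites the same four estimates from Corollary~\ref{c:U-est} (namely \eqref{eq:dotU1} and \eqref{eq:eqU} for \eqref{eq:beest}, and \eqref{eq:dotU-H} and \eqref{eq:eqU-L2} for \eqref{eq:beest2}) and leaves the rest to the reader. You spell out the Cauchy--Schwarz step componentwise and, helpfully, you make explicit that the virial correction $\ang{\calA_0(\mu\s)g\mid\dot g}$ in the definition of $b(t)$ is controlled by boundedness of $\calA_0(\mu\s)\in\scrL(H;L^2)$ together with $\|\bs g\|_{\HH}\to 0$ from \eqref{eq:g-sig-small} -- a detail the paper's one-line proof of \eqref{eq:best} elides. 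One minor quibble: the remark at the end misattributes the role of $k\ge 4$; the exponent $\s^{k/2-1}$ is already $o(1)$ for any $k>2$, and the real place $k\ge 4$ enters is upstream, in securing the weighted regularity $\bs u_c\in\bs\Lam^{-1}\HH$ behind \eqref{eq:eqU} and \eqref{eq:wc-mus-LamH}. This does not affect the validity of the argument.
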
 

\begin{rem}From~\eqref{eq:beest} we see that to leading order 
\EQ{
\bs \be(\mu, \s) \simeq (0,   \La Q_{\U{\mu \s}}) 
}
and is thus $b(t)$ is closely related to the quantity that is also called $b(t)$ in~\cite{JL1}. It is also related to the \emph{refined unstable component} from~\cite{JJ-Pisa}.  
\end{rem} 

\begin{proof}[Proof of Lemma~\ref{l:be-est}]  
The proof of~\eqref{eq:beest} and hence also of~\eqref{eq:best} are direct consequences of the definition of $\bs \be(\mu, \s)$ in~\eqref{eq:be-def} and the estimates~\eqref{eq:dotU1} and~\eqref{eq:eqU} from Corollary~\ref{c:U-est}. The estimate~\eqref{eq:beest2} follows from \eqref{eq:dotU-H} and~\eqref{eq:eqU-L2}. 
\end{proof} 

\begin{prop}\label{p:b'}    Let $\bs u(t) \in \HH$ be a two-bubble in forward time and define $b(t)$ as in~\eqref{eq:bdef}. For any $c_0>0$ there exists $T_0>0$ such that 
\EQ{ \label{eq:b'} 
b'(t) \le  \frac{k \rho_k}{2 \mu(t) \s(t)} \s(t)^{\frac{k}{2}} b(t)  + c_0 \frac{1}{\mu(t) \s(t)}  \left( \abs{b(t)} \s(t)^{\frac{k}{2}} +  \| \bs g(t) \|_{\HH}^2\right)
} 
holds uniformly on the time interval $[T_0, \infty)$. 
\end{prop}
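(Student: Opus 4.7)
The plan is to differentiate the definition \eqref{eq:bdef} of $b(t)$ term by term, substitute the equation \eqref{eq:eq-g-ham} for $\p_t \bs g$, and identify $\tfrac{k\rho_k}{2\mu\s}\s^{k/2}\,b(t)$ as the single growth term, with every remaining contribution absorbed by the error bucket $c_0(\mu\s)^{-1}(|b|\s^{k/2} + \|\bs g\|_\HH^2)$. The computation splits naturally into two blocks corresponding to the two summands in the definition of $b$: the refined projection $\ang{\bs\be(\mu,\s)\mid\bs g}$ and the virial correction $\ang{\calA_0(\mu\s)\,g\mid\dot g}$.

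For the projection block, the chain rule gives
\[
\tfrac{d}{dt}\ang{\bs\be\mid\bs g} = \mu'\ang{\p_\mu\bs\be\mid\bs g} + \s'\ang{\p_\s\bs\be\mid\bs g} + \ang{\bs\be\mid\p_t\bs g}.
\]
The key observation is that $\bs\be = \rho_k^{-1}\s^{-k/2}\uD\E(\bs U)$ pairs trivially with both $\p_\mu\bs U$ and $\p_\s\bs U$: the former because $\E$ is scale invariant, so $\E(\bs U(\mu,\s))$ is independent of $\mu$, and the latter because $\p_\s\bs U = -(\mu/\xi(\s))\,J\circ\uD\E(\bs U)$ coupled with the antisymmetry of $J$. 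Consequently, the two modulation terms in $\p_t\bs g$ do not contribute to $\ang{\bs\be\mid\p_t\bs g}$, leaving only the Hamiltonian coupling
\[
\ang{\bs\be\mid JD^2\E(\bs U)\bs g} = -\ang{D^2\E(\bs U)J\bs\be\mid\bs g} = \tfrac{1}{\mu}(1+o(1))\ang{D^2\E(\bs U)\p_\s\bs U\mid\bs g}
\]
and a quadratic nonlinear remainder. Combining this with $\s'\ang{\p_\s\bs\be\mid\bs g}$, after expanding $\p_\s\bs\be = -\tfrac{k}{2\s}\bs\be + \rho_k^{-1}\s^{-k/2}D^2\E(\bs U)\p_\s\bs U$, using \eqref{eq:D2sig} to evaluate $\ang{D^2\E(\bs U)\p_\s\bs U\mid\bs g}$, substituting $\s'\approx -\rho_k\s^{k/2}/\mu$ from \eqref{eq:dtsig}, and invoking \eqref{eq:best} to replace $\ang{\Lam Q_{\U{\mu\s}}\mid\dot g}$ by $b(t)$, I find that the $\ang{\Lam_0\Lam Q_{\U{\mu\s}}\mid\dot g}$ pieces cancel between the two contributions, while the $\ang{\Lam Q_{\U{\mu\s}}\mid\dot g}$ pieces combine via $\gamma_k = \tfrac{k}{2}\rho_k^2$ to produce exactly $\tfrac{k\rho_k}{2\mu\s}\s^{k/2}b(t)$. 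The stray $\mu'\ang{\p_\mu\bs\be\mid\bs g}$ contribution is bounded by $o(1)\|\bs g\|_\HH^2/(\mu\s)$ via \eqref{eq:D2mu} and \eqref{eq:dtmu}, and the nonlinear remainder is controlled by adapting Lemma~\ref{l:w^2est} from $\bs\Phi$ to $\bs U = \bs\Phi + \bs w_c$ via \eqref{eq:wc-mus-H}, paired against $\be_2\approx\Lam Q_{\U{\mu\s}}$.

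For the virial block, antisymmetry of $\calA_0(\mu\s)$ (verified by integration by parts) kills the leading $\ang{\calA_0\dot g\mid\dot g}$ term, while the $H\to L^2$ boundedness of $\lambda\p_\lambda\calA_0(\lambda)$ from Lemma~\ref{l:opA} controls the scale-derivative term. The principal remaining piece is $-\ang{\calA_0 g\mid\LL_U g}$, which I split as $\LL_U = \LL_0 + \ti P_U$ and estimate using the Pohozaev-type lower bound \eqref{eq:A-pohozaev} for $\ang{\calA_0 g\mid\LL_0 g}$ together with \eqref{eq:vir-new} for the piece of $\ti P_U$ concentrated at the harmonic-map scale $\mu\s$; the anti-bubble piece of $\ti P_U$ concentrated at scale $\mu$ is rendered negligible by the spatial truncation built into $\calA_0(\mu\s)$ (Lemma~\ref{l:pdef}). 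Combined with the localized coercivity of Lemma~\ref{l:loc-coerce}, activated by the orthogonality conditions \eqref{eq:omu}--\eqref{eq:ola}, this gives $\tfrac{d}{dt}\ang{\calA_0 g\mid\dot g}$ bounded by $c_0\|\bs g\|_\HH^2/(\mu\s)$ modulo modulation terms, which in turn fit the error bucket using \eqref{eq:dtmu}--\eqref{eq:dtsig} and the $L^2$-boundedness of $\calA_0$.

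The main obstacle will be organizing the many terms and verifying that every mixed contribution of the form $o(1)\s^{k/2}\|\bs g\|_\HH/(\mu\s)$ produced by the $o(1)$ errors in Corollary~\ref{c:U-est} actually fits into $c_0(\mu\s)^{-1}(|b|\s^{k/2} + \|\bs g\|_\HH^2)$; this uses both \eqref{eq:best} (which implies $|b|\lesssim\|\bs g\|_\HH$) and Young's inequality, exploiting the smallness of $\s$ once $T_0$ is chosen large. This is exactly where the refined information in Theorem~\ref{t:refined}---the $\HH^2\cap\bs\Lam^{-1}\HH$ regularity of $\bs u_c$ and the precise expansions of $(\mu_c,\lam_c,a_c,b_c)$---enters essentially: it is the $o(1)$-smallness of the corrections in \eqref{eq:D2mu}--\eqref{eq:D2sig} (which in turn rest on the weighted $\HH^2$-estimates in Corollary~\ref{c:U-est1}) that drives every implicit constant below $c_0$. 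A secondary delicate point is the two-scale structure of $\ti P_U$ concentrated near both $\mu\s$ and $\mu$, which forces careful use of the cutoff properties of $\calA_0(\mu\s)$ to ignore the anti-bubble contribution.
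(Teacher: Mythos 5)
Your overall framework matches the paper's: differentiate $b(t)$, split into a projection block and a virial block, use equation~\eqref{eq:eq-g-ham} for $\p_t \bs g$, exploit scale-invariance of $\E$ and antisymmetry of $J$ to kill the modulation terms paired against $\bs\be$, and let the $\s^{-k/2}$-derivative be the source of the main growth term $\frac{k\rho_k}{2\mu\s}\s^{k/2}b$. You also correctly flag the Pohozaev bound~\eqref{eq:A-pohozaev}, the potential-term cancellation against $\ang{\Lam_0 g\mid P_{\mu\s}g}$ via~\eqref{eq:vir-new}, and the localized coercivity as the spine of the virial block. But there is a genuine error in the cancellation bookkeeping that breaks the projection block.

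You claim the $\ang{\Lam_0\Lam Q_{\U{\mu\s}}\mid\dot g}$ pieces cancel between the Hamiltonian-coupling contribution $\frac{\xi(\s)}{\mu\rho_k\s^{k/2}}\ang{D^2\E\,\p_\s\bs U\mid\bs g}$ and the $\frac{\s'}{\rho_k\s^{k/2}}\ang{D^2\E\,\p_\s\bs U\mid\bs g}$ term from $\s'\ang{\p_\s\bs\be\mid\bs g}$. This cancellation would require $\s'+\xi(\s)/\mu$ to be negligible, i.e.\ $o(\|\bs g\|_\HH/\mu)$. But~\eqref{eq:dtsig} gives only $\mu\s'+\xi(\s) = -\ang{\La Q_{\U\la}\mid\dot g}/\|\La Q\|^2_{L^2}+o(1)\|\bs g\|_\HH$, and the first term is merely $O(\|\dot g\|_{L^2})$ with no smallness gain. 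Pairing this residual against the $-\frac{\rho_k\s^{k/2}}{\s}\ang{\Lam_0\Lam Q_{\U{\mu\s}}\mid\dot g}$ piece of~\eqref{eq:D2sig} leaves a critically-sized bilinear term $\frac{1}{\mu\s}\frac{\ang{\La Q_{\U{\mu\s}}\mid\dot g}\ang{\Lam_0\Lam Q_{\U{\mu\s}}\mid\dot g}}{\|\La Q\|^2_{L^2}}$ of indeterminate sign that does not fit your error bucket $c_0(\mu\s)^{-1}(|b|\s^{k/2}+\|\bs g\|^2_\HH)$. Only the $\ang{\La Q\mid\dot g}^2$ pieces cancel inside the projection block (via $\gamma_k = \frac{k}{2}\rho_k^2$); the $\ang{\Lam_0\Lam Q\mid\dot g}$ piece survives. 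Correspondingly, your virial block is missing its second key role: the term $\ang{\calA_0(\mu\s)\p_t g\mid\dot g}$ produces, through the $(\s'+\xi/\mu)\p_\s U$ modulation term, the critical piece $\left(\frac{\s'}{\s}+\frac{\xi(\s)}{\mu\s}\right)\ang{\calA_0(\mu\s)\Lam Q_{\mu\s}\mid\dot g}$, and it is precisely estimate~\eqref{eq:L0-A0-wm}, $\calA_0(\mu\s)\Lam Q_{\mu\s}\approx\Lam_0\Lam Q_{\U{\mu\s}}$, that cancels the surviving projection-block term. Declaring all the virial-block modulation terms absorbable into the error, as you do, misses this cancellation and leaves the estimate unachievable.
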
 

The main application of Proposition~\ref{p:b'} is the following corollary. 

\begin{cor} \label{c:T1} Suppose that $\bs u(t) \in \HH$ is a two-bubble in forward time. There exists $T_0>0$ with the following property. For every $\eps_1>0$ there exists $ T_1>0$ with $ T_1 \in [T_0, \infty)$ such that 
\EQ{
 \s(T_1)^{\frac{k}{2}} b(T_1) \ge - \eps_1 \| \bs g(T_1) \|_{\HH}^2
}
\end{cor}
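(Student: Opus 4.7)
The plan is to proceed by contradiction. Fix $\eps_1>0$ and assume the negation:
\[
 \s(t)^{k/2} b(t) \;<\; -\eps_1 \|\bs g(t)\|_{\HH}^2 \qquad \text{for all}\ t\in[T_0,\infty).
\]
If there were a time $t_0$ with $\bs g(t_0)=0$ then by definition of $b(t)$ we would also have $b(t_0)=0$, and the conclusion would hold at $T_1=t_0$; so we may assume $\bs g(t)\ne 0$, hence $b(t)<0$, for every $t\ge T_0$. Since Lemma~\ref{l:be-est} gives $|b(t)|\lesssim\|\bs g(t)\|_{\HH}$ once $\s$ is small, the standing hypothesis $\eps_1\|\bs g\|_{\HH}^2\leq|b|\s^{k/2}$ combined with $|b|\lesssim\|\bs g\|_{\HH}$ forces the crucial a priori bound
\[
 \|\bs g(t)\|_{\HH} \;\lesssim\; \eps_1^{-1}\,\s(t)^{k/2}.
\]
This quantitative decay of $\|\bs g\|_{\HH}$ in terms of $\s^{k/2}$ is what makes the various ``$o(1)$'' error terms below genuinely negligible.

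Substituting $\|\bs g\|_{\HH}^2\leq|b|\s^{k/2}/\eps_1$ and $|b|=-b$ into the differential inequality of Proposition~\ref{p:b'}, all three terms on the right-hand side collapse into a single multiple of $\s^{k/2}b/(\mu\s)$, with coefficient $\tfrac{k\rho_k}{2}-c_0\bigl(1+\tfrac{1}{\eps_1}\bigr)$. With $\eps_1$ already fixed, one now shrinks $c_0$ (enlarging $T_0$ as Proposition~\ref{p:b'} permits) so that this coefficient exceeds $\tfrac{k\rho_k}{4}$. Writing $y(t):=-b(t)>0$, the inequality becomes
\[
 y'(t) \;\ge\; \frac{k\rho_k}{4\mu(t)\s(t)}\,\s(t)^{k/2}\, y(t).
\]

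The next step identifies the right-hand coefficient as essentially $-\tfrac{k}{4}\s'/\s$. From~\eqref{eq:dtsig} and~\eqref{eq:xis} together with $\|\bs g\|_{\HH}\lesssim \eps_1^{-1}\s^{k/2}$, one obtains $\mu(t)\s'(t) = -\rho_k\s(t)^{k/2}(1+o(1))$ as $t\to\infty$; in particular $\s$ is strictly decreasing for $t$ large and may be used as an independent variable in place of $t$. Dividing the previous inequality by $-\s'(t)>0$ converts it to $(\log y)_\s\leq -(1-o(1))\,\tfrac{k}{4\s}$, and integrating from $\s(t)$ up to $\s_0:=\s(T_0)$ produces the lower bound
\[
 y(t)\;\ge\; y(T_0)\,\bigl(\s_0/\s(t)\bigr)^{k/4-o(1)},
\]
which diverges as $t\to\infty$ because $\s(t)\to 0$ by~\eqref{eq:g-sig-small}.

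The contradiction is then immediate: $|b(t)|=y(t)\to\infty$, whereas Lemma~\ref{l:be-est} together with $\|\bs g(t)\|_{\HH}\to 0$ gives $|b(t)|\lesssim\|\bs g(t)\|_{\HH}\to 0$. The delicate point is the second paragraph: one must absorb the error contribution $c_0(1+1/\eps_1)$ from Proposition~\ref{p:b'} into the leading constant $k\rho_k/2$. This is legal precisely because $c_0$ can be made arbitrarily small at the cost of taking $T_0$ larger, while $\eps_1$ is fixed once and for all at the outset of the contradiction argument; the order of quantifiers, rather than the sizes of the absolute constants, is what makes the proof go through.
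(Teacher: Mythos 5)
Your argument follows the paper's strategy almost exactly up to a point: argue by contradiction, assume $\s(t)^{\frac{k}{2}}b(t) < -\eps_1\|\bs g(t)\|_{\HH}^2$ for all $t$, deduce $b(t)<0$, and feed this back into Proposition~\ref{p:b'} to absorb the error terms into the leading one. Your computation of the collapsed coefficient $\tfrac{k\rho_k}{2} - c_0\big(1 + \tfrac{1}{\eps_1}\big)$ is correct, as is the observation that it can be made $\ge \tfrac{k\rho_k}{4}$ by taking $c_0$ small (i.e.\ $T_0$ large, after $\eps_1$ is fixed). At that point you have $b'(t)<0$ on $[T_0,\infty)$, which is precisely where the paper stops: since $b(T_0)<0$ and $b'<0$, one has $b(t)\le b(T_0)<0$ for all $t\ge T_0$, directly contradicting $b(t)\to 0$ (which follows from~\eqref{eq:best} together with~\eqref{eq:g-sig-small}).

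The detour you take instead --- integrating the inequality to conclude $|b(t)|\to\infty$ --- introduces a genuine gap. It requires $\mu\s' = -\rho_k\s^{k/2}(1+o(1))$, i.e.\ that $\s$ be strictly decreasing, so that $\s$ may be used as the integration variable. But under the standing hypothesis all you obtain is $\|\bs g\|_{\HH}\lesssim \eps_1^{-1}\s^{k/2}$, and combining~\eqref{eq:dtsig},~\eqref{eq:xis} and~\eqref{eq:best} the expansion actually reads
\EQ{
\mu\s' = -\rho_k\s^{k/2}(1+o(1)) - \frac{b}{\|\La Q\|_{L^2}^2} + o(1)\,O(\|\bs g\|_{\HH}).
}
Since $b<0$, the middle term is \emph{positive} and of order $\eps_1^{-1}\s^{k/2}$; for $\eps_1$ small this is comparable to or larger than $\rho_k\s^{k/2}$, so $\s'$ need not have a definite sign and the substitution breaks down. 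The fix is simply to delete the integration: the monotonicity $b'<0$ together with $b<0$ already contradicts $b\to 0$, exactly as in the paper's argument.
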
 
\begin{proof}[Proof of Corollary~\ref{c:T1} assuming Proposition~\ref{p:b'}]
Note that if $ \|\bs g(t)  \|_{\HH} = 0$ for any $t$, then we have $\bs u(t)  \equiv \bs U( \mu(t), \s(t))$ as claimed by Theorem~\ref{t:main} and there is nothing to do. So we may assume that $ \|\bs g(t)  \|_{\HH}  > 0$ for all $t$ for which Lemma~\ref{l:mod} applies (i.e., all sufficiently large $t>0$). 
Suppose Corollary~\ref{c:T1} fails. Fixing a sufficiently large $T_0'$ as in Lemma~\ref{l:mod} there  there exists $c_2 >0$ so that for all $t \ge T_0'$ we have  
\EQ{ \label{eq:bcont} 
 \s(t)^{\frac{k}{2}} b(t) \le  -c_2 \,   \| \bs g(t) \|_{\HH}^2
}
By Proposition~\ref{p:b'} we can choose $T_0 \ge T_0'$ sufficiently large in order to find a uniform constant $c_3 >0$ for which 
\EQ{
b'(t) \le - c_3 \frac{\| \bs g(t) \|_{\HH}^2}{\mu(t) \s(t)} \quad \forall \, \, t \in [T_0, \infty)
}
But this implies that $b'(t) <0$ on the entire interval $[T_0, \infty)$. By~\eqref{eq:bcont} we also have $b(t) <0$ for all $t \in [T_0, \infty)$. But these two conditions are impossible since we know that $b(t) \to 0 $ as $t \to \infty$. 
\end{proof} 

\begin{proof}[Proof of Proposition~\ref{p:b'}]
We compute 
\EQ{
b'(t)  &= \frac{\ud}{\ud t} \ang{   \bs \be(\mu(t), \s(t)) \mid \bs g(t)}  + \frac{\ud}{\ud t}\ang{  \calA_0(\mu(t) \s(t)) g(t) \mid \dot g(t)}  \\
& =   \left( \frac{\ud}{\ud t}\frac{1}{ \rho_k \s^{\frac{k}{2}}} \right) \ang{ D\E( \bs U(\mu, \s)) \mid \bs g(t) }   + \frac{1}{ \rho_k \s^{\frac{k}{2}}} \frac{\ud}{\ud t}\ang{ D\E( \bs U(\mu, \s)) \mid \bs g(t) }  \\
&\quad  + \frac{\ud}{\ud t}\ang{  \calA_0(\mu(t) \s(t)) g(t) \mid \dot g(t)}
}
The first leading term on the right-hand-side of~\eqref{eq:b'} comes from differentiating $\s^{-\frac{k}{2}}$ above. Indeed by~\eqref{eq:dtsig} we have 
\EQ{
  \left(  \frac{\ud}{\ud t} \frac{1}{ \rho_k \s^{\frac{k}{2}}} \right)\ang{ D\E( \bs U(\mu, \s)) \mid \bs g(t) } &= -\frac{k}{2} \frac{\s'}{\s}  \frac{1}{\rho_k \s^{\frac{k}{2}}}\ang{ D\E( \bs U(\mu, \s)) \mid \bs g(t) } = -\frac{k}{2}  \frac{\s'}{\s}  b(t) \\
 & = \frac{k}{2} \left( \frac{\xi( \s)}{\mu \s} + \frac{\ang{ \La Q_{\U{\mu \s}} \mid \dot g}}{ \mu \s \| \La Q \|_{L^2}^2 } \right)b(t) + o(1) O\left(  \frac{\| \bs g(t)\|_{\HH}^2}{ \mu \s}\right) 
 }
Using~\eqref{eq:xis} and~\eqref{eq:best}, and the fact that $\s(t) \to 0 \mas t \to \infty$ we conclude that 
\EQ{
 \left(  \frac{\ud}{\ud t} \frac{1}{ \rho_k \s^{\frac{k}{2}}} \right)\ang{ D\E( \bs U(\mu, \s)) \mid \bs g(t) } & =  \frac{k \rho_k}{2 \mu(t) \s(t)} \s(t)^{\frac{k}{2}} b(t)  +  \frac{k}{2} \frac{\ang{ \La Q_{\U{\mu \s}} \mid \dot g}^2}{ \mu(t) \s(t) \| \La Q \|_{L^2}^2 }   \\
 & \quad + \frac{1}{\mu(t) \s(t)} o \left( \abs{b(t)} \s(t)^{\frac{k}{2}} +  \| \bs g(t) \|_{\HH}^2\right)
}
Hence Proposition~\ref{p:b'} follows from the estimate 
\EQ{  \label{eq:rest} 
 \frac{k\ang{ \La Q_{\U{\mu \s}} \mid \dot g}^2}{2 \mu \s \| \La Q \|_{L^2}^2 }  +  \frac{1}{ \rho_k \s^{\frac{k}{2}}} \frac{\ud}{\ud t}\ang{ D\E( \bs U(\mu, \s)) \mid \bs g}   + \frac{\ud}{\ud t}\ang{  \calA_0(\mu \s) g \mid \dot g}    \le o(1) O\Big( \frac{\| \bs g \|_{\HH}^2}{\mu\s} \Big)
}
We begin the proof of~\eqref{eq:rest} by expanding the second term on the left above using~\eqref{eq:eq-g-ham}. 
\EQ{
 \frac{\ud}{\ud t}\ang{ D\E( \bs U(\mu, \s)) \mid \bs g} &=   \ang{ D^2 \E(\bs U(\mu, \s))  \p_t [\bs U( \mu, \s)] \mid \bs g}   +  \ang{ D\E( \bs U(\mu, \s)) \mid  \p_t \bs g} \\
& =   \ang{ D^2 \E(\bs U(\mu, \s)) [ \p_t \bs U( \mu, \s)] \mid \bs g} \\
& \quad  + \ang{ D\E( \bs U(\mu, \s)) \mid  \p_t \bs u}  - \ang{ D\E( \bs U(\mu, \s)) \mid  \p_t [\bs U(\mu, \s)]}  \\
&  =  \ang{ D^2 \E(\bs U(\mu, \s)) [ \p_t \bs U( \mu, \s)] \mid \bs g}  \\
&\quad   + \ang{ D\E( \bs U(\mu, \s)) \mid  J \circ D\E(\bs U(\mu, \s) + \bs g) }  \\
& \quad  - \mu' \ang{ D\E( \bs U(\mu, \s)) \mid  \p_\mu \bs U(\mu, \s)}    -  \s' \ang{ D\E( \bs U(\mu, \s)) \mid  \p_\s \bs U(\mu, \s)}  
}
The fact that $\E( \bs U(\mu, \s))$ is constant in $\mu, \s$ implies the last two lines above $\equiv 0$ since 
\EQ{
 0 = \frac{\ud}{\ud \mu} \E( \bs U(\mu, \s)) =\ang{ D\E (\bs U(\mu, \s) \mid \p_\mu \bs U(\mu, \s)} \\
  0 = \frac{\ud}{\ud \s} \E( \bs U(\mu, \s)) = \ang{ D\E (\bs U(\mu, \s) \mid \p_\mu \bs U(\mu, \s)} 
}
Then, subtracting $ 0 = \ang{ D\E( \bs U(\mu, \s)) \mid J \circ D\E \bs U(\mu, \s)} $ we obtain 
\EQ{ \label{eq:dtEUg1} 
 \frac{\ud}{\ud t}\ang{ D\E( \bs U(\mu, \s)) \mid \bs g} &=\ang{ D^2 \E(\bs U(\mu, \s)) [ \p_t \bs U( \mu, \s)] \mid \bs g}  \\
& \quad  + \ang{ D\E( \bs U(\mu, \s)) \mid  J \circ \big[D\E(\bs U(\mu, \s) + \bs g)  - D \E(\bs U(\mu, \s)) \big]}
}
Next, we re-write the first term above as follows. Recall that by~\eqref{eq:psU} we have 
\EQ{
\p_t \bs U(\mu, \s) &= \mu' \p_\mu \bs U(\mu, \s) + \s' \p_\s \bs U (\mu, \s)  \\
&  = \mu' \p_\mu \bs U(\mu, \s) + \left(\s' + \frac{\xi(\s)}{\mu}  \right) \p_\s \bs U (\mu, \s)   + J \circ D \E(\bs U(\mu, \s))
}
Hence, using also the self-adjointness of $D^2 \E(\bs U(\mu, \s))$  and the skew-symmetry of $J$  we have 
\EQ{
\ang{ D^2 \E(\bs U(\mu, \s)) [ \p_t \bs U( \mu, \s)] \mid \bs g} & = \ang{ D^2 \E(\bs U(\mu, \s)) [ \p_t \bs U( \mu, \s) - J \circ D\E(\bs U(\mu, \s))] \mid \bs g} \\
& \quad + \ang{ D^2 \E(\bs U(\mu, \s)) J \circ D\E(\bs U(\mu, \s))\mid \bs g} \\
& =  \mu' \ang{ D^2 \E(\bs U(\mu, \s))\p_\mu \bs U(\mu, \s) \mid \bs g}  \\
&\quad + \left(\s' + \frac{\xi(\s)}{\mu}  \right) \ang{ D^2 \E(\bs U(\mu, \s) \p_\s \bs U (\mu, \s)\mid \bs g} \\
& \quad - \ang{  D\E(\bs U(\mu, \s))\mid J \circ D^2 \E(\bs U(\mu, \s))\bs g}
}
Inserting this back into~\eqref{eq:dtEUg1} we obtain
\begin{multline} 
 \frac{\ud}{\ud t}\ang{ D\E( \bs U(\mu, \s)) \mid \bs g}  \\
= \ang{ D\E( \bs U(\mu, \s)) \mid  J \circ \big[D\E(\bs U(\mu, \s) + \bs g)  - D \E(\bs U(\mu, \s)) - D^2\E(\bs U(\mu, \s) \bs g \big]} \\
  +  \mu' \ang{ D^2 \E(\bs U(\mu, \s))\p_\mu \bs U(\mu, \s) \mid \bs g}  
+ \left(\s' + \frac{\xi(\s)}{\mu}  \right) \ang{ D^2 \E(\bs U(\mu, \s) \p_\s \bs U (\mu, \s)\mid \bs g}  
\end{multline} 
Finally multiplying by~$ \s^{-\frac{k}{2}} \rho_k^{-1}$ and preparing for an application of~\eqref{eq:beest2} we obtain
\EQ{ \label{eq:recap1} 
&\frac{1}{ \rho_k \s^{\frac{k}{2}}}  \frac{\ud}{\ud t}\ang{ D\E( \bs U(\mu, \s)) \mid \bs g}  \\
&  =  \ang{ \bs \be(\mu, \s)  - (0, \La Q_{\U{\mu \s}}) \mid  J \circ \big[D\E(\bs U(\mu, \s) + \bs g)  - D \E(\bs U(\mu, \s)) - D^2\E(\bs U(\mu, \s)) \bs g \big]} \\ 
&\quad + \ang{ (0,  \La Q_{\U{\mu \s}}) \mid  J \circ \big[D\E(\bs U(\mu, \s) + \bs g)  - D \E(\bs U(\mu, \s)) - D^2\E(\bs U(\mu, \s) \bs g \big]} \\
 &\quad + \frac{1}{ \rho_k \s^{\frac{k}{2}}}\mu' \ang{ D^2 \E(\bs U(\mu, \s))\p_\mu \bs U(\mu, \s) \mid \bs g}   \\
 &\quad  + \frac{1}{ \rho_k \s^{\frac{k}{2}}}\left(\s' + \frac{\xi(\s)}{\mu}  \right) \ang{ D^2 \E(\bs U(\mu, \s) \p_\s \bs U (\mu, \s)\mid \bs g} 
} 
Consider the first line in~\eqref{eq:recap1}. Using~\eqref{eq:beest2} we have, 
\begin{multline} 
\Big| \ang{ \bs \be(\mu, \s)  - (0, \La Q_{\U{\mu \s}}) \mid  J \circ \big[D\E(\bs U(\mu, \s) + \bs g)  - D \E(\bs U(\mu, \s)) - D^2\E(\bs U(\mu, \s)) \bs g \big]}\Big|  \\
\lesssim o(1) \frac{ \| \bs  g \|_{\HH}^2}{ \mu \s}
\end{multline} 
Next, consider the second term on the right in~\eqref{eq:recap1}. Note that, 
\EQ{ \label{eq:crit-term} 
 \ang{ (0,  \La Q_{\U{\mu \s}}) \mid  J \circ \big[D\E(\bs U(\mu, \s) + \bs g)  - D \E(\bs U(\mu, \s)) - D^2\E(\bs U(\mu, \s) \bs g \big]} \\
  = - \frac{1}{\mu \s} \ang{ \Lam Q_{\mu \s} \mid r^{-2} \big( f( U(\mu, \s) + g) - f( U(\mu, \s)) - f'( U( \mu, \s)) g \big)}
}
We write, 
\EQ{
f( U(\mu, \s) + g) - f( U(\mu, \s)) &- f'( U( \mu, \s)) g  \\
&\quad = \frac{1}{2} f''( Q_{\mu \s}) g^2 + \frac{1}{2} \big(f''(U(\mu, \s) ) - f''(Q_{\mu \s}) \big)g^2  + O( \abs{g}^3) 
}
One can readily show using $U(\mu, \s) = \Phi(\mu, \s) + w_c( \mu, \s)$, the definition of $\Phi(\mu, \s)$ and the estimates~\eqref{eq:xis},~\eqref{eq:muc-lim},~\eqref{eq:ac-lim} and ~\eqref{eq:wc-mus-H} that the last two terms above contribute negligible errors, i.e., errors of size $o(1) (\mu \s)^{-1} \| g \|_{H}^2$. Hence, 
\begin{multline}
- \frac{1}{\mu \s} \ang{ \Lam Q_{\mu \s} \mid r^{-2} \big( f( U(\mu, \s) + g) - f( U(\mu, \s)) - f'( U( \mu, \s)) g \big)} \\
= - \frac{1}{\mu \s} \frac{1}{2} \ang{ \Lam Q_{\mu \s}  \mid r^{-2} f''( Q_{\mu \s}) g^2} + o(1) O\Big( \frac{ \| g \|_H^2}{\mu \s} \Big)
\end{multline} 
We integrate by parts in the first term as follows, 
\EQ{
-\frac{1}{2} \ang{ \Lam Q_{\mu \s}  \mid r^{-2} f''( Q_{\mu \s}) g^2} &=- \frac{1}{2} \int_0^\infty  \p_r( f'(Q_{\mu \s}) - k^2) g^2 \, \ud r  \\
&\quad  =  \int_0^\infty  r^{-2} ( f'(Q_{\mu \s}) - k^2) g \Lam g  \,r \,  \ud r =  \ang{ \Lam g \mid P_{\mu \s} g}   \\
&\quad =  \ang{ \Lam_0 g \mid P_{\mu \s} g} - \ang{ g \mid P_{\mu \s} g} 
}
where $P_{\mu \s}$ is as in~\eqref{eq:P-def}. Plugging all of this back into~\eqref{eq:crit-term} we have show that  
\EQ{
\ang{ (0,  \La Q_{\U{\mu \s}}) \mid  J \circ \big[D\E(\bs U(\mu, \s) + \bs g)  - D \E(\bs U(\mu, \s)) - D^2\E(\bs U(\mu, \s) \bs g \big]}  \\
 = \frac{1}{\mu \s} \ang{ \Lam_0 g \mid P_{\mu \s} g} - \frac{1}{\mu \s} \ang{ g \mid P_{\mu \s} g} +  o(1) O\Big( \frac{ \| g \|_H^2}{\mu \s} \Big)
}
Next, applying the estimates~\eqref{eq:dtmu} and~\eqref{eq:D2mu} we have, 
\EQ{
\Big|  \frac{\mu'}{ \rho_k \s^{\frac{k}{2}}} \ang{ D^2 \E(\bs U(\mu, \s))\p_\mu \bs U(\mu, \s) \mid \bs g} \Big| \lesssim o(1)  \frac{\| \bs g \|_{\HH}^2}{\mu \s} 
}
which takes care of the third term on the right-hand side of~\eqref{eq:recap1}. Next, consider the last line of~\eqref{eq:recap1}. Using the estimate~\eqref{eq:D2sig} followed by~\eqref{eq:dtsig} we have, 
\EQ{
\frac{1}{ \rho_k \s^{\frac{k}{2}}}&\left(\s' + \frac{\xi(\s)}{\mu}  \right) \ang{ D^2 \E(\bs U(\mu, \s) \p_\s \bs U (\mu, \s)\mid \bs g} = \frac{1}{ \rho_k \s^{\frac{k}{2}}}\left(\s' + \frac{\xi(\s)}{\mu}  \right)\gamma_k \frac{\s^{\frac{k}{2}}}{  \rho_k \s} \ang{\Lam Q_{\U{\mu \s}}  \mid \dot g} \\
&\quad -\frac{1}{ \rho_k \s^{\frac{k}{2}}}\left(\s' + \frac{\xi(\s)}{\mu}  \right) \frac{ \rho_k \s^{\frac{k}{2}}}{ \s} \ang{ \Lam_0 \Lam Q_{\U{\mu \s}} \mid \dot g}   + o(1)O( \frac{\s^{\frac{k}{2}}}{\s} \|  \bs g \|_{\HH})\frac{1}{ \rho_k \s^{\frac{k}{2}}}\left(\s' + \frac{\xi(\s)}{\mu}  \right) \\
& =  - \frac{1}{\mu \s} \frac{ k}{2}  \frac{ \ang{ \Lam Q_{\U{\mu \s}} \mid \dot g }^2 }{\| \La Q \|_{L^2}^2}   -\left(\frac{\s'}{\s} + \frac{\xi(\s)}{\mu \s}  \right) \ang{ \Lam_0 \Lam Q_{\U{\mu \s}} \mid \dot g}  + o(1) O \Big( \frac{ \| \bs g \|_{\HH}^2}{\mu \s} \Big)
}
where in the last line we used~\eqref{eq:q-rho-ga}, i.e., $\gamma_k =  \rho_k^2 \frac{k}{2}$. To recap, by inserting the previous three estimates into~\eqref{eq:recap1}  we have now shown that, 
\EQ{ \label{eq:recap2} 
\frac{1}{ \rho_k \s^{\frac{k}{2}}} & \frac{\ud}{\ud t}\ang{ D\E( \bs U(\mu, \s)) \mid \bs g}  = - \frac{1}{\mu \s} \frac{ k}{2}   \frac{ \ang{ \Lam Q_{\U{\mu \s}} \mid \dot g }^2 }{\| \La Q \|_{L^2}^2}  \\
&   -\left(\frac{\s'}{\s} + \frac{\xi(\s)}{\mu \s}  \right) \ang{ \Lam_0 \Lam Q_{\U{\mu \s}} \mid \dot g}   +  \frac{1}{\mu \s} \ang{ \Lam_0 g \mid P_{\mu \s} g} - \frac{1}{\mu \s} \ang{ g \mid P_{\mu \s} g}\\
& + o(1) O \Big( \frac{\| \bs g \|_{\HH}^2}{\mu \s} \Big) 
}
Note that the first term on the right above exactly cancels the first term on the left of~\eqref{eq:rest}. The terms on the second line are of critical size, and we now show that the differentiated virial correction will cancel these terms up to admissible errors and a coercive term. Indeed, we claim the estimate, 
\EQ{ \label{eq:virial-mus} 
\frac{\ud}{\ud t}\ang{  \calA_0(\mu \s) g \mid \dot g}  &+ \frac{1}{\mu \s} \ang{ \Lam_0 g \mid P_{\mu \s} g}- \Big( \frac{\s'}{\s} + \frac{\xi(\s)}{\mu \s} \Big)  \ang{ \calA_0(\mu \s)\Lam Q_{\mu \s}  \mid \dot g} \\
& \le  c_0 \frac{ \| g \|_H^2}{ \mu \s}  - \frac{1}{\mu \s} \int_0^{R \mu \s} \Big(  ( \p_r g)^2 + k^2 \frac{g^2}{r^2} \Big) \, r \, \ud r 
}
where, $c_0>0$, $R>0$ are as in Lemma~\ref{l:opA} and $c_0>0$ can be taken as small as we like independent of $\mu \s$, and $P_{\mu \s}$ is as in~\eqref{eq:P-def}. To see this, we expand the derivative of the virial correction as follows. Using the notation, $\la = \mu \s$ we have, 
\EQ{ \label{eq:virial-mus1} 
\frac{\ud}{\ud t}\ang{  \calA_0(\mu \s) g \mid \dot g} = (\frac{ \mu'}{\mu} + \frac{ \s'}{\s}) \ang{ [\lam\p_\la \calA_0](\lam)  g \mid  \dot g} + \ang{  \calA_0(\mu \s) \p_t g \mid \dot g} + \ang{  \calA_0(\mu \s) g \mid \p_t  \dot g}
} 
Note that the first term on the right above contributes an admissible error. Indeed, using the estimates~\eqref{eq:dtmu},~\eqref{eq:dtsig},~\eqref{eq:xis}, and the first bullet point in Lemma~\ref{l:opA} we have, 
\EQ{
\abs{(\frac{ \mu'}{\mu} + \frac{ \s'}{\s}) \ang{ [\lam\p_\la \calA_0](\lam)  g \mid  \dot g}}  & \lesssim \abs{\frac{ \mu'}{\mu} + \frac{ \s'}{\s}} \| \bs g \|_{\HH}^2  \lesssim ( \s^{\frac{k}{2}} +  \| \bs g \|_{\HH}) \frac{ \| \bs g \|_{\HH}^2}{\mu \s}
}
Next, we expand the second two terms on the right of~\eqref{eq:virial-mus1} using the equation satisfied by $\bs g$ in~\eqref{eq:eq-g}. For the second term we have, 
\EQ{ \label{eq:dt-g1} 
 \ang{  \calA_0(\mu \s) \p_t g \mid \dot g} &=  \ang{  \calA_0(\mu \s)  \dot g  \mid \dot g}  - \mu'   \ang{  \calA_0(\mu \s)\p_\mu U( \mu, \s) \mid \dot g}   \\
 &\quad -  ( \s' + \frac{\xi(\s)}{\mu})  \ang{  \calA_0(\mu \s) \p_\s U(\mu, \s) \mid \dot g}
}
Since $\calA_0(\mu \s) $ is antisymmetric, we have, 
$
 \ang{  \calA_0(\mu \s)  \dot g  \mid \dot g} = 0. 
$
For the second term on the right above we use~\eqref{eq:pmuU}  and the first bullet point in Lemma~\ref{l:opA} to deduce that 
\EQ{
\abs{ \mu'   \ang{  \calA_0(\mu \s)\p_\mu U( \mu, \s) \mid \dot g}  }  
 \lesssim \frac{\abs{\mu'}}{\mu} ( \| \Lam \Phi(\mu ,\s) \|_{H} + \| \Lam w_c (\mu, \s) \|_{H})  \| \dot g \|_{L^2} \lesssim \frac{ \| \bs g \|_{\HH}^2}{\mu}  \lesssim o(1) \frac{ \| \bs g \|_{\HH}^2}{\mu \s}
}
where the last inequality follows from~\eqref{eq:wc-mus-LamH} and~\eqref{eq:dtmu}. Next, we treat the last term in~\eqref{eq:dt-g1}. Using~\eqref{eq:psU} we write, 
\EQ{
-  ( \s' &+ \frac{\xi(\s)}{\mu})  \ang{  \calA_0(\mu \s) \p_\s U(\mu, \s) \mid \dot g} = \Big( \frac{\s'}{\s} + \frac{\xi(\s)}{\mu \s} \Big)  \ang{ \calA_0(\mu \s) \frac{\mu \s}{\xi(\s)} \dot U(\mu, \s) \mid \dot g} \\
&  = \Big( \frac{\s'}{\s} + \frac{\xi(\s)}{\mu \s} \Big)  \ang{ \calA_0(\mu \s)\Lam Q_{\mu \s}  \mid \dot g}    + ( \mu \s'  + \xi(\s) )  \ang{ \calA_0(\mu \s) \Big(\frac{b_c(\la_c^{-1}(\s))}{\xi(\s)} - 1 \Big) \Lam Q_{\U{\mu \s}} \mid \dot g} \\
&\quad + ( \mu \s'  + \xi(\s) )  \ang{ \calA_0(\mu \s) \frac{1}{\xi(\s)}  \Big( \dot \Phi(\mu, \s) - b_c(\lam_c^{-1}(\s) )  \Lam Q_{\U{\mu \s}} \Big) \mid \dot g} \\
&\quad +( \mu \s'  + \xi(\s) )   \ang{ \calA_0(\mu \s) \frac{1}{\xi(\s)} \dot w_c(\mu, \s) \mid \dot g}  \\
& =   \Big( \frac{\s'}{\s} + \frac{\xi(\s)}{\mu \s} \Big)  \ang{ \calA_0(\mu \s)\Lam Q_{\mu \s}  \mid \dot g}  + o(1) O\Big( \frac{\| \bs g \|_{\HH}^2}{\mu \s} \Big) 
}
where the last line follows from the first bullet point in~Lemma~\ref{l:opA} with ~\eqref{eq:xis},~\eqref{eq:muc-lim},~\eqref{eq:ac-lim}, and~\eqref{eq:dotPhi-bLamQ-H}, and  finally~\eqref{eq:wc-mus-H2}. Plugging the previous three estimates back into~\eqref{eq:dt-g1} we obtain, 
\EQ{
 \ang{  \calA_0(\mu \s) \p_t g \mid \dot g} &= \Big( \frac{\s'}{\s} + \frac{\xi(\s)}{\mu \s} \Big)  \ang{ \calA_0(\mu \s)\Lam Q_{\mu \s}  \mid \dot g}  + o(1) O\Big( \frac{\| \bs g \|_{\HH}^2}{\mu \s} \Big) 
}
Lastly, we expand the term in~\eqref{eq:virial-mus1} involving $\p_t \dot g$ using~\eqref{eq:eq-g}. Preparing for a near identical argument to the one used to treat the virial correction in the companion paper~\cite[Proof of Lemma~4.6]{JL2-regularity} we  write, 
\EQ{ \label{eq:dt-dotg1} 
\ang{  \calA_0(\mu \s) g \mid \p_t  \dot g} & = \ang{  \calA_0(\mu \s) g \mid (- \LL_{\mu \s}) g}  \\
&\quad -  \ang{  \calA_0(\mu \s) g \mid r^{-2} \Big(  f( U(\mu, \s) + g) - f( U(\mu, \s)) - f'(Q_{\mu \s}) g \Big)}  \\
& \quad - \mu'  \ang{  \calA_0(\mu \s) g \mid \dot{ \p_\mu U}(\mu, \s)}  - ( \s' + \frac{\xi(\s)}{\mu}) \ang{  \calA_0(\mu \s) g \mid \dot{\p_\s U}(\mu, \s)}  
}
For the first term on the right of~\eqref{eq:dt-dotg1} we recall the notation $\LL_{\mu \s} = \LL_0 + P_{\mu \s}$ and write, 
\EQ{
 \ang{  \calA_0(\mu \s) g \mid (- \LL_{\mu \s}) g}  = -  \ang{  \calA_0(\mu \s) g \mid  \LL_0 g}  -  \ang{  \calA_0(\mu \s) g \mid P_{\mu \s} g} 
}
It then follows from~\eqref{eq:A-pohozaev} from Lemma~\ref{eq:opA}  along with the estimate~\eqref{eq:vir-new} (to treat the second term above) that, 
\EQ{
 \ang{  \calA_0(\mu \s) g \mid (- \LL_{\mu \s}) g}   \le c_0 \frac{ \| g \|_H^2}{ \mu \s}  - \frac{1}{\mu \s} \int_0^{R \mu \s} \Big(  ( \p_r g)^2 + k^2 \frac{g^2}{r^2} \Big) \, r \, \ud r   - \frac{1}{\mu \s} \ang{  \Lam_0 g \mid P_{\mu \s} g} 
}
Note that  $c_0>0$, $R>0$ are as in Lemma~\ref{l:opA} and $c_0>0$ can be taken as small as we like independent of $\mu \s$. Next, we estimate the second term on the right of~\eqref{eq:dt-dotg1} via an analysis nearly identical to the one used to estimate the second term in~\cite[Eqn. (4.30)]{JL2-regularity}. The difference is that here we can only make use of the $\HH$ regularity of $g$. First, note that by Lemma~\ref{l:opA} we have, 
\EQ{
\Big| &\ang{  \calA_0(\mu \s) g \mid r^{-2} \Big(  f( U(\mu, \s) + g) - f( U(\mu, \s)) - f'(Q_{\mu \s}) g \Big)}  \Big|  \\
& \qquad \qquad \lesssim  \| g \|_H  \| r^{-2} \big(  f( U(\mu, \s) + g) - f( U(\mu, \s)) - f'(Q_{\mu \s}) g \big) \|_{L^2}
}
Hence it suffices to establish the estimate, 
\EQ{ \label{eq:r2f''} 
\| r^{-2} \big(  f( U(\mu, \s) + g) - f( U(\mu, \s)) - f'(Q_{\mu \s}) g \big) \|_{L^2} & \lesssim o(1)  \frac{ \| g \|_{H}}{\mu \s} 
}
To see this, we write,  
\EQ{
f( U(\mu, \s) &+ g) - f( U(\mu, \s)) - f'(Q_{\mu \s}) g  \\
&= f( \Phi(\mu, \s)+ w_c(\mu, \s) + g) - f(\Phi(\mu, \s)+ w_c(\mu, \s)) - f'( \Phi(\mu, \s) + w_c(\mu, s)) g \\
& \quad+  \Big( f'( \Phi(\mu, \s) + w_c(\mu, s))  - f'( \Phi(\mu, \s) )\Big) g  + \Big( f'( \Phi) - f'(Q_{\mu \s}) \Big) g
}
The contribution of the first line is handled using the pointwise estimate, 
\EQ{
\frac{1}{r^2} \Big|f( \Phi(\mu, \s)+ w_c(\mu, \s) + g) - f(\Phi(\mu, \s)+ w_c(\mu, \s)) - f'( \Phi(\mu, \s) + w_c(\mu, s)) g \Big| \lesssim \frac{1}{\s\mu} r^{-1} g^2 
}
which follows from the definition of $\Phi(\mu, \s)$,~\eqref{eq:xis}~\eqref{eq:muc-lim},~\eqref{eq:ac-lim} and~\eqref{eq:wc-mus-H}. For the second term we use the pointwise estimate, 
\EQ{
r^{-2} \abs{ \Big( f'( \Phi(\mu, \s) + w_c(\mu, s))  - f'( \Phi(\mu, \s) )\Big) g } \lesssim \frac{1}{\mu \s}  r^{-1}  \abs{ w_c( \mu, \s)}\abs{g} 
}
together with~\eqref{eq:wc-mus-H}. Finally, to treat the last term we note the pointwise estimate, 
\EQ{
r^{-2} \abs{f'( \Phi) - f'(Q_{\mu \s})} \lesssim \frac{1}{\mu} r^{-1}  \lesssim o(1) \frac{1}{ \mu \s} r^{-1} 
}
This is sufficient to prove~\eqref{eq:r2f''}. Next, we use~\eqref{eq:dtmu} and~\eqref{eq:pmuU} to estimate, 
\EQ{
\abs{ \mu'  \ang{  \calA_0(\mu \s) g \mid  \dot{\p_\mu  U}(\mu, \s)} } \lesssim  \frac{ \| \bs g \|_{H}^2}{\mu}  \Big( \| \Lam_0 \dot \Phi(\mu, \s) \|_{L^2} + \| \Lam_0 w_c(\mu, \s) \|_{L^2} \Big)  \lesssim o(1)  \frac{ \| \bs g \|_{\HH}^2}{\mu \s} 
}
where in the last inequality we used~\eqref{eq:wc-mus-H2}. Lastly, using the first bullet point in Lemma~\ref{l:opA}~\eqref{eq:dtsig} and~\eqref{eq:psU} we have, 
\EQ{
\abs{( \s' + \frac{\xi(\s)}{\mu}) \ang{  \calA_0(\mu \s) g \mid  \dot{\p_\s  U}(\mu, \s)}} &\lesssim  \frac{1}{\mu}  \|  \bs g \|_{\HH}^2  \frac{\mu}{\xi(\s)}  \|   \De U(\mu, \s) -\frac{1}{r^2} f(U(\mu, \s)) \|_{L^2}  \\
& \lesssim  \s^{\frac{k}{2}} \frac{ \| \bs g \|_{\HH}^2}{\mu \s}
}
where the last inequality is by~\eqref{eq:eqU-L2} and~\eqref{eq:xis}. This completes the proof of~\eqref{eq:virial-mus}. 

We can now complete the proof of~\eqref{eq:rest}. Combining the estimates~\eqref{eq:recap2} and~\eqref{eq:virial-mus} we have, 
\EQ{
 \frac{k\ang{ \La Q_{\U{\mu \s}} \mid \dot g}^2}{2 \mu \s \| \La Q \|_{L^2}^2 } & +  \frac{1}{ \rho_k \s^{\frac{k}{2}}} \frac{\ud}{\ud t}\ang{ D\E( \bs U(\mu, \s)) \mid \bs g}   + \frac{\ud}{\ud t}\ang{  \calA_0(\mu \s) g \mid \dot g}  \\
 & \le   c_0 \frac{ \| \bs g \|_{\HH}^2}{ \mu \s}  +  \left(\frac{\s'}{\s} + \frac{\xi(\s)}{\mu \s}  \right)  \Big( \ang{ \calA_0(\mu \s)\Lam Q_{\mu \s}  \mid \dot g}-  \ang{ \Lam_0 \Lam Q_{\U{\mu \s}} \mid \dot g}  \Big)   \\
&  - \frac{1}{\mu \s} \int_0^{R \mu \s} \Big(  ( \p_r g)^2 + k^2 \frac{g^2}{r^2} \Big) \, r \, \ud r - \frac{1}{\mu \s} \ang{ g \mid P_{\mu \s} g} \\
}
Using the estimate~\eqref{eq:L0-A0-wm} along with~\eqref{eq:dtsig} we have, 
\EQ{
\Big| \left(\frac{\s'}{\s} + \frac{\xi(\s)}{\mu \s}  \right)  \ang{ (\calA_0(\mu \s)\Lam Q_{\mu \s}- \Lam_0 \Lam Q_{\U{\mu \s}} )  \mid \dot g} \Big| & \le  \frac{1}{\mu \s} \| \calA_0(\mu \s)\Lam Q_{\mu \s} - \Lam_0 \Lam Q_{\U{\mu \s}} \|_{L^2}  \| \bs g \|_{\HH}^2  \\
& \le c_0 \frac{\| \bs g \|_{\HH}^2}{\mu \s} 
}
Finally, the localized coercivity estimate~\eqref{eq:L-loc1} from Lemma~\ref{l:loc-coerce} yields, 
\EQ{
- \frac{1}{\mu \s} \int_0^{R \mu \s} \Big(  ( \p_r g)^2 + k^2 \frac{g^2}{r^2} \Big) \, r \, \ud r - \frac{1}{\mu \s} \ang{ g \mid P_{\mu \s} g}  \le  c_0 \frac{\| \bs g \|_{\HH}^2}{\mu \s} 
}
by taking $R>0$ large enough. We conclude that, 
\EQ{
 \frac{k\ang{ \La Q_{\U{\mu \s}} \mid \dot g}^2}{2 \mu \s \| \La Q \|_{L^2}^2 }  +  \frac{1}{ \rho_k \s^{\frac{k}{2}}} \frac{\ud}{\ud t}\ang{ D\E( \bs U(\mu, \s)) \mid \bs g}   + \frac{\ud}{\ud t}\ang{  \calA_0(\mu \s) g \mid \dot g} 
  \le   c_0 \frac{ \| \bs g \|_{\HH}^2}{ \mu \s}  
}
where $c_0$ is a constant that can be taken arbitrarily small, independently of $\mu, \s$. This proves~\eqref{eq:rest} and completes the proof of the proposition. 
\end{proof} 

\subsection{The proof of Theorem~\ref{t:main}} 
We put the finishing touches on the the proof of Theorem~\ref{t:main}. 
\begin{proof} 
We pick up where we left off in Section~\ref{s:outline}. Let $\bs u(t) \in \HH$ be any forward-in-time $2$-bubble solution to~\eqref{eq:wmk}  on the time interval $[T_0, \infty)$ where  $T_0>0$ is chosen sufficiently large so that Corollary~\ref{c:T1} holds, as well as~\eqref{eq:enexp1}. Assume for contradiction  that $  \| \bs g(t)  \|_{\HH} >0$ for all $t \ge T_0$.  So on the one hand, by~\eqref{eq:enexp1} we have, 
\EQ{ \label{eq:enexp3} 
0 \ge \ang{ D\E( \bs U(\mu(t), \s(t))) \mid \bs g(t)} + \frac{1}{2} c_1 \| \bs g(t) \|_{\HH}^2. 
}
for all $t  \in [T_0, \infty)$ for a uniform constant $c_1>0$. One the other hand, by Corollary~\ref{c:T1}, the definition of $b(t)$ in~\eqref{eq:bdef}, and by possibly taking $T_0$ larger so that $\s(t)$ is sufficiently small, we can find $T_1 \ge T_0$ so that, 
\EQ{
 \ang{ D\E( \bs U(\mu(T_1), \s(T_1))) \mid \bs g(T_1)} \ge -\frac{c_1}{4}  \| \bs g(T_1) \|_{\HH}^2 
 }
 which yields a contradiction in~\eqref{eq:enexp3} at time $T_1$. Thus, there exists $T \ge T_0$ for which $\|\bs g(T) \|_{\HH}  = 0$. But this means that, 
 \EQ{
 \bs u(T)  = \bs U( \mu(T), \s(T)) = ( u_c( \lam_c^{-1}(\s(T)),  \cdot/ \mu(T)), \mu(T)^{-1} \p_t u_c( \lam_c^{-1}(\s(T)),  \cdot/ \mu(T))), 
 }
i.e. $\bs u(t)$ agrees with $\bs u_c(t)$ up to a fixed time translation and rescaling. This completes the proof. 
\end{proof} 

\bibliographystyle{plain}
\bibliography{researchbib}

\bigskip
\centerline{\scshape Jacek Jendrej}
\smallskip
{\footnotesize
 \centerline{CNRS and LAGA, Université Sorbonne Paris Nord}
\centerline{99 av Jean-Baptiste Cl\'ement, 93430 Villetaneuse, France}
\centerline{\email{jendrej@math.univ-paris13.fr}}
} 
\medskip 
\centerline{\scshape Andrew Lawrie}
\smallskip
{\footnotesize
 \centerline{Department of Mathematics, Massachusetts Institute of Technology}
\centerline{77 Massachusetts Ave, 2-267, Cambridge, MA 02139, U.S.A.}
\centerline{\email{alawrie@mit.edu}}
}

\end{document}